\newcommand{\R}{\mathbb{R}}
\newcommand{\dx}{\operatorname{\,d\hspace{-0.4ex}}x}
\newcommand{\dt}{\operatorname{\,d\hspace{-0.4ex}}t}
\newcommand{\ds}{\operatorname{\,d\hspace{-0.4ex}}s}
\newcommand{\eps}{\varepsilon}
\newcommand{\quotes}[1]{``#1''}
\definecolor{dark-green}{rgb}{0.0,0.4,0.0}
\definecolor{pantone_369}{rgb}{0.4784, 0.7098, 0.0863}
\definecolor{lila}{rgb}{0.5,0.0,0.5}
\newtheorem{theorem}{Theorem}[section]
\newtheorem{corollary}[theorem]{Corollary}
\newtheorem{lemma}[theorem]{Lemma}
\newtheorem{proposition}[theorem]{Proposition}
\theoremstyle{definition}
\newtheorem{definition}[theorem]{Definition}
\newtheorem{remark}[theorem]{Remark}
\newenvironment{fshaded}{%
\MakeFramed {\FrameRestore}}%
{\endMakeFramed}
\newtheorem{cstep}{Step}
\begin{document}

\begin{center}
{\LARGE Sobolev gradient flow for the Gross-Pitaevskii eigenvalue problem: global convergence and computational efficiency
\renewcommand{\thefootnote}{\fnsymbol{footnote}}\setcounter{footnote}{0}
 \hspace{-3pt}\footnote{P. Henning acknowledges funding by the Swedish Research Council (grant 2016-03339) and the G\"oran Gustafsson foundation and D.~Peterseim acknowledges support by the German Research Foundation DFG in the Priority Program 1748 ``Reliable simulation techniques in solid mechanics'' (PE2143/2-2). Parts of this paper were written while the authors enjoyed the kind hospitality of the Fields Institute in Toronto.}}\\[2em]
\end{center}

\renewcommand{\thefootnote}{\fnsymbol{footnote}}
\renewcommand{\thefootnote}{\arabic{footnote}}

\begin{center}
{\large Patrick Henning\footnote[1]{Department of Mathematics, KTH Royal Institute of Technology, SE-100 44 Stockholm, Sweden.} and 
Daniel Peterseim\footnote[2]{Institut f\"ur Mathematik, Universit\"at Augsburg, Universit\"atsstr. 14, DE-86159 Augsburg, Germany}}\\[2em]
\end{center}


\begin{center}
{\large{\today}}
\end{center}

\begin{center}
\end{center}

\begin{abstract}
We propose a new normalized Sobolev gradient flow for the Gross-Pitaevskii eigenvalue problem based on an energy inner product that depends on time through the density of the flow itself. The gradient flow is well-defined and converges to an eigenfunction. For ground states we can quantify the convergence speed as exponentially fast where the rate depends on spectral gaps of a linearized operator. The forward Euler time discretization of the flow yields a numerical method which generalizes the inverse iteration for the nonlinear eigenvalue problem. For sufficiently small time steps, the method reduces the energy in every step and converges globally in $H^1$ to an eigenfunction. In particular, for any nonnegative starting value, the ground state is obtained. A series of numerical experiments demonstrates the computational efficiency of the method and its competitiveness with established discretizations arising from other gradient flows for this problem.
\end{abstract}

\section{Introduction}
A Bose-Einstein condensate (BECs) is an extreme state of matter formed by a dilute gas of bosons at ultra-cold temperatures, very close to absolute zero \cite{Bos24,DGP99,Ein24,PiS03}. In a BEC, individual particles (i.e. their wave packages) overlap, lose their identity, and form one single \quotes{super atom}. BECs allow to study macroscopic quantum phenomena such as superfluity (i.e. the frictionless flow of a fluid) on an observable scale. This is why BECs are a very relevant research area of modern quantum physics \cite{Alaeian_2017,ARV01,Fet09,Leg03,MAH99}. For a general mathematical description of BECs and corresponding analytical results we refer exemplarily to \cite{Aft06,BaC13b,BrS19,LSY01,PiS03}.

In this paper we consider stationary states of a BEC modeled by the Gross-Pitaevskii eigenvalue problem (GPE) in real-valued variables. In non-dimensional form, the GPE seeks $L^2$-normalized eigenfunctions $z^{\ast} \in H^1(\R^d)$ and corresponding eigenvalues $\lambda^{\ast} \in \R$ such that 
\begin{align*}
 - \triangle z^{\ast} + V z^{\ast} + \beta |z^{\ast}|^2 z^{\ast} = \lambda z^{\ast}.
\end{align*}
In the context of Bose-Einstein condensates, a solution $z^{\ast}$ represents a stationary quantum state of the condensate, $|z^{\ast}|^2$ is the corresponding density and $\lambda^{\ast}$ the so-called chemical potential. The function $V$ represents an external confining potential and the parameter $\beta$ depends on physical properties of the particles that form the BEC. Its sign determines the type of particle interactions. In this paper, we shall only consider the defocusing GPE, which covers the regime $\beta\ge0$, resembling repulsive particle interactions. The normalization constraint $\int_{\R^d} |z^{\ast}|^2 \dx=1$ is such that the total mass of the condensate equals the number of constituting particles (with probability $1$).

The numerical solution of the stationary GPE has been studied extensively in recent years; see e.g.  \cite{AfD01,ALT17,BCL06,BaD04,BJM03,BaS08,BaT03,COR09,CCH18,CCM10,CaL00,CGZ10,CHJ08,DaK10,DaP17,DiC07,FaT18,GaP01,HMP14b,JKM14,KaE10,RSB14,RSS09,XiX16} and the references therein.
Typically, the problem is rephrased  in terms of the energy functional 
$$
E(v) := \tfrac{1}{2}\int_{\R^d} |\nabla v|^2  + V \hspace{2pt} |v|^2  + \tfrac{\beta}{2} |v|^4\dx,
$$
where one is interested in finding the critical points of $E$ under the normalization constraint $\| v\|_{L^2(\R^d)}=1$. The unique global minimizer (the state of minimal energy) is called the ground state, whereas all other critical points are called excited states. The identification of critical points of $E$ can be accomplished by the construction of appropriate gradient flows of the form
\begin{align}
\label{intro-grad-flow}
z^{\prime}(t) = - P_{z,X}( \nabla_X E(\,z(t)\,) ),
\end{align}
where $\nabla_X E$ is the Sobolev gradient of the energy functional $E$ with respect to some inner product $(\cdot,\cdot)_X$ and where $P_{z,X}$ is the projection onto the tangent space associated with the normalization constraint. Depending on the choice of $(\cdot,\cdot)_X$ and the numerical time integration of the arising gradient flow, several numerical methods arise (cf. \cite{DaK10}). Presumably, the most popular method in the context of the GPE is the {\it Discrete Normalized Gradient Flow} (DNGF) \cite{BaD04} which is based on the choice $(\cdot,\cdot)_X=(\cdot,\cdot)_{L^2}$ of the $L^2$-inner product and a backward Euler-type time discretization with explicit treatment of the nonlinear term. Other approaches combine a forward Euler discretization with the choice $(\cdot,\cdot)_X=(\cdot,\cdot)_{H^1}$ \cite{KaE10,RSS09} or the choice $(\cdot,\cdot)_X=(\nabla \cdot ,\nabla \cdot)_{L^2} + (V \cdot , \cdot )_{L^2}$ \cite{DaK10}. These examples and their discrete version are briefly discussed in Section~\ref{sec:continuous:grad:flow}. For further variants, we refer to \cite{AfD01,ALT17,BaS08,DaP17,GaP01,RSB14}.

Although the aforementioned schemes for the GPE are empirically successful, their numerical analysis lacks a proof of global convergence to a critical point of $E$ and any quantification of convergence rates. There is not even a proof of monotonic energy dissipation of the iteration in analogy to the continuous gradient flow \eqref{intro-grad-flow}. The only result that comes close is for DNGF (based on the $L^2$-gradient) \cite{BaD04}. In the absence of any spatial discretization, the reduction of a modified energy is shown which deviates from the exact energy by a term of the form $\tfrac{\beta}{4}\| v \|_{L^4}^4$. Since this result exploits elliptic regularity theory, its generalization to a fully discrete setting involving e.g. a finite element discretization is not straight forward.

In Section~\ref{sec:discrete:grad:flow} of this paper, we present a new choice for the Sobolev gradient, where the inner product $(\cdot,\cdot)_X$ is not fixed, but evolves with time. It is selected in such a way that the Sobolev gradient equals the identity, thus leading to an optimal preconditioning of the flow. We show that the arising continuous gradient flow of the form \eqref{intro-grad-flow} is well-posed. Thanks to the optimal preconditioning, the problem can be discretized by the forward Euler scheme (cf. Section~\ref{sec:discrete:grad:flow}). The time-discrete method reduces the (correct) energy monotonically and converges globally in $H^1$ to a critical point of $E$ for sufficiently small time steps. These unique results remain valid even after Galerkin discretization in space. Furthermore, in Section~\ref{sec:positive:eigenstates} we prove that, for any non-negative initial value $z_0$, the method must necessarily converge to a strictly positive eigenfunction of the GPE. Since there exist no positive excited state, the method is guaranteed to converge to the ground state whenever $z_0\ge0$.

Exponential convergence of the new discrete Gradient flow with respect to the number of iterations (i.e. reduction of the error by a fixed factor in each step) remains open but is observed in numerical experiments. It is worth mentioning that, for a particular choice of the time step, the method recovers the inverse iteration for the nonlinear eigenvalue problem. 
Moreover, for this very time step, the method is equivalent to DNGF which indicates its competitiveness with the established approaches for the GPE. In some scenarios we even observe superior performance (see Section~\ref{sec:num:experiments}). This is particularly true when the time step is chosen adaptively by some standard line search strategy which appears to be cost neutral.

\section{Model problem and established gradient flows}
\label{sec:preliminaries}

We shall introduce the precise setup of the model problem of this paper and briefly recall the projected $L^2$- and $H^1$-Sobolev gradient flows at hand. Note that all functions and functionals considered in this paper are real-valued.

\subsection{Gross-Pitaevskii eigenvalue problem}

Since confinement potentials $V$ cause a localization of stationary states, it is common to consider the GPE on a bounded domain $\Omega  \subset \R^{d}$, for $d=1,2,3$, together with a homogenous Dirichlet boundary condition. In addition to the boundedness, we shall also assume that $\Omega$ is either a convex Lipschitz domain or a domain with a smooth boundary. The latter assumption is natural in this context and prevents singular behavior of stationary states at the artificial boundary. 
We also assume that the nonlinearity is defocusing, i.e., $\beta\ge0$, and that the potential is bounded almost everywhere, i.e. $V\in L^{\infty}(\Omega)$. Without loss of generality, we assume that $V\ge 1$ a.e., as a constant shift of $V$ would not affect the eigenfunctions but only shift the spectrum accordingly. Note that this assumption implies that all eigenvalues satisfy $\lambda^{\ast}>1$. We note that we only use $V\ge 1$ instead of $V\ge 0$ to avoid a repeated usage of the Poincar\'e inequality in our estimates.

We define the non-negative energy for a function $v\in H^1_0(\Omega)$ by
$$
E(v) := \tfrac{1}{2}\int_{\Omega}  |\nabla v|^2 +  V \hspace{2pt} |v|^2 + \tfrac{\beta}{2} |v|^4\dx.
$$
The energy functional is strictly convex and Fr\'echet differentiable, where the first Fr\'echet derivative is given by
\begin{align*}
\langle E^{\prime}(v) , w \rangle = \int_{\Omega} \nabla v \cdot \nabla w + V \hspace{2pt} v \hspace{2pt} w + \beta \hspace{2pt}|v|^2 v \hspace{2pt} w\dx.
\end{align*}
Here, $\langle \cdot,\cdot\rangle$ denotes the dual pairing between $H^{-1}(\Omega)$ and $H^{1}_0(\Omega)$.
The Gross-Pitaevskii eigenvalue problem (GPE) seeks the critical points $z^{\ast}\in H^1_0(\Omega)$ of the energy functional $E$ subject to the constraint $\| z^{\ast} \|_{L^2(\Omega)}=1$. A function $z^{\ast}$ is a critical point if there is a $\lambda^{\ast} \in \R$ such that
\begin{align}\label{weak-problem}
\langle E^{\prime}(z^{\ast}) , v \rangle = \lambda^{\ast} ( z^{\ast} , v )_{L^2(\Omega)} \qquad \mbox{for all } v\in H^1_0(\Omega).
\end{align}
For an $L^2$-normalized eigenfunction $z^{\ast}$, the energy is related to the corresponding eigenvalue $\lambda^{\ast}$ through the equality
\begin{align}
\label{groundstate-energy-formula} \lambda^{\ast} = 2 \hspace{2pt} E(z^{\ast}) + \tfrac{\beta}{2} \| z^{\ast} \|_{L^4(\Omega)}^4.
\end{align}
Classical Ljusternik-Schnirelman theory  (cf. \cite{Zei85}) for even, positive, convex functionals guarantees that problem \eqref{weak-problem} has infinitely many eigenvalues $0<\lambda_1^{\ast}<\lambda_2^{\ast}\le\lambda_3^{\ast} \le \cdots < \infty$. Of particular interest is the ground state of $E$ (the global minimizer) with ground state eigenvalue $\lambda_{\mbox{\rm\tiny GS}}=\lambda_1^{\ast}$.
The following result can be e.g. found in \cite{CCM10}.
\begin{proposition}\label{basic-existence-ground-state}
Under the general assumptions of this paper there exists a ground state $z_{\mbox{\rm\tiny GS}} \in H^1_0(\Omega)$ with $\| z_{\mbox{\rm\tiny GS}} \|_{L^2(\Omega)}=1$ such that  
\begin{align*}
E( z_{\mbox{\rm\tiny GS}} ) = \inf \{ E(z) \hspace{3pt} | \hspace{3pt} z\in H^1_0(\Omega) \mbox{ \rm and } \| z \|_{L^2(\Omega)} =1 \}.
\end{align*}
The (normalized) ground state is unique up to its sign, it is H\"older-continuous on $\overline{\Omega}$ and it satisfies $|z_{\mbox{\rm\tiny GS}}|>0$ in $\Omega$. Furthermore, the Lagrange multiplier $\lambda_{\mbox{\rm\tiny GS}}$ given by \eqref{groundstate-energy-formula}, is the smallest eigenvalue of the GPE \eqref{weak-problem} with corresponding eigenfunction $z_{\mbox{\rm\tiny GS}}$. This ground state eigenvalue $\lambda_{\mbox{\rm\tiny GS}}$ is simple.
\end{proposition}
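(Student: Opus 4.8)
The plan is to establish the six assertions in sequence: existence of a minimizer, reduction to a nonnegative minimizer together with the Euler--Lagrange equation, interior regularity and strict positivity, and finally uniqueness and simplicity. For existence I would use the direct method of the calculus of variations. The normalization $V\ge 1$ a.e. makes $E$ coercive on $H^1_0(\Omega)$, since $E(v)\ge\tfrac12\|\nabla v\|_{L^2(\Omega)}^2+\tfrac12\|v\|_{L^2(\Omega)}^2=\tfrac12\|v\|_{H^1_0(\Omega)}^2$. Taking a minimizing sequence $(z_n)$ on the constraint manifold $\{\|v\|_{L^2(\Omega)}=1\}$, coercivity bounds it in $H^1_0(\Omega)$; reflexivity yields a weak limit $z_{\mbox{\rm\tiny GS}}$, and the compact Sobolev embeddings of $H^1_0(\Omega)$ into $L^2(\Omega)$ and into $L^4(\Omega)$ (valid for $d\le3$) give strong convergence in $L^2$ and $L^4$. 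Weak lower semicontinuity of $v\mapsto\|\nabla v\|_{L^2(\Omega)}^2$ and continuity of the lower-order terms then force $E(z_{\mbox{\rm\tiny GS}})\le\liminf_n E(z_n)$, while strong $L^2$-convergence preserves the constraint $\|z_{\mbox{\rm\tiny GS}}\|_{L^2(\Omega)}=1$; hence $z_{\mbox{\rm\tiny GS}}$ is a minimizer. Since $|\nabla|v||=|\nabla v|$ a.e. for $v\in H^1_0(\Omega)$, one has $E(|v|)=E(v)$, so I may replace $z_{\mbox{\rm\tiny GS}}$ by $|z_{\mbox{\rm\tiny GS}}|\ge0$. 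A Lagrange-multiplier argument on the smooth constraint produces $\lambda_{\mbox{\rm\tiny GS}}\in\R$ satisfying \eqref{weak-problem}, and testing that identity with $z_{\mbox{\rm\tiny GS}}$ itself yields \eqref{groundstate-energy-formula}.

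For regularity and positivity I would rewrite the equation as $-\triangle z_{\mbox{\rm\tiny GS}}=(\lambda_{\mbox{\rm\tiny GS}}-V-\beta|z_{\mbox{\rm\tiny GS}}|^2)\,z_{\mbox{\rm\tiny GS}}=:f$. Since $z_{\mbox{\rm\tiny GS}}\in H^1_0(\Omega)\hookrightarrow L^6(\Omega)$ and $V\in L^\infty(\Omega)$, the right-hand side $f$ lies in $L^2(\Omega)$; elliptic regularity on the convex (or smooth) domain $\Omega$ then gives $z_{\mbox{\rm\tiny GS}}\in H^2(\Omega)$, and the embedding $H^2(\Omega)\hookrightarrow C^{0,\alpha}(\overline\Omega)$ for $d\le3$ furnishes the claimed H\"older continuity. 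Writing $c:=V+\beta|z_{\mbox{\rm\tiny GS}}|^2\ge0$ (which is in $L^\infty(\Omega)$), the nonnegative, nontrivial function $z_{\mbox{\rm\tiny GS}}$ satisfies $-\triangle z_{\mbox{\rm\tiny GS}}+c\,z_{\mbox{\rm\tiny GS}}\ge0$, and the strong maximum principle (equivalently, the Harnack inequality) forces $z_{\mbox{\rm\tiny GS}}>0$ in $\Omega$.

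The uniqueness up to sign is the heart of the matter and, I expect, the main obstacle, because the constraint manifold is not convex and so one cannot argue directly from the convexity of $E$. The standard device is to pass to the density variable $\rho=v^2$ for nonnegative $v$ and to exploit the nontrivial convexity of the map $\rho\mapsto\int_\Omega|\nabla\sqrt\rho|^2\dx$ on the admissible set $\{\rho\ge0,\ \int_\Omega\rho\dx=1\}$, which together with the convexity of $\rho\mapsto\int_\Omega V\rho+\tfrac\beta2\rho^2\dx$ turns the constrained minimization into a convex problem in $\rho$. If $v,w\ge0$ are two normalized minimizers with densities $\rho_v,\rho_w$, then the midpoint density $\tfrac12(\rho_v+\rho_w)$ is again admissible and its square root is an admissible competitor, so convexity gives $E\bigl(\sqrt{\tfrac12(\rho_v+\rho_w)}\bigr)\le\tfrac12\bigl(E(v)+E(w)\bigr)$. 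Equality must therefore hold throughout, and the equality case in the convexity of the gradient term forces $\sqrt{\rho_v}$ and $\sqrt{\rho_w}$ to be proportional; combined with the common mass this yields $\rho_v=\rho_w$, hence $v=w$. Together with $E(|v|)=E(v)$ this proves uniqueness up to sign.

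Finally, for simplicity of $\lambda_{\mbox{\rm\tiny GS}}$ I would linearize about the ground state: the strictly positive $z_{\mbox{\rm\tiny GS}}$ is a positive eigenfunction of the linear Schr\"odinger operator $L:=-\triangle+V+\beta z_{\mbox{\rm\tiny GS}}^2$ with $L^\infty$-potential for the eigenvalue $\lambda_{\mbox{\rm\tiny GS}}$. By the Perron--Frobenius/Krein--Rutman theory for such self-adjoint operators, a strictly positive eigenfunction necessarily belongs to the smallest eigenvalue, and that eigenvalue is simple. This both identifies $\lambda_{\mbox{\rm\tiny GS}}$ as the lowest eigenvalue and establishes its simplicity.
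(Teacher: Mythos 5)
The paper does not prove this proposition itself; it is quoted from \cite{CCM10}, so there is no in-paper argument to compare against line by line. Your overall strategy (direct method with $V\ge1$ giving coercivity, symmetrization $v\mapsto|v|$, the hidden convexity of $\rho\mapsto\int_\Omega|\nabla\sqrt\rho|^2\dx$ for uniqueness, $H^2$-regularity plus the strong maximum principle for H\"older continuity and strict positivity) is the standard route and is essentially sound. Two small points: the equality case of the convexity argument is only needed when $\beta=0$, since for $\beta>0$ the strict convexity of $\rho\mapsto\tfrac\beta2\int_\Omega\rho^2\dx$ already forces $\rho_v=\rho_w$; and passing from \emph{uniqueness among nonnegative minimizers} to \emph{uniqueness up to sign} requires one more line — any minimizer $v$ has $|v|=z_{\mbox{\rm\tiny GS}}>0$, and since $v$ is continuous on the connected domain $\Omega$ and never vanishes there, it cannot change sign, so $v=\pm z_{\mbox{\rm\tiny GS}}$.

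The genuine gap is in the last paragraph. Your Krein--Rutman argument shows that $\lambda_{\mbox{\rm\tiny GS}}$ is the smallest eigenvalue of the \emph{linearized} operator $A_{z_{\mbox{\rm\tiny GS}}}=-\triangle+V+\beta z_{\mbox{\rm\tiny GS}}^2$ and that it is simple as such; this is exactly the statement the paper later uses (the $\mu_1=\lambda_{\mbox{\rm\tiny GS}}$ claim in the exponential-convergence proof), so that part is fine. But the proposition asserts that $\lambda_{\mbox{\rm\tiny GS}}$ is the smallest eigenvalue of the \emph{nonlinear} problem \eqref{weak-problem}, and this does not follow from the linearized statement. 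For another normalized eigenpair $(\lambda^{\ast},z^{\ast})$ one has $\lambda^{\ast}=a_{z^{\ast}}(z^{\ast},z^{\ast})=a_{z_{\mbox{\rm\tiny GS}}}(z^{\ast},z^{\ast})+\beta\int_\Omega\bigl(|z^{\ast}|^2-z_{\mbox{\rm\tiny GS}}^2\bigr)|z^{\ast}|^2\dx\ge\lambda_{\mbox{\rm\tiny GS}}+\beta\int_\Omega\bigl(|z^{\ast}|^2-z_{\mbox{\rm\tiny GS}}^2\bigr)|z^{\ast}|^2\dx$, and the remainder term has no sign; neither this nor the energy inequality $E(z^{\ast})\ge E(z_{\mbox{\rm\tiny GS}})$ (which only yields $\lambda^{\ast}-\lambda_{\mbox{\rm\tiny GS}}\ge\tfrac\beta2(\|z^{\ast}\|_{L^4}^4-\|z_{\mbox{\rm\tiny GS}}\|_{L^4}^4)$) closes the argument. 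The paper itself flags the link between eigenvalue ordering and energy ordering as a nontrivial fact, so this step needs a genuinely different argument (as in \cite{CCM10}) rather than the Perron--Frobenius property of the frozen-density operator alone.
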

We stress the nontrivial observation that a normalized eigenfunction to the smallest eigenvalue of the GPE is always a global minimizer of the energy functional $E$. We are not aware of any result that ensures that the ordering of the eigenvalues $\lambda_n^{\ast}$ by size still corresponds with the ordering of the energies $E(z_n^{\ast})$ by size for $n>1$.

Other than the ground state, excited states are not unique (up to sign) in general. E.g., on a circular domain with an isotropic quadratic potential, the eigenvalues that correspond to excited states can even have an infinite multiplicity due to rotational invariance of $E$. 

%

\subsection{Projected Sobolev gradient flows}
We shall briefly recall the basic concept of projected gradient flows. For a detailed introduction to the topic in the context of the Gross-Pitaevskii equation, we refer to \cite{KaE10}. 

We consider the energy functional $E$ along with a Hilbert space $H^1_0(\Omega) \subset X \subset  L^2(\Omega)$ with inner product $(\cdot,\cdot)_X$ as the energy dissipation mechanism. Various choices for $X$ are possible and lead to different gradient flows. With this, let $\nabla_X E(z)$ denote the Riesz-representative of $E^{\prime}(z)$ in the space $X$, i.e., $\nabla_X E(z) \in X$ satisfies
\begin{align}
\label{def-sobolev-gradient}
( \nabla_X E(z)  , v )_{X} = \langle E^{\prime}(z) , v \rangle \qquad \mbox{for all } v \in X.
\end{align}
The operator $\nabla_X E : H^1_0(\Omega) \rightarrow X$ is called the {\it Sobolev gradient of $E$ with respect to $X$}. For the sake of mass conservation along the flow, we define the tangent space of the constraint $\| z \|_{L^2(\Omega)}^2=1$ in $X$ by
$$
T_{z,X} := \{ v \in X | \hspace{2pt} (v,z)_{L^2(\Omega)}=0 \}.
$$
Note that $T_{z,X}$ is the null space of the Fr\'echet derivative of the functional $v\mapsto\| v \|_{L^2(\Omega)}^2$ on $X$ evaluated at $z$. If $z^{\prime}=P(z)$ for some operator $P: H^1_0(\Omega) \rightarrow T_{z,X}$, then we have $z^{\prime} \in X \subset L^2(\Omega)$ and, hence,
$$
\frac{1}{2} \frac{\mbox{d}}{\mbox{d}t} \| z(t) \|_{L^2(\Omega)}^2 = (z^{\prime}(t) , z(t))_{L^2(\Omega)} = (P(\hspace{1pt}z(t)\hspace{1pt}) , z(t))_{L^2(\Omega)} =0,
$$
i.e., we have mass conservation with $\| z(t) \|_{L^2(\Omega)}^2=\| z(0) \|_{L^2(\Omega)}^2$ for all $t\ge0$.
This motivates to seek the best approximation of the Sobolev gradient $\nabla_X E(z)$ in the tangent space $T_{z,X}$. The $X$-orthogonal projection $P_{z,X}(v) \in T_{z,X}$ of $v$ onto $T_{z,X}$ is given by
\begin{align*}
( P_{z,X}(v) , \psi )_X = ( v  , \psi )_X \qquad \mbox{for all } \psi \in T_{z,X},
\end{align*}
and be expressed in terms of the Riesz-representative $R_X(z) \in X$ of $z$ in $X$ by 
$$
 P_{z,X}(v) = v -  \frac{(z,v)_{L^2(\Omega)}}{(z,R_X(z))_{L^2(\Omega)}} R_X(z).
$$
Given some sufficiently smooth initial value $z_0$, the projected Sobolev gradient flow is then characterized by  
\begin{align*}
z^{\prime}(t)  = - (P_{z(t),X} \circ \nabla_X E)(\hspace{1pt}z(t)\hspace{1pt}) \qquad \mbox{for } t \ge 0
\qquad \mbox{and } \qquad z(0)=z_0.
\end{align*}
We shall discuss three choices of spaces $(X,(\cdot,\cdot)_X)$ along with suitable time discretizations in Sections~\ref{ss:ex1}--\ref{ss:ex3} below. 

\subsubsection{Projected $L^2$-gradient flow}\label{ss:ex1}
The most popular choice $X=L^2(\Omega)$ leads to the ordinary $L^2$-gradient flow. In this case, $R_{L^2}(z)=z$ and the projection reads
 $$
P_{z,L^2}(v) = v -  \frac{(z,v)_{L^2(\Omega)}}{(z,z)_{L^2(\Omega)}} z.
$$
The $L^2$-gradient is given by the Gross-Pitaevskii differential operator. In particular, for any $z \in H^2(\Omega)$, we have
$$
\nabla_{L^2}E(z) = - \triangle z + V \hspace{2pt} z + \beta \hspace{2pt}|z|^2 z.
$$
With $A_{z} v:=- \triangle v + V \hspace{2pt} v + \beta \hspace{2pt}|z|^2 v$, the projected $L^2$-gradient flow is given by 
\begin{align}
\label{continuous-DFL2}
z^{\prime}(t)  = - A_z z + \langle A_z z,z\rangle \frac{z}{ \|z \|_{L^2(\Omega)}^2} \quad \mbox{for } t \ge 0
\end{align}
and some initial value $z(0)=z_0 \in H^1_0(\Omega) \cap H^2(\Omega)$.
This is the normalized gradient flow of \cite[Section 2.3]{BaD04}. For focusing nonlinearities, Faou and J\'ez\'equel \cite{FaT18} proved exponential $L^2$-convergence of the flow to an eigenfunction if the starting value was selected sufficiently close. This is to our best knowledge the only convergence result for the projected $L^2$-gradient flow in the context of nonlinear Schr\"odinger equations. Applying a certain first order splitting method together with a semi-implicit backward Euler discretization with step size $\tau>0$, the DNGF approach is obtained \cite{BaD04}. For the sake of consistent notation we will refer to it as GF$L^2$.
\begin{definition}[Method: GF$L^2$ (known as DNGF)]
Let $z^0 \in H^1_0(\Omega)$ be given. Then the GF$L^2$ iteration for $n\ge 0$ reads
\begin{align}
\label{GFL2-eqn}
z^{n+1} = \alpha_n (1 + \tau A_{z^n})^{-1}  z^n
\qquad \mbox{where } \qquad
\alpha_{n} := \| (1 + \tau A_{z^n})^{-1}  z^n \|_{L^2(\Omega)}^{-1}.
\end{align}
\end{definition}
By construction, the continuous flow \eqref{continuous-DFL2} is mass-conservative and energy-dissipative. However, on the time-discrete level, energy dissipation is only established for a modified energy (cf. \cite[Lemma 2.10]{BaD04}) that can be seen as an eigenvalue functional. There is no proof of global convergence in $H^1(\Omega)$ to a critical point of $E$, neither for the GF$L^2$ iteration nor the continuous flow.
However, local convergence in $H^1(\Omega)$ was established in \cite{FaT18} for focusing nonlinearities, i.e. for $\beta=-1$.

Promising computational improvements of GF$L^2$ by using sophisticated preconditioners were obtained in \cite{ALT17}.

\subsubsection{Projected $H^1$-Sobolev gradient flow}\label{ss:ex2}
In the second example, we consider the choice $X=H^1_0(\Omega)$ equipped with the standard inner product $(\nabla \cdot , \nabla \cdot)_{L^2(\Omega)}$ (cf. \cite{KaE10}). Then the Ritz-projection $R_{H^1}:L^2(\Omega) \rightarrow H^1_0(\Omega)$ is characterized by
\begin{align*}
( \nabla R_{H^1}(z) , \nabla v )_{L^2(\Omega)} = ( z , v )_{L^2(\Omega)} \qquad \mbox{for all } v \in H^1_0(\Omega).
\end{align*}
The Sobolev gradient $\nabla_{H^1}E(z) \in H^1_0(\Omega)$ is defined according to \eqref{def-sobolev-gradient} and the continuous flow reads
\begin{align}
\label{continuous-DFH1}
z^{\prime}  = - \nabla_{H^1}E(z) + \frac{( \nabla_{H^1}E(z) ,z )_{L^2(\Omega)}}{ ( R_{H^1}(z) ,z )_{L^2(\Omega)} } R_{H^1}(z)   \qquad \mbox{for } t \ge 0
\qquad \mbox{and } \qquad z(0)=z_0.
\end{align}
\cite[Thms.~5 and 6]{KaE10} reports well-posedness and exponential convergence of the flow to a critical point of $E$ in $H^1$. The discretization of the continuous flow \eqref{continuous-DFH1} using the forward Euler method leads to the GF$H^1$ approach.
\begin{definition}[Method: GF$H^1$]
Let $z^0 \in H^1_0(\Omega)$ be given. Then the GF$H^1$ iteration for $n\ge 0$ reads
\begin{align}
\label{GFH1-eqn}
\hat{z}^{n+1}  = z^n - \tau \nabla_{H^1}E(z^n) + \tau \frac{( \nabla_{H^1}E(z^n) ,z^n )_{L^2(\Omega)}}{ ( R_{H^1}(z^n) ,z^n )_{L^2(\Omega)} } R_{H^1}(z^n)
\end{align}
with the normalization $z^{n+1}=\hat{z}^{n+1} / \| \hat{z}^{n+1} \|_{L^2(\Omega)}$ after each iteration.
\end{definition}
To the best of our knowledge, there is neither a proof of energy dissipation of the GF$H^1$ iteration \eqref{GFH1-eqn} nor convergence in $H^1(\Omega)$ to a critical point of $E$. In numerical experiments, the desired properties are observed for sufficiently small $\tau$ (cf. \cite{KaE10,DaK10} or the numerical experiments in Section \ref{subsec:mod1:groundstates}).

\subsubsection{Projected $a_0$-Sobolev gradient flow}\label{ss:ex3}
In the final example we choose $X=H^1_0(\Omega)$ again, but equip it with an inner product that incorporates the potential $V$. We set $( \cdot ,\cdot )_X = a_0(\cdot,\cdot)$, where
$$
a_0( v ,w ) := (\nabla v , \nabla w )_{L^2(\Omega)} + (V \hspace{2pt} v , w )_{L^2(\Omega)}. 
$$
This choice was proposed in \cite{DaK10} in a more general setup that involves angular momentum rotation. Define the Ritz-projection $R_{a_0}(z) \in H^1_0(\Omega)$ by
$$
a_0( R_{a_0}(z) , v ) = ( z , v )_{L^2(\Omega)} \qquad \mbox{for all } v \in H^1_0(\Omega).
$$
Then, the continuous projected gradient flow reads
\begin{align}
\label{continuous-DFa0}
z^{\prime} &= - z - R_{a_0}(\beta |z|^2 z)
+ \frac{(z,z + R_{a_0}(\beta |z|^2 z) )_{L^2(\Omega)}}{(z, R_{a_0}z)_{L^2(\Omega)} } R_{a_0} z
\end{align}
completed by the initial condition $z(0)=z_0\in H^1_0(\Omega)$. Well-posedness of this gradient flow 
follows from \cite[Theorem 3.2]{DaK10}.

A forward Euler discretization leads to the following method
\begin{definition}[Method: GF$a_0$]
Let $z^0 \in H^1_0(\Omega)$ be given. Then the GF$a_0$ iteration for $n\ge 0$ reads
\begin{align}
\label{GFa0-eqn}
\hat{z}^{n+1}  = (1 -\tau) z^n - \tau R_{a_0}(\beta |z^n|^2 z^n)
+ \tau \frac{(z^n,z^n + R_{a_0}(\beta |z^n|^2 z^n) )_{L^2(\Omega)}}{(z^n, R_{a_0}z^n)_{L^2(\Omega)} } R_{a_0} z^n,
\end{align}
together with the normalization $z^{n+1}=\hat{z}^{n+1} / \| \hat{z}^{n+1} \|_{L^2(\Omega)}$.
\end{definition}
Proofs of energy reduction or the convergence of $z^n$ to a stationary point of $E$ are not available in the literature.

\section{Continuous Projected $a_z$-Soblev Gradient Flow}
\label{sec:continuous:grad:flow}
In this section we propose and analyze a new Sobolev gradient flow in $X=H^1_0(\Omega)$ based on an inner product that changes with the flow itself. For any $z\in H^1_0(\Omega)$, we define the weighted energy inner product $a_z(\cdot,\cdot)$ by  
\begin{align*}
a_z(v,w) := \int_{\Omega} \nabla v \cdot \nabla w + V \hspace{2pt} v \hspace{2pt} w + \beta \hspace{2pt}|z|^2 v \hspace{2pt} w\dx
\end{align*}
for $v,w\in H^1_0(\Omega)$. Since $\langle E^{\prime}(z) , v \rangle = a_z(z,v)$ for any $z\in H^1_0(\Omega)$, the Sobolev gradient of $E(z)$ with respect to $a_z(\cdot,\cdot)$ is the identity, i.e., $\nabla_{a_z} E(z) = z$. 
The gradient flow of $E$ with respect to $\nabla_{a_z}$ projected into the tangent space associated with mass constraint $\int_{\Omega}|z|^2\dx=1$ is thus characterized by
$$
z^{\prime}(t) = - P_{z(t)} \nabla_{a_{z(t)}} E(z(t)) = - P_{z(t)} (\hspace{1pt}z(t)\hspace{1pt}).
$$
The projection $P_{z}$ can be written as
\begin{align}
\label{def-Pz}P_z(h) = h - \frac{(z,h)_{L^2(\Omega)}}{(z,\mathcal{G}_z z)_{L^2(\Omega)} } \mathcal{G}_z z, 
\end{align}
where $\mathcal{G}_z$ is just the Green's operator (or Ritz-projection) associated with the time-dependent inner product $a_z(\cdot,\cdot)$, i.e., for any $f \in L^2(\Omega)$, $\mathcal{G}_z(f) \in H^1_0(\Omega)$ satisfies
\begin{align*}
a_z( \hspace{2pt} \mathcal{G}_z(f) ,  v ) = (f ,v)_{L^2(\Omega)}
\end{align*}
for all $v \in H^1_0(\Omega)$. 
Altogether, this yields the following projected gradient flow problem. 
\begin{definition}[Projected $a_z$-Sobolev Gradient Flow]\label{def-weighted-gradient-flow}
Given $z_0 \in H^1_0(\Omega)$ with $\| z_0 \|_{L^2(\Omega)}=1$, find a differentiable function 
$z \in C^1([0,\infty);H^1_0(\Omega))$
with $z(0)=z_0$ such that, for all $t>0$,
\begin{align}
\label{sobolev-flow-az}
z^{\prime}(t) = - z(t) + \gamma_{z(t)} \mathcal{G}_{z(t)} z(t), \qquad \mbox{where } 
\gamma_{z} :=  \frac{(z,z)_{L^2(\Omega)}}{a_{z}(\mathcal{G}_{z} z ,\mathcal{G}_{z} z)} >0.
\end{align}
\end{definition}
The subsequent theorem states the well-posedness of this flow and all its  important properties.
\begin{theorem}\label{main-result-1-cont-sob-grad}
For any initial value $z_0 \in H^1_0(\Omega)$ with $\| z_0 \|_{L^2(\Omega)}=1$, there exists a unique global solution $z$ to the Sobolev gradient flow problem stated in Definition \ref{def-weighted-gradient-flow}. The flow is mass-conservative, i.e., $\| z(t) \|_{L^2(\Omega)}=1$ for all $t\in\mathbb{R}_{\ge0}$, and energy-dissipative, i.e. $E(\hspace{1pt}z(t)\hspace{1pt}) \le E(\hspace{1pt}z(s)\hspace{1pt})$ for all $0\le s \le t < \infty$. Moreover, $z$ converges globally in $H^1(\Omega)$ to an eigenfunction $z^{\ast}$ with eigenvalue $\lambda^{\ast}=\gamma_{z^{\ast}}$ of the Gross-Pitaevskii equation~\eqref{weak-problem}. 

If $z^{\ast}=z_{\mbox{\rm\tiny GS}}$ is the unique positive ground state eigenfunction from Proposition \ref{basic-existence-ground-state} (which can be obtained by selecting $z_0$ as a nonnegative functions) then the convergence rate is asymptotically exponential in the following sense. For all $0<\delta \le 1$,  there exists some $c_{\delta}>0$ and a finite time $0<t_{\delta}<\infty$, such that for all $t\ge t_{\delta}$ 
\begin{align*}
\left(\|\nabla z_{\mbox{\rm\tiny GS}} - \nabla z(t)\|_{L^2(\Omega)}^2 + \|\sqrt{V}(z_{\mbox{\rm\tiny GS}} - z(t))\|_{L^2(\Omega)}^2\right)^{1/2}
\le \hspace{2pt} c_{\delta} \operatorname{exp}\left(-\left(1 - \frac{\lambda_{\mbox{\rm\tiny GS}}}{\mu_2}  -\delta \right)t\right). 
\end{align*}
Here, $\lambda_{\mbox{\rm\tiny GS}}>0$ is the ground state eigenvalue and $\mu_2>\lambda_{\mbox{\rm\tiny GS}}$ is the second eigenvalue of the linearized eigenvalue problem seeking $w_i \in  H^1_0(\Omega)$ with $\| w_i \|_{L^2(\Omega)}=1$ and $\mu_i \in \R$ such that
$$
a_{z_{\mbox{\rm\tiny GS}}}( w_i , v ) = \mu_i ( w_i , v )_{L^2(\Omega)} \qquad \mbox{for all } v \in H^1_0(\Omega). 
$$
\end{theorem}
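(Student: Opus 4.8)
The plan is to prove the four assertions in the order they are stated -- well-posedness (local and then global), the two structural identities (mass conservation and energy dissipation), global $H^1$-convergence to an eigenfunction, and finally the exponential rate -- since each step feeds the next. The starting observation is that the right-hand side is exactly the projected gradient $z' = F(z)$ with $F(z) = -z + \gamma_z \mathcal{G}_z z = -P_z z$, where $P_z$ from \eqref{def-Pz} is the $a_z$-orthogonal projection onto $z^{\perp}$ (this follows from $(z,\mathcal{G}_z z)_{L^2(\Omega)} = a_z(\mathcal{G}_z z,\mathcal{G}_z z)$, so the prefactor in $P_z$ equals $\gamma_z$). For local existence I would verify that $F$ is locally Lipschitz on $H^1_0(\Omega)$ and invoke Picard--Lindel\"of in this Banach space. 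The only nontrivial ingredients are $z\mapsto\mathcal{G}_z$ and $z\mapsto\gamma_z$: since $V\ge1$ and $\beta\ge0$, the form $a_z$ is coercive with a constant independent of $z$, so $\mathcal{G}_z$ is well defined by Lax--Milgram and uniformly bounded; subtracting the defining identities for $\mathcal{G}_{z_1}$ and $\mathcal{G}_{z_2}$ and testing with their difference reduces the Lipschitz bound to controlling $\beta(|z_1|^2-|z_2|^2)$, which is handled by the embedding $H^1_0(\Omega)\hookrightarrow L^6(\Omega)$ valid for $d\le3$. Mass conservation is then immediate because $z'=-P_z z$ lies in $T_z$, so $(z',z)_{L^2(\Omega)}=0$. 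For energy dissipation I would use $\langle E'(z),v\rangle=a_z(z,v)$ to compute $\tfrac{d}{dt}E(z)=a_z(z,z')=-a_z(z,P_z z)=-\|z'\|_{a_z}^2\le0$, using that $P_z$ is the $a_z$-orthogonal projection. The bound $E\ge\tfrac12\|\cdot\|_{H^1_0}^2$ turns $E(z(t))\le E(z_0)$ into a uniform $H^1$ bound, which rules out finite-time blow-up and extends the solution globally.

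For convergence to an eigenfunction, integrating the dissipation identity gives $\int_0^\infty\|z'(t)\|_{a_{z(t)}}^2\dt = E(z_0)-E_\infty<\infty$, and uniform coercivity upgrades this to $z'\in L^2(0,\infty;H^1_0(\Omega))$. The uniform $H^1$ bound together with the compact embedding into $L^2(\Omega)$ (and elliptic regularity to promote weak limits to strong $H^1$ limits) makes the $\omega$-limit set nonempty, and the vanishing of the dissipation on it forces every limit point to satisfy $z=\gamma_z\mathcal{G}_z z$, i.e.\ to be an eigenfunction of \eqref{weak-problem} with $\lambda^\ast=\gamma_{z^\ast}$. The delicate point is passing from such subsequential convergence to convergence of the entire trajectory to a \emph{single} eigenfunction. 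Here I would invoke a {\L}ojasiewicz--Simon gradient inequality: $E$ is real-analytic and the state-dependent metric $a_z$ is uniformly equivalent to the fixed $H^1_0$ metric, so near any critical point one has $|E(z)-E(z^\ast)|^{1-\theta}\le C\|z'\|_{a_z}$ for some $\theta\in(0,\tfrac12]$; combined with the dissipation identity this yields $z'\in L^1(0,\infty;H^1_0(\Omega))$, hence a Cauchy trajectory converging in $H^1$ to one eigenfunction. Establishing this {\L}ojasiewicz--Simon inequality for the constrained functional with its state-dependent metric is the main obstacle of the whole argument.

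For the exponential rate I would localize around $z_{\mbox{\rm\tiny GS}}$, which by the previous step the flow eventually approaches whenever $z_0\ge0$. The key spectral fact is that $z_{\mbox{\rm\tiny GS}}>0$ is the principal (hence simple) eigenfunction of the Schr\"odinger operator associated with $a_{z_{\mbox{\rm\tiny GS}}}$, by Krein--Rutman/Perron--Frobenius, so that $\mu_1=\lambda_{\mbox{\rm\tiny GS}}$ and $\mu_2>\lambda_{\mbox{\rm\tiny GS}}$ strictly. Expanding $z(t)$ in the $a_{z_{\mbox{\rm\tiny GS}}}$-orthonormal eigenbasis $\{w_i\}$ and using $\mathcal{G}_{z_{\mbox{\rm\tiny GS}}}w_i=\mu_i^{-1}w_i$ together with $\gamma_{z(t)}\to\lambda_{\mbox{\rm\tiny GS}}$, the linearized flow decouples into $\dot c_i=-(1-\lambda_{\mbox{\rm\tiny GS}}/\mu_i)\,c_i$ for the tangential modes $i\ge2$, the mode $i=1$ being frozen by the normalization constraint; the slowest rate among these is $1-\lambda_{\mbox{\rm\tiny GS}}/\mu_2>0$. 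The remaining work, which is the second significant difficulty, is to show that the corrections arising from the cubic nonlinearity, from the variation of $\mathcal{G}_z$ through the coefficient $\beta|z|^2$, and from $\gamma_{z(t)}-\lambda_{\mbox{\rm\tiny GS}}$ are all of higher order and can be absorbed into the arbitrary loss $\delta$ once $t\ge t_\delta$. Transferring the modal decay back to the norm $(\|\nabla(\cdot)\|_{L^2(\Omega)}^2+\|\sqrt{V}(\cdot)\|_{L^2(\Omega)}^2)^{1/2}$, which is the $a_0$-norm and is equivalent to the $a_{z_{\mbox{\rm\tiny GS}}}$-norm since $z_{\mbox{\rm\tiny GS}}\in L^\infty(\Omega)$, then yields the stated estimate.
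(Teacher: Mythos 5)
Your first half is sound and coincides with the paper's route: local Lipschitz continuity of $z\mapsto \gamma_z\mathcal{G}_z z$ plus Picard--Lindel\"of for local existence, mass conservation from $z'\in T_z$, dissipation from $\tfrac{d}{dt}E(z)=a_z(z,z')=-a_z(z',z')$, and continuation to a global solution from the resulting a priori $H^1$ bound. The problems lie in the two places you yourself flag as difficulties, and they are not merely matters of missing detail. For convergence of the whole trajectory you propose a \L{}ojasiewicz--Simon inequality for the constrained functional in the state-dependent metric and explicitly leave it unproven; since this is the step that turns subsequential convergence into convergence, that part of the theorem is simply not established by your plan. (The paper does not use \L{}ojasiewicz--Simon at all: it integrates the dissipation identity to get $\int_0^\infty a_{z(t)}(z',z')\,dt=E(z_0)-E_\infty<\infty$, extracts the limit $z^{\ast}$, and identifies it via $z^{\ast}=\gamma_{z^{\ast}}\mathcal{G}_{z^{\ast}}z^{\ast}$.)

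The more concrete error is in the rate argument. Your modal ansatz $\dot c_i=-(1-\lambda_{\mbox{\rm\tiny GS}}/\mu_i)\,c_i$ assumes the linearization of $z\mapsto -z+\gamma_z\mathcal{G}_z z$ at $z_{\mbox{\rm\tiny GS}}$ diagonalizes in the eigenbasis of $a_{z_{\mbox{\rm\tiny GS}}}$ and that the contribution of $\partial_z\mathcal{G}_z$ through the coefficient $\beta|z|^2$ is higher order. It is not: differentiating $a_z(\mathcal{G}_z w,v)=(w,v)_{L^2(\Omega)}$ gives $a_{z_{\mbox{\rm\tiny GS}}}(D\mathcal{G}[h]z_{\mbox{\rm\tiny GS}},v)=-2\beta\int_{\Omega} z_{\mbox{\rm\tiny GS}}\,(\mathcal{G}_{z_{\mbox{\rm\tiny GS}}}z_{\mbox{\rm\tiny GS}})\,h\,v\,dx$, a contribution of exactly the same order as the leading part (its quadratic form is $-\tfrac{2\beta}{\lambda_{\mbox{\rm\tiny GS}}}\int_{\Omega} z_{\mbox{\rm\tiny GS}}^2 h^2\,dx\sim\|h\|_{L^2(\Omega)}^2$) that does not commute with $\mathcal{G}_{z_{\mbox{\rm\tiny GS}}}$, so the linearized flow does not decouple and this term cannot be absorbed into $\delta$. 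The paper's proof hinges on noticing that this very term has a favorable sign: in $a_z(\langle\tfrac{d}{dz}(\mathcal{G}_z z),z'\rangle,z')=\|z'\|_{L^2(\Omega)}^2-a_z'\langle z',(\mathcal{G}_z z,z')\rangle$ the non-higher-order piece $2\beta\int_{\Omega}|z'|^2 z_{\mbox{\rm\tiny GS}}\mathcal{G}_{z_{\mbox{\rm\tiny GS}}}z_{\mbox{\rm\tiny GS}}\,dx$ is strictly positive and is discarded as an upper bound. The paper then avoids any eigenbasis expansion: it differentiates $f=\tfrac12 a_z(z',z')$, establishes the spectral estimate $\liminf_{t\to\infty}a_z(z',z')/\|z'\|_{L^2(\Omega)}^2\ge\mu_2$ by a compactness argument (any weak limit of $z'/\|z'\|_{L^2(\Omega)}$ is $L^2$- and $a_{z_{\mbox{\rm\tiny GS}}}$-orthogonal to $z_{\mbox{\rm\tiny GS}}$, then Courant--Fischer), applies Gr\"onwall to $f$, and integrates $z'$ over $(t,\infty)$ to obtain the $a_0$-norm bound. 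Your identification of the rate $1-\lambda_{\mbox{\rm\tiny GS}}/\mu_2$ and of the constraint freezing the first mode is correct, but as written the key absorption step would fail, and the two deferred steps are the actual content of the theorem.
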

The remainder of this section is devoted to the proof of the theorem.

\subsection{Energy decay, mass conservation and local well-posedness}
This subsection shows that a well-defined flow $z$ is energy-diminishing and mass-conserving, as expected.
\begin{lemma}[Mass conservation and energy reduction]\label{lemma-mass-con-energy-red}
We consider the weighted Sobolev gradient flow of Definition \ref{def-weighted-gradient-flow}. If $z(t)$ is well-defined on an interval $[0,T)$ for some $T>0$ then, for all $0\le t \le t' < T$,
\begin{align*}
\| z(t) \|_{L^2(\Omega)} = 1 \quad \text{and} \quad
E( \hspace{1pt} z(t') \hspace{1pt} ) \le E( \hspace{1pt} z(t) \hspace{1pt} ).
\end{align*} 
\end{lemma}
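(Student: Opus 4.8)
The plan is to recast the right-hand side of the flow \eqref{sobolev-flow-az} as the negative $a_z$-orthogonal projection of $z$ onto the tangent space $T_{z,X}=\{v\in H^1_0(\Omega)\,|\,(v,z)_{L^2(\Omega)}=0\}$, and then to read off both assertions from the defining properties of that projection. The algebraic cornerstone is the identity
$$
(z,\mathcal{G}_z z)_{L^2(\Omega)} = a_z(\mathcal{G}_z z,\mathcal{G}_z z),
$$
obtained simply by testing the defining equation $a_z(\mathcal{G}_z z,v)=(z,v)_{L^2(\Omega)}$ with $v=\mathcal{G}_z z$. Combined with the expression for $\gamma_z$ in \eqref{sobolev-flow-az}, this shows that $\gamma_z\,\mathcal{G}_z z = \tfrac{(z,z)_{L^2(\Omega)}}{(z,\mathcal{G}_z z)_{L^2(\Omega)}}\,\mathcal{G}_z z$, so that the flow reads $z' = -P_z z$ with $P_z$ as in \eqref{def-Pz}. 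I would also record, by testing the same equation with an arbitrary $\psi\in T_{z,X}$ (for which $(z,\psi)_{L^2(\Omega)}=0$), that $a_z(\mathcal{G}_z z,\psi)=0$; hence $P_z$ is genuinely the $a_z$-orthogonal projection onto $T_{z,X}$, in particular idempotent and $a_z$-self-adjoint, and it maps into $T_{z,X}$ by construction.

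For mass conservation I would differentiate $t\mapsto \|z(t)\|_{L^2(\Omega)}^2$, which is admissible since the flow is $C^1$ in $H^1_0(\Omega)\subset L^2(\Omega)$ on $[0,T)$. This gives $\tfrac12\tfrac{d}{dt}\|z\|_{L^2(\Omega)}^2 = (z',z)_{L^2(\Omega)} = -(P_z z,z)_{L^2(\Omega)}$. Because $P_z z\in T_{z,X}$, the right-hand side vanishes, so $\|z(t)\|_{L^2(\Omega)}$ is constant and equals $\|z_0\|_{L^2(\Omega)}=1$ throughout $[0,T)$.

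For energy dissipation the starting point is the identity $\langle E'(z),v\rangle = a_z(z,v)$ noted before Definition \ref{def-weighted-gradient-flow}. Applying the chain rule to the Fréchet-differentiable functional $E$ along the $C^1$ curve $z$ then yields
$$
\tfrac{d}{dt}E(z(t)) = \langle E'(z),z'\rangle = a_z(z,z') = -\,a_z(z,P_z z).
$$
Using that $P_z$ is the $a_z$-orthogonal projection onto $T_{z,X}$, I would rewrite $a_z(z,P_z z)=a_z(P_z z,P_z z)=a_z(z',z')\ge0$, whence $\tfrac{d}{dt}E(z(t))\le0$; integrating from $t$ to $t'$ gives $E(z(t'))\le E(z(t))$. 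The one point deserving care is to confirm that this chain-rule computation does not generate an extra contribution from the $z$-dependence of the weight $\beta|z|^2$ in $a_z$. This is settled by observing that $E'$ is the honest Fréchet derivative of $E$, whose quartic term contributes precisely $\beta|z|^2 z$, so that $\langle E'(z(t)),z'(t)\rangle$ already equals $a_{z(t)}(z(t),z'(t))$ with the correct time-dependent weight and no further terms appear. I expect this bookkeeping to be the only mild obstacle; everything else reduces to the two projection properties established at the outset.
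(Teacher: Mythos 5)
Your proposal is correct and follows essentially the same route as the paper's proof: mass conservation via $(P_z z,z)_{L^2(\Omega)}=0$, and energy dissipation via $\tfrac{d}{dt}E(z)=a_z(z,z')=-a_z(z',z')\le 0$, where the paper obtains the last equality by testing the flow equation with $z'$ and using $a_z(\mathcal{G}_z z,z')=(z,z')_{L^2(\Omega)}=0$, which is exactly your projection-orthogonality argument in slightly less abstract clothing. Your explicit check that the chain rule produces no extra term from the $z$-dependent weight is a worthwhile clarification, but it does not change the argument.
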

\begin{proof}
Let $t\in [0,T)$ be arbitrary but fixed. Noting $a_z(\mathcal{G}_{z} z,\mathcal{G}_{z} z)= (z,\mathcal{G}_{z} z)_{L^2(\Omega)}$ and testing in the $L^2$-variational formulation of \eqref{sobolev-flow-az} with $z$ yields
\begin{align*}
\tfrac{1}{2} \frac{\mbox{d}}{\mbox{d}t} \| z \|_{L^2(\Omega)}^2 
= - ( z , z )_{L^2(\Omega)} + \frac{(z,z)_{L^2(\Omega)}}{(z,\mathcal{G}_{z} z)_{L^2(\Omega)} } (\mathcal{G}_z z , z )_{L^2(\Omega)} = 0.
\end{align*}
This implies conservation of mass. By definition, $\mathcal{G}_z z \in H^1_0(\Omega)$ and, hence,
$$
z^{\prime}(t) = - z(t) + \gamma_{z(t)} \mathcal{G}_{z(t)} z(t) \in H^1_0(\Omega).
$$ 
We can therefore use $z^{\prime}$ as a test function in the energy-inner product to obtain
\begin{align*}
a_z( z^{\prime} , z^{\prime} ) = - a_z( z , z^{\prime} ) + \frac{(z,z)_{L^2(\Omega)}}{(z,\mathcal{G}_{z} z)_{L^2(\Omega)} }  a_z ( \mathcal{G}_z z , z^{\prime} ).
\end{align*}
This implies 
$$
a_z( z , z^{\prime} ) = \langle E^{\prime}(z) , z^{\prime} \rangle 
 = \frac{\mbox{d}}{\mbox{d}t} E(z) \quad \mbox{and} \quad
 a_z ( \mathcal{G} z , z^{\prime} ) = ( z , z^{\prime} )_{L^2(\Omega)} = \tfrac{1}{2} \frac{\mbox{d}}{\mbox{d}t} \| z \|_{L^2(\Omega)}^2  = 0.$$ 
 The combination of the previous equalities readily yields
\begin{align*}
0 \le a_{z(t)}( z^{\prime}(t) , z^{\prime}(t) ) = - \frac{\mbox{d}}{\mbox{d}t} E(\hspace{1pt}z(t)\hspace{1pt}),
\end{align*}
which shows that energy is reduced along the flow. 
\end{proof}
To prove local existence of $z(t)$ in a neighborhood of $z_0$ for some maximal time $T>0$, we define the bounded and closed set $H_{0,M}^1(\Omega)$ for a given $M>0$ by
$$
H_{0,M}^1(\Omega):= \left\{ v \in H^1_0(\Omega)| \hspace{2pt} \| v - z_0 \|_{H^1(\Omega)} \le M \right\}.
$$
We need to show that the operator that describes the right-hand side of the flow problem \eqref{sobolev-flow-az} is Lipschitz-continuous on $H_{0,M}^1(\Omega)$. For the ease of notation we set $\mathcal{K}z:=\mathcal{G}_{z} z$.

\begin{lemma}\label{continuity-calK}
The operator $\mathcal{K} : H^1_0(\Omega) \rightarrow H^1_0(\Omega)$ is Lipschitz-continuous and bounded on $H_{0,M}^1(\Omega)$. In particular, there exists a constant $L_M>0$ that depends on $M$, $z_0$, $\beta$, $\Omega$ and $d$, such that
\begin{align*}
\| \mathcal{K}(v) - \mathcal{K}(w) \|_{H^1(\Omega)}
\le L_M \| v - w \|_{H^1(\Omega)}
\qquad \mbox{for all } v,w \in H_{0,M}^1(\Omega).
\end{align*}
\end{lemma}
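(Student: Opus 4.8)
The plan is to reduce both assertions to a single energy estimate, obtained by testing the defining variational equation for $\mathcal{K}$ with the difference $\mathcal{K}(v)-\mathcal{K}(w)$, after first recording that the family of forms $a_z(\cdot,\cdot)$ is uniformly coercive and bounded over $H_{0,M}^1(\Omega)$. Since $V\ge1$ and $\beta\ge0$, every $a_z$ is coercive on $H^1_0(\Omega)$ with constant $1$, that is $a_z(\phi,\phi)\ge\|\phi\|_{H^1(\Omega)}^2$. For boundedness, the only nontrivial contribution is the cubic term $\beta\int_{\Omega}|z|^2\phi\psi\dx$, which I would control by Hölder's inequality as $\beta\|z\|_{L^6(\Omega)}^2\|\phi\|_{L^3(\Omega)}\|\psi\|_{L^3(\Omega)}$ and then by the Sobolev embedding $H^1(\Omega)\hookrightarrow L^6(\Omega)$, valid for $d\le3$. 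Because $\|z\|_{H^1(\Omega)}\le\|z_0\|_{H^1(\Omega)}+M=:C_M$ for every $z\in H_{0,M}^1(\Omega)$, this gives a continuity constant for $a_z$ that is uniform over $H_{0,M}^1(\Omega)$. Boundedness of $\mathcal{K}$ is then immediate: testing $a_z(\mathcal{K}z,\cdot)=(z,\cdot)_{L^2(\Omega)}$ with $\mathcal{K}z$ and using coercivity yields $\|\mathcal{K}z\|_{H^1(\Omega)}^2\le(z,\mathcal{K}z)_{L^2(\Omega)}\le\|z\|_{L^2(\Omega)}\|\mathcal{K}z\|_{H^1(\Omega)}$, hence $\|\mathcal{K}z\|_{H^1(\Omega)}\le\|z\|_{L^2(\Omega)}\le C_M$.

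For the Lipschitz estimate I would set $u_v:=\mathcal{K}(v)$ and $u_w:=\mathcal{K}(w)$ and subtract the two defining equations. Writing $a_v(u_v,\phi)-a_w(u_w,\phi)=a_v(u_v-u_w,\phi)+(a_v-a_w)(u_w,\phi)$ and noting that $a_v-a_w$ acts only through the cubic term, I obtain for all $\phi\in H^1_0(\Omega)$
$$
a_v(u_v-u_w,\phi) = (v-w,\phi)_{L^2(\Omega)} - \beta\int_{\Omega}(|v|^2-|w|^2)\,u_w\,\phi\dx.
$$
Choosing $\phi=u_v-u_w$ and invoking coercivity bounds $\|u_v-u_w\|_{H^1(\Omega)}^2$ from above by the right-hand side. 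The linear term is harmless, being at most $\|v-w\|_{L^2(\Omega)}\|u_v-u_w\|_{H^1(\Omega)}$.

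The crux is the cubic term. Here I would use the pointwise factorization $|v|^2-|w|^2=(|v|+|w|)(|v|-|w|)$ together with $\big||v|-|w|\big|\le|v-w|$, and then Hölder's inequality with exponents $(6,6,6,2)$ to estimate
$$
\beta\int_{\Omega}\big|(|v|^2-|w|^2)\,u_w\,(u_v-u_w)\big|\dx \le \beta\,\big\||v|+|w|\big\|_{L^6(\Omega)}\|v-w\|_{L^6(\Omega)}\|u_w\|_{L^6(\Omega)}\|u_v-u_w\|_{L^2(\Omega)}.
$$
Sobolev embedding converts the $L^6$-norms into $H^1$-norms; the factors $\big\||v|+|w|\big\|_{L^6(\Omega)}$ and $\|u_w\|_{L^6(\Omega)}$ are bounded by a multiple of $C_M$ using the uniform $H^1$-bound on $H_{0,M}^1(\Omega)$ and the boundedness of $\mathcal{K}$ established above, while $\|u_v-u_w\|_{L^2(\Omega)}\le\|u_v-u_w\|_{H^1(\Omega)}$. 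Collecting the two contributions and dividing by $\|u_v-u_w\|_{H^1(\Omega)}$ (the case of equality to zero being trivial) gives $\|u_v-u_w\|_{H^1(\Omega)}\le L_M\|v-w\|_{H^1(\Omega)}$ with a constant of the form $L_M=1+c\,\beta\,C_M^2$, where $c$ collects the Sobolev constants and thus depends only on $\Omega$ and $d$.

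The step I expect to be the main obstacle is precisely the cubic difference term. Without the factorization of $|v|^2-|w|^2$ and the dimension restriction $d\le3$ (which makes $H^1(\Omega)\hookrightarrow L^6(\Omega)$ available), the quartic product would not be integrable with uniform control; and the a priori bound $\|\mathcal{K}(w)\|_{H^1(\Omega)}\le C_M$ from the first part is exactly what is needed to keep the Lipschitz constant uniform over $H_{0,M}^1(\Omega)$ rather than merely locally finite.
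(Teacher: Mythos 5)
Your proposal is correct and follows essentially the same route as the paper: both derive the identity $a_v(\mathcal{K}v-\mathcal{K}w,\phi)=(v-w,\phi)_{L^2(\Omega)}-\beta\int_\Omega(|v|^2-|w|^2)\,\mathcal{K}w\,\phi\dx$, test with $\phi=\mathcal{K}v-\mathcal{K}w$, use coercivity of $a_v$, and control the cubic difference term by H\"older plus Sobolev embedding together with the a priori bound $\|\mathcal{K}w\|_{H^1(\Omega)}\lesssim\|w\|_{L^2(\Omega)}$, yielding $L_M=\mathcal{O}(1+\beta M_0^2)$. The only (immaterial) difference is the choice of H\"older exponents: the paper uses $L^4$ on all four factors via $H^1(\Omega)\hookrightarrow L^4(\Omega)$, while you use $(6,6,6,2)$ via $H^1(\Omega)\hookrightarrow L^6(\Omega)$; both are valid for $d\le 3$.
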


\begin{proof}
Let $v,w \in H_{0,M}^1(\Omega)$ and set $M_0:= M + \| z_0 \|_{H^1(\Omega)}$. Since
$$
\| \nabla \mathcal{K} v \|_{L^2(\Omega)}^2
\le a_v(  \mathcal{K}v , \mathcal{K} v ) = ( v , \mathcal{K}v )_{L^2(\Omega)},
$$
we conclude that \begin{align}
\label{bound-for-calK}\|  \mathcal{K} v  \|_{H^1(\Omega)} \le C \| v \|_{L^2(\Omega)} \le C M_0, 
\end{align}
where $C$ only depends on the Poincar\'e-Friedrichs constant. With this, we have
\begin{eqnarray*}
\lefteqn{\| \mathcal{K} v - \mathcal{K} w \|_{H^1(\Omega)}^2 \le  
a_0( \mathcal{K} v -\mathcal{K} w, \mathcal{K} v - \mathcal{K} w ) + \beta (|v|^2 (\mathcal{K} v - \mathcal{K} w ) , (\mathcal{K} v -\mathcal{K} w ) )_{L^2(\Omega)}} \\
&=&
a_0( \mathcal{K} v , \mathcal{K} v -\mathcal{K} w ) + \beta (|v|^2 \mathcal{K} v , (\mathcal{K} v -\mathcal{K} w ) )_{L^2(\Omega)} \\
&\enspace&\quad - a_0(\mathcal{K} w, \mathcal{K}v - \mathcal{K} w ) - \beta (|v|^2 \mathcal{K} w  , (\mathcal{K}v -\mathcal{K} w ) )_{L^2(\Omega)} \\
&=& ( v - w,  \mathcal{K} v - \mathcal{K} w )_{L^2(\Omega)} 
+ \beta ((|w|^2 - |v|^2) \mathcal{K} w  , (\mathcal{K} v - \mathcal{K} w ) )_{L^2(\Omega)} \\
&\lesssim& \| v - w \|_{H^1(\Omega)} \|  \mathcal{K} v - \mathcal{K} w \|_{H^1(\Omega)} 
+ \beta ((|w|^2 - |v|^2) \mathcal{K} w  , (\mathcal{K} v - \mathcal{K} w ) )_{L^2(\Omega)}.\hspace{30pt}
\end{eqnarray*}
Using the H\"older inequality and embedding estimates, the second term on the right-hand side can be bounded by
\begin{eqnarray*}
\lefteqn{|((|w|^2 - |v|^2) \mathcal{K} w  , (\mathcal{K} v - \mathcal{K} w ) )_{L^2(\Omega)}|} \\
&\le& \left( \| w \|_{L^4(\Omega)} + \| v \|_{L^4(\Omega)}  \right) \| \mathcal{K} w \|_{L^4(\Omega)}
\| v - w \|_{L^4(\Omega)} 
\| \mathcal{K} v - \mathcal{K} w \|_{L^4(\Omega)} \\
&\lesssim&
\left( \| w \|_{H^1(\Omega)} + \| v \|_{H^1(\Omega)}  \right) \| \mathcal{K} w \|_{H^1(\Omega)}
\| v - w \|_{H^1(\Omega)} 
\| \mathcal{K} v - \mathcal{K} w \|_{H^1(\Omega)}.
\end{eqnarray*}
Using \eqref{bound-for-calK} and $v,w \in H_{0,M}^1(\Omega)$ we conclude the existence of some $L_M=\mathcal{O}(1+ \beta M_0^2)$ such that the Lipschitz-continuity holds true.
\end{proof}
The Lipschitz-continuity of Lemma~\ref{continuity-calK} and the trivial observation that $\mathcal{K}v=0$ if and only if $v=0$ imply that there exists a sufficiently small neighbourhood $H_{0,M}^1(\Omega)$ of $z_0$ and a constant $c_M>0$ such that 
$$
\| \mathcal{K}v \|_{H^{1}(\Omega)} \ge c_M \qquad \mbox{for all } v \in H_{0,M}^1(\Omega).
$$
In such a neighborhood we have that the normalization factor $\gamma_{v}$, as a function in $v$, is also Lipschitz-continuous.
\begin{lemma}\label{gamma-continuity}
For any sufficiently small $M>0$ the functional $\gamma_{\cdot} : H^1_0(\Omega) \rightarrow \mathbb{R}$ is Lipschitz-continuous and bounded on $H_{0,M}^1(\Omega)$. In particular, there is $\tilde{L}_M>0$ (depending only on $M$, $z_0$, $\beta$, $V$, $\Omega$ and $d$) such that
\begin{align*}
|\gamma_v - \gamma_w |
\le \tilde{L}_M \| v - w \|_{H^1(\Omega)}
\qquad \mbox{for all } v,w \in H_{0,M}^1(\Omega).
\end{align*}
\end{lemma}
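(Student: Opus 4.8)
The plan is to treat $\gamma_v$ as a quotient $\gamma_v = N(v)/D(v)$, where $N(v) := \|v\|_{L^2(\Omega)}^2$ and $D(v) := (v,\mathcal{K}v)_{L^2(\Omega)} = a_v(\mathcal{K}v,\mathcal{K}v)$, and to show that both $N$ and $D$ are bounded and Lipschitz on $H_{0,M}^1(\Omega)$ while $D$ is additionally bounded away from zero. Once these three facts are in place, the Lipschitz continuity of $\gamma_{\cdot}$ follows from the elementary quotient estimate
$$
|\gamma_v - \gamma_w| \le \frac{|N(v)-N(w)|}{D(v)} + \frac{N(w)\,|D(v)-D(w)|}{D(v)\,D(w)}.
$$

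Setting $M_0 := M + \| z_0 \|_{H^1(\Omega)}$ as in the previous proof, the numerator $N$ is easy: from $|N(v)-N(w)| = |(\|v\|_{L^2(\Omega)}+\|w\|_{L^2(\Omega)})(\|v\|_{L^2(\Omega)}-\|w\|_{L^2(\Omega)})| \le 2M_0\|v-w\|_{L^2(\Omega)}$ together with the embedding $H^1_0(\Omega)\hookrightarrow L^2(\Omega)$ one obtains both the bound $N(v)\le M_0^2$ and the Lipschitz continuity of $N$. For the denominator I would write $D(v)-D(w) = (v-w,\mathcal{K}v)_{L^2(\Omega)} + (w,\mathcal{K}v-\mathcal{K}w)_{L^2(\Omega)}$ and estimate the two terms by Cauchy--Schwarz, invoking the uniform bound \eqref{bound-for-calK} on $\|\mathcal{K}v\|_{H^1(\Omega)}$ for the first term and the Lipschitz estimate of Lemma~\ref{continuity-calK} for the second. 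This yields $|D(v)-D(w)| \lesssim (M_0 + M_0 L_M)\|v-w\|_{H^1(\Omega)}$ as well as the bound $D(v)\le C M_0^2$.

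The key step---and the only one that is not a routine computation---is the uniform lower bound on $D$. Here I would exploit that, because $V\ge 1$ a.e. and $\beta\ge0$, the weighted form is coercive with $a_v(u,u) \ge \|u\|_{H^1(\Omega)}^2$ (up to the Poincar\'e--Friedrichs constant). Applying this with $u=\mathcal{K}v$ and combining it with the lower bound $\|\mathcal{K}v\|_{H^1(\Omega)}\ge c_M$ established just before the statement of the lemma gives
$$
D(v) = a_v(\mathcal{K}v,\mathcal{K}v) \ge \|\mathcal{K}v\|_{H^1(\Omega)}^2 \ge c_M^2 > 0
\qquad \mbox{for all } v\in H_{0,M}^1(\Omega).
$$

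Substituting $D(v),D(w)\ge c_M^2$, $N(w)\le M_0^2$, and the two Lipschitz estimates into the quotient bound above produces a constant $\tilde{L}_M = \mathcal{O}(M_0 c_M^{-2} + M_0^3(1+L_M) c_M^{-4})$ with the asserted dependence on $M$, $z_0$, $\beta$, $V$, $\Omega$ and $d$; boundedness of $\gamma_{\cdot}$ follows at once from $\gamma_v = N(v)/D(v) \le M_0^2 c_M^{-2}$. I expect no genuine obstacle beyond bookkeeping the constants: the one subtle point is that the positivity and controllability of $\gamma$ rest entirely on the coercivity of $a_v$ together with the already-established strict positivity $\|\mathcal{K}v\|_{H^1(\Omega)}\ge c_M$ from the preceding paragraph, so that the denominator cannot degenerate on the chosen neighborhood $H_{0,M}^1(\Omega)$.
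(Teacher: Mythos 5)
Your proof is correct and follows the same skeleton as the paper's (quotient of numerator and denominator, uniform lower bound $c_M$ on $\|\mathcal{K}v\|_{H^1(\Omega)}$, and the Lipschitz continuity of $\mathcal{K}$ from Lemma~\ref{continuity-calK}), but your treatment of the denominator is a genuinely different --- and cleaner --- decomposition. The paper works directly with $a_v(\mathcal{K}v,\mathcal{K}v)$ and must therefore split the numerator of the combined fraction into three pieces, one of which is the change-of-form term $\beta\int_{\Omega}(|w|^2-|v|^2)|\mathcal{K}v|^2\dx$; this is then controlled by H\"older and Sobolev embeddings, essentially repeating an argument already present in the proof of Lemma~\ref{continuity-calK}. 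You instead exploit the defining identity $a_v(\mathcal{K}v,\mathcal{K}v)=(v,\mathcal{K}v)_{L^2(\Omega)}$ (which the paper itself uses elsewhere, e.g.\ in Lemma~\ref{lemma-mass-con-energy-red}), so that $D(v)-D(w)=(v-w,\mathcal{K}v)_{L^2(\Omega)}+(w,\mathcal{K}v-\mathcal{K}w)_{L^2(\Omega)}$ is a purely $L^2$ computation and the dependence of the bilinear form on its subscript never has to be estimated explicitly --- the nonlinearity is absorbed entirely into Lemma~\ref{continuity-calK}. Your lower bound $D(v)\ge\|\mathcal{K}v\|_{H^1(\Omega)}^2\ge c_M^2$ is exactly what the paper's factor $c_M^{-4}$ encodes for the product of the two denominators; note only that with the standing assumption $V\ge 1$ the coercivity $a_v(u,u)\ge\|u\|_{H^1(\Omega)}^2$ holds with constant one, so the Poincar\'e--Friedrichs constant you mention is not actually needed there. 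The resulting constant $\tilde L_M$ has the asserted dependencies, so there is no gap.
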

\begin{proof}
The error in the difference of $\gamma_v$ and $\gamma_w$ can be expressed as 
\begin{align*}
\left| \frac{(v,v)_{L^2(\Omega)}}{ a_v( \mathcal{K} v , \mathcal{K} v ) } 
-  \frac{(w,w)_{L^2(\Omega)}}{ a_w( \mathcal{K} w , \mathcal{K} w )  }  \right|
&= \left| \frac{(v,v)_{L^2(\Omega)} \hspace{2pt} a_w( \mathcal{K} w , \mathcal{K} w )
-
(w,w)_{L^2(\Omega)} \hspace{2pt} a_v( \mathcal{K} v , \mathcal{K} v ) }{
a_v( \mathcal{K} v , \mathcal{K} v ) \hspace{2pt} a_w( \mathcal{K} w , \mathcal{K} w )
 }  \right|
\end{align*}
and
\begin{align*}
\left| \gamma_v - \gamma_w  \right|
&\le c_M^{-4}  \left| (v,v)_{L^2(\Omega)} \hspace{2pt} a_w( \mathcal{K} w , \mathcal{K} w )
-
(w,w)_{L^2(\Omega)} \hspace{2pt} a_v( \mathcal{K} v , \mathcal{K} v )  \right|.
\end{align*}
Using the splitting
\begin{eqnarray*}
\lefteqn{ (v,v)_{L^2(\Omega)} \hspace{2pt} a_w( \mathcal{K} w , \mathcal{K} w )
-
(w,w)_{L^2(\Omega)} \hspace{2pt} a_v( \mathcal{K} v , \mathcal{K} v ) } \\
&=&  \left( \| v \|^2_{L^2(\Omega)} - \| w \|^2_{L^2(\Omega)}  \right) \hspace{2pt} a_w( \mathcal{K} w , \mathcal{K} w )\\
&\enspace&\quad+
 \| w \|^2_{L^2(\Omega)} \left( a_w( \mathcal{K} w , \mathcal{K} w ) - a_w( \mathcal{K} v , \mathcal{K} v ) \right)
+ \beta \| w \|^2_{L^2(\Omega)} \int_{\Omega} (|w|^2 -|v|^2)  | \mathcal{K} v|^2\dx.
\end{eqnarray*}
and the norm inequality 
$\| a \|^2 - \| b \|^2 \le \| a - b \| ( \| a \| + \| b \| )$ (which follows from the inverse triangle inequality)
we see that there exists a constant $\tilde{C}_M>0$ such that for all $v,w\in H_{0,M}^1(\Omega)$ it holds
\begin{align*}
\left| \gamma_v - \gamma_w  \right|
&\le \tilde{C}_M \left( \| v - w \|_{H^1(\Omega)} + \| \mathcal{K} v -  \mathcal{K} w \|_{H^1(\Omega)} \right).
\end{align*}
The Lipschitz-continuity of $ \mathcal{K}$ on $H_{0,M}^1(\Omega)$ as shown in  Lemma~\ref{continuity-calK} finishes the proof.
\end{proof}

The combination of Lemmas \ref{continuity-calK} and \ref{gamma-continuity} shows that the right-hand side  $g(z):=-z + \gamma_z  \mathcal{G}_{z} z$ of the gradient flow problem of Definition~\ref{definition-GF-az} is Lipschitz-continuous in the neighborhood $H_{0,M}^1(\Omega)$ of $z_0$. Thus, the classical Picard-Lindel\"of Theorem for Hilbert spaces implies local existence and uniqueness for some time $T>0$.
\begin{lemma}[Local well-posedness]\label{local-existence-cont-sob-grad}
For any $z_0 \in H^1_0(\Omega)$ with $\| z_0 \|_{L^2(\Omega)}=1$, there exists a maximum time $T>0$ such that there is a unique solution $z$ to \eqref{sobolev-flow-az} on the time interval $[0,T)$.
\end{lemma}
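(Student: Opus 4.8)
The plan is to read the flow \eqref{sobolev-flow-az} as an autonomous ordinary differential equation $z'(t) = g(z(t))$ with $z(0) = z_0$ posed in the Hilbert space $H^1_0(\Omega)$, whose right-hand side is $g(z) = -z + \gamma_z \mathcal{K}z$ with $\mathcal{K}z = \mathcal{G}_z z$, and then to invoke the Picard--Lindel\"of (Cauchy--Lipschitz) theorem for ordinary differential equations in Banach spaces. Because all the analytic work has been front-loaded into Lemmas~\ref{continuity-calK} and \ref{gamma-continuity}, the remaining task is essentially bookkeeping: assemble the pieces into a single Lipschitz bound for $g$ on a closed neighborhood of $z_0$, and apply the abstract existence theorem.

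First I would verify that $g$ is Lipschitz continuous on the closed ball $H_{0,M}^1(\Omega)$ for sufficiently small $M>0$. The linear part $-z$ is Lipschitz with constant $1$. For the nonlinear part I would use the standard product splitting: for $v,w \in H_{0,M}^1(\Omega)$,
\[
\| \gamma_v \mathcal{K}v - \gamma_w \mathcal{K}w \|_{H^1(\Omega)} \le |\gamma_v|\,\| \mathcal{K}v - \mathcal{K}w \|_{H^1(\Omega)} + |\gamma_v - \gamma_w|\,\| \mathcal{K}w \|_{H^1(\Omega)}.
\]
Here $|\gamma_v|$ is bounded by Lemma~\ref{gamma-continuity} and $\| \mathcal{K}w \|_{H^1(\Omega)}$ by the bound \eqref{bound-for-calK} of Lemma~\ref{continuity-calK}, while the Lipschitz estimates of those same two lemmas control $\| \mathcal{K}v - \mathcal{K}w \|_{H^1(\Omega)}$ and $|\gamma_v - \gamma_w|$ by $\| v - w \|_{H^1(\Omega)}$. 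Combining these gives a single Lipschitz constant $L = L(M, z_0, \beta, V, \Omega, d)$ for $g$ on $H_{0,M}^1(\Omega)$.

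Second I would invoke Picard--Lindel\"of. Since $H^1_0(\Omega)$ is a Banach space and $g$ is Lipschitz on the closed neighborhood $H_{0,M}^1(\Omega)$ of $z_0$, the initial value problem $z'(t) = g(z(t))$, $z(0) = z_0$, admits a unique solution $z \in C^1([0,T_0); H^1_0(\Omega))$ on an interval whose length $T_0>0$ depends only on $M$ and $L$. A standard continuation argument then extends this local solution to a maximal interval $[0,T)$ with $T>0$, which is the assertion of the lemma.

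I do not expect any serious obstacle at this stage, as the proof reduces to applying a classical theorem once the Lipschitz property is assembled. The only point that genuinely requires the preparatory work — and is already secured in the buildup — is that $\gamma_z$ is well-defined and bounded on the neighborhood: this rests on the lower bound $\| \mathcal{K}v \|_{H^1(\Omega)} \ge c_M$ established for $v \in H_{0,M}^1(\Omega)$, which keeps the denominator $a_z(\mathcal{K}z, \mathcal{K}z)$ bounded away from zero so that $g$ is genuinely a Lipschitz map of $H_{0,M}^1(\Omega)$ into $H^1_0(\Omega)$.
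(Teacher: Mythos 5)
Your proposal is correct and follows essentially the same route as the paper: the paper likewise combines Lemmas~\ref{continuity-calK} and \ref{gamma-continuity} to conclude that $g(z)=-z+\gamma_z\mathcal{G}_z z$ is Lipschitz on the neighborhood $H_{0,M}^1(\Omega)$ of $z_0$ (relying on the lower bound $\|\mathcal{K}v\|_{H^1(\Omega)}\ge c_M$ to keep $\gamma_v$ well-defined) and then invokes the Picard--Lindel\"of theorem in the Hilbert space $H^1_0(\Omega)$. Your explicit product splitting for $\gamma_v\mathcal{K}v-\gamma_w\mathcal{K}w$ merely spells out the bookkeeping the paper leaves implicit.
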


\subsection{Global well-posedness}
Starting from the local existence of $z(t)$ guaranteed by Lemma~\ref{local-existence-cont-sob-grad}, Lemma \ref{lemma-mass-con-energy-red} allows us to pass to a global existence result. Note that as soon as such a global existence result is established, Lemma \ref{lemma-mass-con-energy-red} implies mass conservation and energy-reduction for all times $t\in [0,\infty)$.

\begin{proof}[Proof of Theorem \ref{main-result-1-cont-sob-grad} - Global well-posedness]
Recall that Lemma~\ref{local-existence-cont-sob-grad} guarantees the existence of a unique solution on a time interval $[0,T)$ and assume that $T$ is finite and maximal in the sense that the problem is no longer well-posed for $t\ge T$. The energy reduction in Lemma \ref{lemma-mass-con-energy-red} guarantees that $E_T := \lim_{t \rightarrow T}E(z(t))$ exists. Next, let $t_1,t_2 \in [0,T)$ be arbitrary with $t_1\le t_2$. Then, using the estimate 
$\| \int_{t_1}^{t_2} v(t) \hspace{2pt} dt \|_{H^1(\Omega)} \le \int_{t_1}^{t_2} \| v(t) \|_{H^1(\Omega)} \hspace{2pt} dt$ (cf. \cite[Appendix E.5]{Eva10}) and the construction of $z$ we see that
\begin{eqnarray*}
\lefteqn{\| z(t_2) - z(t_1) \|_{H^1(\Omega)}^2 =\| \int_{t_1}^{t_2} z^{\prime}(t) \hspace{2pt} dt \|_{H^1(\Omega)}^2
\le \left( \int_{t_1}^{t_2} \| z^{\prime}(t)  \|_{H^1(\Omega)} \dt \right)^2} \\
&\le& (t_2 - t_1) \left( \int_{t_1}^{t_2} a_{z(t)}( z^{\prime}(t) , z^{\prime}(t) ) \dt \right)
= (t_2 - t_1) ( E(z(t_2)) - E(z(t_1)) ).
\end{eqnarray*}
This implies boundedness of $z(t)$ in $H^1_0(\Omega)$. Hence, we have the existence of a sequence $\{ t^{n} \}_{n\in \mathbb{N}}$ with $t^n \rightarrow T$ and a function $z_T\in H^1_0(\Omega)$ so that $z(t^{n})\rightharpoonup z_T$ weakly in $H^1_0(\Omega)$. This implies
\begin{align*}
\| z(t^n) \|_{H^1(\Omega)} &\le  \| z_T \|_{H^1(\Omega)}  + \| z(t^n) -  z_T \|_{H^1(\Omega)} \\
&\le  \| z_T \|_{H^1(\Omega)}  + \underset{m \rightarrow \infty}{\mbox{\rm lim inf }}\| z(t^n) -  z(t^m) \|_{H^1(\Omega)}  \\
&\le  \| z_T \|_{H^1(\Omega)}  + \underset{m \rightarrow \infty}{\mbox{\rm lim inf }} \sqrt{(t^m - t^n) ( E(z(t^n)) - E(z(t^m)) )} \\
&=  \| z_T \|_{H^1(\Omega)}  +  \sqrt{(T - t^n) ( E(z(t^n)) - E_T )}.
\end{align*}
Consequently
$$
\lim_{n\rightarrow {\infty}} \| z(t^n) \|_{H^1(\Omega)} \le \| z_T \|_{H^1(\Omega)} + \lim_{n\rightarrow \infty} \sqrt{(T - t^n) ( E(z(t^n)) - E_T) )} = \| z_T \|_{H^1(\Omega)}.
$$
Since the Hilbert space $H^1_0(\Omega)$ is uniformly convex, the weak convergence together with $\lim_{n\rightarrow {\infty}} \| z(t^n) \|_{H^1(\Omega)} \le \| z_T \|_{H^1(\Omega)}$ guarantee that $z(t^{n})\rightarrow z_T$ strongly in $H^1_0(\Omega)$. The continuity of $z$ in $t$ implies independence of this limit on the choice of the sequence $t^n$. Consequently, we have $z(t)\rightarrow z(T) := z_T$ for $t\rightarrow T$, strongly in $H^1_0(\Omega)$. Since we assumed $T<\infty$, we could use $z(T)$ as a new starting value to guarantee existence of $z$ on an extended interval $[0,T+\delta)$ for some $\delta>0$. This contradicts the assumed maximality of $T$ and, hence, shows that the problem admits a unique solution for all times.
\end{proof}

\subsection{Global convergence and exponential decay to the ground state}
With the previous results, we can now prove global $H^1$-convergence of $z(t)$ to a critical point of $E$ that fulfills the normalization constraint. In a first step, we need to make an identification of the limit.
\begin{proof}[Proof of Theorem \ref{main-result-1-cont-sob-grad} - Limit is eigenfunction of the GPE]
It remains to identify the limit as an eigenfunction of the GPE problem. Since the (non-negative) energy is decreasing along the flow there exists some limit
$E_{\infty}:=\lim_{t\rightarrow {\infty}} E(z(t))$. With $a_{z(t)}( z^{\prime}(t) , z^{\prime}(t) ) = - \frac{\mbox{\scriptsize d}}{\mbox{\scriptsize d}t} E(\hspace{1pt}z(t)\hspace{1pt})$  we can conclude that
\begin{align*}
\int_{0}^{\infty} a_{z(t)}( z^{\prime}(t) , z^{\prime}(t) ) \dt = E(\hspace{1pt}z(0)\hspace{1pt}) - E_{\infty} < \infty.
\end{align*}
This implies that $\int_{0}^{\infty} \| z^{\prime} \|^2_{H^1(\Omega)}\dt$ is finite and there exists $z^{\ast}\in H^1_0(\Omega)$
such that $z(t)\rightarrow z^{\ast}$ strongly in $H^1(\Omega)$. Obviously, the limit fulfills $z^{\ast} = \gamma_{z^{\ast}} \mathcal{G}_{z^{\ast}} z^{\ast}$ and, hence,
$$
a_{z^{\ast}}( z^{\ast} , v ) = \gamma_{z^{\ast}} \hspace{1pt}a_{z^{\ast}}( \mathcal{G}_{z^{\ast}} z^{\ast} , v ) = \gamma_{z^{\ast}} ( z^{\ast} , v)_{L^2(\Omega)},
$$
i.e, $(z^{\ast},\gamma_{z^{\ast}})$ is an eigensolution of \eqref{weak-problem}.
\end{proof}
We are now ready to prove the exponential convergence to the ground state.
\begin{proof}[Proof of Theorem \ref{main-result-1-cont-sob-grad} - Exponential convergence to ground state]
Assume that the strong $H^1$-limit of the flow $z(t)$ coincides with the unique positive ground state, i.e. $z^{\ast}=z_{\mbox{\rm\tiny GS}}$, where  $z_{\mbox{\rm\tiny GS}} >0$ is characterized as in Proposition \ref{basic-existence-ground-state}. From the first part of the proof of Theorem \ref{main-result-1-cont-sob-grad} we also know that $\gamma_{z}$ converges to the ground state eigenvalue $\lambda^{\ast}=\lambda_{\mbox{\rm\tiny GS}}>0$ for $t \rightarrow \infty$.

The proof of exponential convergence is based on a Gr\"onwall-type argument. For that, we define the function
\begin{align*}
f(t) := \tfrac{1}{2} a_{z(t)}( z^{\prime}(t), z^{\prime}(t) ),
\end{align*}
and want to show that $f^{\prime} \le c \hspace{2pt} f$ for some positive constant $c$.
Since $2\int_{0}^{\infty} f(t) \dt = E(z_0) -E_{\infty}$ is finite, we know that $f(t)\rightarrow 0$ for $t\rightarrow \infty$ and 
hence $f(t)^{3/2} \le f(t)$ for all sufficiently large times. Using this fact, we can conclude that for any $\eps_0>0$, there exists a finite time $ t(\eps_0)\ge0$ such that for all $t\ge t(\eps_0)$ it holds
\begin{align}
\label{f-asymptotic-delta}
f(t)^{3/2} \le \hspace{1pt}\eps_0 \hspace{1pt} f(t).
\end{align}
Next, recall the projection $P_z$ from \eqref{def-Pz}. With $P_z(z)= z - \gamma_z \mathcal{G}_z z =-z^{\prime}$, we can rewrite 
$f=\tfrac{1}{2} a_{z}( P_z(z) , P_z(z) )$ 
and, hence,
\begin{align}
\label{f-tilde-prime}
f^{\prime} = a_z( \frac{\mbox{d}}{\mbox{d}t} P_z(z), P_z(z) ) +  \frac{1}{2} a_z^{\prime}\langle z^{\prime} , (P_z(z) , P_z(z))  \rangle 
\end{align}
where $\frac{\mbox{d}}{\mbox{d}t} P_z(z)$ is given by
\begin{align}
\label{Frechet-derivative-G}
\frac{\mbox{d}}{\mbox{d}t} P_z(z)= z^{\prime} - \left\langle  \frac{\mbox{d}}{\mbox{dz}} \left( \gamma_z \mathcal{G}_z z \right) , z^{\prime} \right\rangle \in H^1_0(\Omega)
\end{align}
and where $a_z^{\prime} \langle v, (w_1,w_2) \rangle$ is the Fr\'echet derivative of $a_z(\cdot,\cdot)$ wrt. $z$, which can be computed as
$$
a_z^{\prime} \langle v, (w_1,w_2) \rangle = 2 \beta \int_{\Omega} z v w_1 w_2 \dx.
$$
Consequently, we have with \eqref{f-tilde-prime}
\begin{align}
\label{f-tilde-prime-2}
\nonumber
f^{\prime} &= 
- a_z(z^{\prime},z^{\prime}) +
a_z( \left\langle  \frac{\mbox{d}}{\mbox{dz}} \left( \gamma_z \mathcal{G}_z z \right) , z^{\prime} \right\rangle , z^{\prime} )
+   \beta \int_{\Omega} z (z^{\prime})^3 \dx \\
\nonumber&= - 2 f 
+ 
\left\langle  \frac{\mbox{d}}{\mbox{dz}} \left( \gamma_z \right) , z^{\prime} \right\rangle 
a_z( \mathcal{G}_z z   , z^{\prime} )
+
\gamma_z a_z( \left\langle  \frac{\mbox{d}}{\mbox{dz}} \left( \mathcal{G}_z z \right) , z^{\prime} \right\rangle , z^{\prime} )
+   \beta \int_{\Omega} z (z^{\prime})^3 \dx \\
\nonumber&= - 2 f 
+ 
\left\langle  \frac{\mbox{d}}{\mbox{dz}} \left( \gamma_z \right) , z^{\prime} \right\rangle 
( z   , z^{\prime} )_{L^2(\Omega)}
+
\gamma_z a_z( \left\langle  \frac{\mbox{d}}{\mbox{dz}} \left( \mathcal{G}_z z \right) , z^{\prime} \right\rangle , z^{\prime} )
+   \beta \int_{\Omega} z (z^{\prime})^3 \dx\\
\nonumber&= - 2 f 
+
\gamma_z a_z( \left\langle  \frac{\mbox{d}}{\mbox{dz}} \left( \mathcal{G}_z z \right) , z^{\prime} \right\rangle , z^{\prime} )
+   \beta \int_{\Omega} z (z^{\prime})^3 \dx \\
\nonumber&\le - 2 f 
+
\gamma_z a_z( \left\langle  \frac{\mbox{d}}{\mbox{dz}} \left( \mathcal{G}_z z \right) , z^{\prime} \right\rangle , z^{\prime} ) + \beta \| z^{\prime} \|^3_{L^6(\Omega)} \\
&\le - 2 f 
+
\gamma_z a_z( \left\langle  \frac{\mbox{d}}{\mbox{dz}} \left( \mathcal{G}_z z \right) , z^{\prime} \right\rangle , z^{\prime} ) + C(\Omega,V,\beta,z_0)  f^{3/2},
\end{align}
where we used the Sobolev embedding $H^1_0(\Omega) \hookrightarrow L^6(\Omega)$ (for $d\le3$) in the last step. 
Next, we investigate the middle term. We use
\begin{align*}
( \cdot , w )_{L^2(\Omega)} = \langle \frac{\mbox{d}}{\mbox{d}z} a_z(  \mathcal{G}_z z , w ) , \cdot \rangle
= a_z^{\prime}\langle  \cdot, ( \mathcal{G}_z z , w) \rangle +  a_z( \left\langle \frac{\mbox{d}}{\mbox{d}z}  \mathcal{G}_z z , \cdot \right\rangle , w ) 
\end{align*}
to see that
\begin{align*}
a_z( \left\langle  \frac{\mbox{d}}{\mbox{dz}} \left( \mathcal{G}_z z \right) , z^{\prime} \right\rangle , z^{\prime} )
= \| z^{\prime} \|_{L^2(\Omega)}^2 - a_z^{\prime}\langle  z^{\prime}, ( \mathcal{G}_z z , z^{\prime} ) \rangle.
\end{align*}
The latter term can be written as
\begin{eqnarray*}
\lefteqn{a_z^{\prime}\langle  z^{\prime}, ( \mathcal{G}_z z , z^{\prime} ) \rangle}\\
&=& 2 \beta \int_{\Omega} |z^{\prime}|^2  (z-z_{\mbox{\rm\tiny GS}}) \hspace{2pt} \mathcal{G}_z z + |z^{\prime}|^2 z_{\mbox{\rm\tiny GS}} \hspace{2pt} (\mathcal{G}_z z - \mathcal{G}_{z_{\mbox{\rm\tiny GS}}} z_{\mbox{\rm\tiny GS}})  + |z^{\prime}|^2 z_{\mbox{\rm\tiny GS}} \hspace{2pt} \mathcal{G}_{z_{\mbox{\rm\tiny GS}}} z_{\mbox{\rm\tiny GS}} \dx ,
\end{eqnarray*}
where the first two terms are of higher order (due to the strong $H^1$-convergence to $z_{\mbox{\rm\tiny GS}}$) and the last term is strictly positive. Hence, for any $\eps_1>0$ and sufficiently large times $t\ge t(\eps_1)$, we have the crude estimate
\begin{align}
\label{crdest}
a_z( \left\langle  \frac{\mbox{d}}{\mbox{dz}} \left( \mathcal{G}_z z \right) , z^{\prime} \right\rangle , z^{\prime} )
= \| z^{\prime} \|_{L^2(\Omega)}^2 - a_z^{\prime}\langle  z^{\prime}, ( \mathcal{G}_z z , z^{\prime} ) \rangle \le \| z^{\prime} \|_{L^2(\Omega)}^2 + \eps_1 \hspace{2pt} f.
\end{align}
The crucial estimate is now for $\| z^{\prime} \|_{L^2(\Omega)}^2$. First, we note that
\begin{align}
\label{equality-zprime-az}
a_z( z^{\prime} , z^{\prime} ) 
& = - a_z( z , z^{\prime} )  = a_z( z , z )  - \gamma_z  a_z( z , \mathcal{G}_{z} z  )  = a_z( z , z )  - \gamma_z  (  z , z )_{L^2(\Omega)}  \rightarrow 0
\end{align}
for $t\rightarrow \infty$, which shows that $z^{\prime}(t)$ converges strongly in $H^1$ to zero. Furthermore, we have
\begin{align*}
\frac{a_z(z^{\prime},z^{\prime})}{ \| z^{\prime} \|_{L^2(\Omega)}^2} \ge 
\frac{ \| \nabla z^{\prime} \|_{L^2(\Omega)} }{ \| z^{\prime} \|_{L^2(\Omega)}^2} \ge C_{\Omega} > 0,
\end{align*}
where $C_{\Omega}$ is the Poincar\'e-Friedrichs constant on $\Omega$. 
We want to derive a sharper estimate for 
$$
C_{\inf} := \underset{t\rightarrow \infty}{\mbox{lim inf}} \hspace{2pt} \frac{a_z(z^{\prime},z^{\prime})}{ \| z^{\prime} \|_{L^2(\Omega)}^2}.
$$
Assume that $(t_n)_{n\in \mathbb{N}}$ with $t_n\rightarrow \infty$ is a corresponding minimal sequence so that $C_{\inf}$ is reached. In this case we have
\begin{align}
\label{zprimelowerbound}
\lim_{n\rightarrow \infty} \frac{a_z(z^{\prime}(t_n),z^{\prime}(t_n))}{ \| z^{\prime}(t_n) \|_{L^2(\Omega)}}  =C_{\inf} \| z^{\prime}(t_n) \|_{L^2(\Omega)} \rightarrow 0.
\end{align}
Note that $z^{\prime}(t_n)$ is also bounded in $H^1(\Omega)$ and hence we can assume without loss of generality that there exists a weak limit $\hat{z} \in H^1_0(\Omega)$ with $\| \hat{z} \|_{L^2(\Omega)} =1$ such that for $n\rightarrow \infty$
$$
\frac{z^{\prime}(t_n)}{ \| z^{\prime}(t_n) \|_{L^2(\Omega)}} \rightharpoonup \hat{z}
\quad \mbox{weakly in } H^1(\Omega).
$$
Together with the strong $H^1$-convergence of $z(t)$ to the ground state, this implies for $n\rightarrow \infty$
\begin{eqnarray*}
\lefteqn{\lambda_{\mbox{\rm\tiny GS}} (z_{\mbox{\rm\tiny GS}} , \hat{z} )_{L^2(\Omega)} = a_{z_{\mbox{\rm\tiny GS}}}(z_{\mbox{\rm\tiny GS}} , \hat{z} ) \longleftarrow  a_{z(t_n)}( z(t_n) , \frac{z^{\prime}(t_n)}{ \| z^{\prime}(t_n) \|_{L^2(\Omega)}}  )}\\
&\overset{\eqref{equality-zprime-az}}{=}&
- a_{z(t_n)}( z^{\prime}(t_n), \frac{z^{\prime}(t_n)}{ \| z^{\prime}(t_n) \|_{L^2(\Omega)}}  )
\overset{\eqref{zprimelowerbound}}{\longrightarrow} 0.\hspace{100pt}
\end{eqnarray*}
Hence, the function $\hat{z}$ is orthogonal to the ground state $z_{\mbox{\rm\tiny GS}} $ both with respect to the $L^2$- and the $a_{z_{\mbox{\rm\tiny GS}}}(\cdot,\cdot)$-inner product. We conclude (with the lower semi-continuity of weakly converging sequences) that
$$
C_{\inf} =\lim_{n\rightarrow \infty} \frac{a_z(z^{\prime}(t_n),z^{\prime}(t_n))}{ \| z^{\prime}(t_n) \|^2_{L^2(\Omega)}} 
\ge a_{z_{\mbox{\rm\tiny GS}}}(\hat{z} , \hat{z})
\ge \inf_{v \hspace{1pt} \in \hspace{1pt} \mbox{\footnotesize span}\{ z_{\mbox{\rm\tiny GS}} \}^{\perp} } \frac{a_{z_{\mbox{\rm\tiny GS}}}(v,v)}{ \| v \|^2_{L^2(\Omega)}}
$$
where $\mbox{span}\{ z_{\mbox{\rm\tiny GS}} \}^{\perp}$ is the $a_{z_{\mbox{\rm\tiny GS}}}(\cdot,\cdot)$-orthogonal complement of the first eigenspace. Hence, with the Courant-Fischer Theorem we have 
$$
C_{\inf} \ge \inf_{v \hspace{1pt} \in \hspace{1pt} \mbox{\footnotesize span}\{ z_{\mbox{\rm\tiny GS}} \}^{\perp} } \frac{a_{z_{\mbox{\rm\tiny GS}}}(v,v)}{ \| v \|^2_{L^2(\Omega)}} \ge \mu_2,
$$
where $\mu_2>\mu_1:=\lambda_{\mbox{\rm\tiny GS}}$ is the second eigenvalue of linear eigenvalue problem: find $w_i \in  H^1_0(\Omega)$ with $\| w_i \|_{L^2(\Omega)}=1$ and $\mu_i \in \R$
$$
a_{z_{\mbox{\rm\tiny GS}}}( w_i , v ) = \mu_i ( w_i , v )_{L^2(\Omega)} \qquad \mbox{for all } v \in H^1_0(\Omega). 
$$
Here we exploited that $\mu_1=\lambda_{\mbox{\rm\tiny GS}}$ is the smallest eigenvalue of the linearized problem and that it is also simple (cf. \cite[Lemma 2]{CCM10}). We can summarize that 
$$
 \underset{t\rightarrow \infty}{\mbox{lim inf}} \hspace{2pt} \frac{a_z(z^{\prime},z^{\prime})}{ \| z^{\prime} \|_{L^2(\Omega)}^2} \ge \mu_2
$$
and hence, for all $\eps_2>0$ there is a sufficiently large time $t \ge t(\eps_2)$ such that
\begin{align}
\label{est-zprime-L2}
\gamma_z \| z^{\prime} \|_{L^2(\Omega)}^2 
\le  (1+\eps_2) \lambda_{\mbox{\rm\tiny GS}} \frac{a_z(z^{\prime},z^{\prime})}{\mu_2} = 2(1+\eps_2) \frac{\lambda_{\mbox{\rm\tiny GS}}}{\mu_2} f.
\end{align}
Here we used that $\gamma_z \rightarrow \lambda_{\mbox{\rm\tiny GS}}$. Combining \eqref{f-tilde-prime-2}, \eqref{f-asymptotic-delta}, \eqref{crdest} and  \eqref{est-zprime-L2} we have
\begin{align*}
f^{\prime}(t) &\le - 2 f(t)
+
2(1+\eps_2) \frac{\lambda_{\mbox{\rm\tiny GS}}}{\mu_2} f(t) + \eps_1  \lambda_{\mbox{\rm\tiny GS}} \hspace{2pt} f(t)
+ C(\Omega,V,\beta,z_0) \eps_0 f(t).
\end{align*}
Selecting $\eps_0$, $\eps_1$ and $\eps_3$ sufficiently small and the corresponding times sufficiently large, we see that for any $\delta>0$ there exists a finite time $t_{\delta}$ such that for all $t\ge t_{\delta}$
\begin{align*}
f^{\prime}(t) &\le 2( 1 - \frac{\lambda_{\mbox{\rm\tiny GS}}}{\mu_2} - \delta ) f(t).
\end{align*}
By Gr\"onwall's lemma we obtain
$$
f(t) \le f( t_{\delta} ) e^{2 (1 - \frac{\lambda_{\mbox{\rm\tiny GS}}}{\mu_2} - \delta )(t-t_{\delta})} = f(t_{\delta}) e^{t_{\delta}} e^{-2t (1 - \frac{\lambda_{\mbox{\rm\tiny GS}}}{\mu_2} - \delta )}.
$$
Hence, for every $\delta>0$ there exists a constant $c_{\delta}$ and a finite time $t_{\delta}$, such that 
$$
f(t) \le c_{\delta}  e^{-2t (1 - \delta - \lambda_{\mbox{\rm\tiny GS}}/\mu_2 )} \qquad \mbox{for all } t \ge t_{\delta}. 
$$
Finally, we obtain for $0<\delta<1$ that
\begin{eqnarray*}
\lefteqn{a_0 ( z_{\mbox{\rm\tiny GS}}  - z(t) , z_{\mbox{\rm\tiny GS}}  - z(t) )}\\
&=& a_0\left( \int_{t}^{\infty} z^{\prime}(s)\hspace{2pt} \ds  , \int_{t}^{\infty} z^{\prime}(s) \ds  \right)
\le \left( \int_{t}^{\infty} a_0(z^{\prime}(s),z^{\prime}(s))^{1/2} \right)^2 \ds  \\
&\le& 2 \left( \int_{t}^{\infty} f(s)^{1/2} \ds \right)^2 
\le 2 c_{\delta} \left( \int_{t}^{\infty} e^{-s (1 - \delta - \lambda_{\mbox{\rm\tiny GS}}/\mu_2  ) } \ds \right)^2  \\
&\le& 8 c_{\delta} \hspace{2pt} e^{-2t (1 - \delta - \lambda_{\mbox{\rm\tiny GS}}/\mu_2  ) }.
\end{eqnarray*}
This finishes the proof.
\end{proof}

\section{Discrete Projected $a_z$-Sobolev Gradient Flow}
\label{sec:discrete:grad:flow}

In this section we propose and analyze a forward Euler discretization of the projected $a_z$-Sobolev gradient flow from Definition \ref{def-weighted-gradient-flow}. For this purpose, let $\{ \tau_n \}_{n\in \mathbb{N}}$ be a sequence of positive time steps that is bounded from above and below by
$$
0 < \tau_{\mbox{\tiny\rm min}} \le \tau_n \le \tau_{\mbox{\tiny\rm max}} < \infty.
$$
The time steps can be seen as parameters that should be selected sufficiently large for the sake of computational efficiency. In the following, we use the simplifying notation and write 
$$
\mathcal{G}^n z^n:= \mathcal{G}_{z^n} z^n
\qquad
\mbox{and}
\qquad
\gamma^n := \frac{(z^n, z^n)_{L^2(\Omega)}}{a_{z^n}( \mathcal{G}^n z^n ,\mathcal{G}^n z^n) }. 
$$
With this, we consider the following forward Euler discretization of the continuous $a_z$-gradient flow.
\begin{definition}[Method: GF$a_z$]
\label{definition-GF-az}
Let $z^0 \in H^1_0(\Omega)$ be given with $\| z^0 \|_{L^2(\Omega)}=1$. Then for $n\ge0$ the GF$a_z$-iteration $z^{n+1} \in H^1_0(\Omega)$ is defined as 
\begin{align}
\label{GFaz-eqn}
\hat{z}^{n+1} = (1 - \tau_n) z^n + \tau_n \gamma^n \mathcal{G}^n z^n 
\qquad\mbox{and}
\qquad
z^{n+1} = \frac{\hat{z}^{n+1}}{\| \hat{z}^{n+1} \|_{L^2(\Omega)}}.
\end{align}
\end{definition}
Since $z^n \in H^1_0(\Omega)$ and $\mathcal{G}^n z^n \in H^1_0(\Omega)$, the iterates are well-defined.

\begin{remark}[Nonlinear inverse iteration]
For the particular choice $\tau_n=1$ the iteration can be rewritten as
\begin{align*}
z^{n+1} = \frac{\mathcal{G}^n z^n }{\| \mathcal{G}^n z^n \|_{L^2(\Omega)}},
\end{align*}
which is the simplest form of the nonlinear inverse iteration (inverse power method). In this sense, GF$a_z$ is a generalized inverse iteration.
\end{remark}
We emphasize that a (near) optimal $\tau_n$ can be cheaply computed by (nearly) minimizing the energy $E(z^{n+1})$ as a function of $\tau_n$ along the given search direction as described in the following remark.
\begin{remark}[Adaptive GF$a_z$]
\label{adaptive-GFaz}
The proposed method GF$a_z$ can be easily combined with an adaptive step size control to compute optimal values for $\tau_n$ in each step. This involves the minimization of the function
$$f(\tau_n):=  E\left( \frac{(1 - \tau_n) z^n + \tau_n \gamma^n \mathcal{G}^n z^n} {\| (1 - \tau_n) z^n + \tau_n \gamma^n \mathcal{G}^n z^n \|_{L^2(\Omega)}} \right)
$$
w.r.t. $\tau_n\in (0,2)$. This can be done efficiently. Let us define
\begin{align*}
\alpha_0 &:= \int_{\Omega} |\nabla z^n|^2 + V |z^n|^2\dx,\quad \alpha_1 := 2 \gamma^n \int_{\Omega} \nabla z^n \cdot \nabla \mathcal{G}^n z^n + V \hspace{2pt} z^n \hspace{2pt} \mathcal{G}^n z^n\dx,\\
\alpha_2 &:= |\gamma^n|^2 \int_{\Omega} |\nabla \mathcal{G}^n z^n|^2 + V |\mathcal{G}^n z^n|^2\dx,
\end{align*}
and
\begin{align*}
\beta_0 &:= \frac{\beta}{2} \int_{\Omega} |z^n|^4\dx, \quad \beta_1 := 2 \beta \int_{\Omega}  (z^n)^3  \gamma^n \mathcal{G}^n z^n\dx ,\\ 
 \beta_2 &:= 3 \beta \int_{\Omega} |z^n|^2 |\gamma^n|^2 |\mathcal{G}^n z^n |^2\dx,\\
\beta_3 &:= \beta 2 \int_{\Omega}  z^n (\gamma^n)^3 (\mathcal{G}^n z^n )^3\dx, \quad \beta_4 :=  \frac{\beta}{2} \int_{\Omega} (\gamma^n)^4 |\mathcal{G}^n z^n |^4\dx
\end{align*}
and also
\begin{align*}
\zeta_0 &:= \int_{\Omega} |z^n|^2\dx,  \quad \zeta_1 := 2 \gamma^n \int_{\Omega}  z^n \hspace{2pt} \mathcal{G}^n z^n\dx \quad \mbox{and} \quad \zeta_2 := |\gamma^n|^2 \int_{\Omega} |\mathcal{G}^n z^n|^2\dx.
\end{align*}
The terms $\alpha_i$, $\beta_i$ and $\zeta_i$ have to be precomputed only once per time step (with a single grid walk). With these terms and the function
$$
s^n(\tau_n) := \Biggl( \sum_{i,j\ge 0:\;i+j=2} (1-\tau_n)^i \tau_n^j \zeta_{j} \Biggr)^{-1/2}
$$
we can see that $f(\tau_n)$ is given by 
\begin{align*}
f(\tau_n) =
\frac{1}{2} \Biggl( \sum_{i,j\ge 0:\;i+j=2} |s^n(\tau_n)|^2 (1-\tau_n)^i \tau_n^j \alpha_{j}
+ \sum_{i,j\ge 0:\;i+j=4} |s^n(\tau_n)|^4 (1-\tau_n)^i \tau_n^j \beta_{j} \Biggr).
\end{align*}
This quantity can be evaluated cheaply once that $\alpha_i$, $\beta_i$ and $\zeta_i$ were precomputed. The minimization of $f(\tau_n)$ on $(0,2)$ using e.g. golden section search leads to the (approximate) minimum $f(\tau_n^{\ast})$. The energy of $z^{n+1}$ is then given by $f(\tau_n^{\ast})$. Note that even without adaptivity,  the quantity $f(\tau_n)$ has to be computed, which is of the same order of complexity as the preprocessing step in the adaptive version. Hence, the computational overhead for using adaptivity is negligible. In particular, no additional linear system needs to be solved when using adaptivity with GF$a_z$. In contrast we observe that GF$L^2$ can typically not be efficiently combined with adaptivity, however, this also not as crucial as for the other methods since any sufficiently large choice for $\tau_n=\tau$ yields automatically a nearly optimal number of iterations for GF$L^2$ in terms of $\tau$.
\end{remark}
The remaining parts of this section are devoted to the numerical analysis of this scheme. 

\subsection{Intermediate mass growth} 
While conservation of mass is guaranteed by normalization in each step of the iteration, it is worth studying the change of mass that is associated with the map $z^n\mapsto \hat{z}^{n+1}$. It will turn out that mass cannot be diminished under this operation. 
Multiplying equation \eqref{GFaz-eqn} with $z^n$ and integrating over $\Omega$ yields
\begin{align}
\label{mass-equality-1}
\frac{1}{\tau_n} ( \hat{z}^{n+1} - z^n, z^n)_{L^2(\Omega)} = - ( z^n , z^n )_{L^2(\Omega)} + \frac{(z^n, z^n)_{L^2(\Omega)}}{(z^n,\mathcal{G}^n z^n)_{L^2(\Omega)} } (\mathcal{G}^n z^n, z^n )_{L^2(\Omega)} = 0.
\end{align}
This implies 
\begin{align*}
(\hat{z}^{n+1} - z^n, \hat{z}^{n+1} - z^n)_{L^2(\Omega)} 
&= (\hat{z}^{n+1} , \hat{z}^{n+1} )_{L^2(\Omega)} - 2 (\hat{z}^{n+1} , z^{n} )_{L^2(\Omega)} + (z^{n} , z^{n} )_{L^2(\Omega)} \\
&= (\hat{z}^{n+1} , \hat{z}^{n+1} )_{L^2(\Omega)} - 2 (z^{n} , z^{n} )_{L^2(\Omega)} + (z^{n} , z^{n} )_{L^2(\Omega)}.
\end{align*}
Hence $1=\| z^{n} \|_{L^2(\Omega)}^2 \le  \| z^{n} \|_{L^2(\Omega)}^2 + \| \hat{z}^{n+1} - z^n\|_{L^2(\Omega)}^2  = \| \hat{z}^{n+1} \|_{L^2(\Omega)}^2$. We summarize it:
\begin{lemma}[Intermediate mass growth]
For all $n$ it holds $1 = \| z^n \|_{L^2(\Omega)} \le  \| \hat{z}^{n+1} \|_{L^2(\Omega)}$.
Furthermore, the normalization error can be expressed as
\begin{align*}
 \| \hat{z}^{n+1} \|_{L^2(\Omega)} - 1 
= \frac{( z^n - z^{n+1} , z^n)_{L^2(\Omega)}}{ ( z^{n+1} , z^n)_{L^2(\Omega)}}.
\end{align*}
\end{lemma}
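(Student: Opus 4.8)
The plan is to build everything on the single orthogonality relation already recorded in \eqref{mass-equality-1}. First I would test the update $\hat z^{n+1}=(1-\tau_n)z^n+\tau_n\gamma^n\mathcal G^n z^n$ against $z^n$ in $L^2(\Omega)$. Using that the definition of the Green's operator gives $a_{z^n}(\mathcal G^n z^n,\mathcal G^n z^n)=(z^n,\mathcal G^n z^n)_{L^2(\Omega)}$, the definition of $\gamma^n$ together with $\|z^n\|_{L^2(\Omega)}=1$ yields $\gamma^n(z^n,\mathcal G^n z^n)_{L^2(\Omega)}=1$, so that $\tau_n\gamma^n(\mathcal G^n z^n,z^n)_{L^2(\Omega)}=\tau_n$ and hence $(\hat z^{n+1},z^n)_{L^2(\Omega)}=(1-\tau_n)+\tau_n=1=(z^n,z^n)_{L^2(\Omega)}$. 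Equivalently, the increment is $L^2$-orthogonal to $z^n$, i.e.\ $(\hat z^{n+1}-z^n,z^n)_{L^2(\Omega)}=0$. The first assertion is then immediate from the Pythagorean identity applied to the orthogonal splitting $\hat z^{n+1}=z^n+(\hat z^{n+1}-z^n)$, which gives $\|\hat z^{n+1}\|_{L^2(\Omega)}^2=\|z^n\|_{L^2(\Omega)}^2+\|\hat z^{n+1}-z^n\|_{L^2(\Omega)}^2\ge 1$; this is exactly the computation displayed just above the statement.

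For the normalization-error identity I would introduce the shorthand $\rho:=\|\hat z^{n+1}\|_{L^2(\Omega)}$, so that the renormalization step in \eqref{GFaz-eqn} reads $\hat z^{n+1}=\rho\,z^{n+1}$. Inserting this into the relation $(\hat z^{n+1},z^n)_{L^2(\Omega)}=1$ gives $\rho\,(z^{n+1},z^n)_{L^2(\Omega)}=1$, hence $\rho=1/(z^{n+1},z^n)_{L^2(\Omega)}$. Subtracting $1$ and replacing the $1$ in the numerator by $(z^n,z^n)_{L^2(\Omega)}$ produces
\[
\|\hat z^{n+1}\|_{L^2(\Omega)}-1=\frac{1-(z^{n+1},z^n)_{L^2(\Omega)}}{(z^{n+1},z^n)_{L^2(\Omega)}}=\frac{(z^n-z^{n+1},z^n)_{L^2(\Omega)}}{(z^{n+1},z^n)_{L^2(\Omega)}},
\]
which is precisely the claimed formula.

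Since the argument is essentially a two-line rearrangement once the orthogonality relation is in hand, I expect no serious obstacle; the only point deserving a word of care is that the division by $(z^{n+1},z^n)_{L^2(\Omega)}$ be legitimate. This is automatic: from $\|\hat z^{n+1}\|_{L^2(\Omega)}\ge 1>0$ and $\rho\,(z^{n+1},z^n)_{L^2(\Omega)}=1$ one reads off $(z^{n+1},z^n)_{L^2(\Omega)}=1/\rho\in(0,1]$, so the denominator is strictly positive whenever the iterate $z^{n+1}$ is well defined, which was already guaranteed after Definition~\ref{definition-GF-az}. As a consistency check, both sides of the identity are then nonnegative, in agreement with the first part of the lemma.
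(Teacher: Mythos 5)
Your proposal is correct and follows essentially the same route as the paper: you derive the orthogonality $(\hat z^{n+1}-z^n,z^n)_{L^2(\Omega)}=0$ by testing the update with $z^n$ and then apply the Pythagorean identity, which is exactly the computation the paper displays just before the lemma. Your derivation of the normalization-error formula from $\|\hat z^{n+1}\|_{L^2(\Omega)}(z^{n+1},z^n)_{L^2(\Omega)}=1$ (a step the paper leaves implicit) is also correct, including the justification that the denominator is strictly positive.
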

The previous lemma implies that the normalization of $\hat{z}^{n}$ to unit mass necessarily decreases the energy. Moreover, if the mass is not increased, i.e. $\| \hat{z}^{n+1} \|_{L^2(\Omega)}= \| z^{n} \|_{L^2(\Omega)}$, then this implies that $z^n = \hat{z}^{n+1} = z^{n+1}$.

\subsection{Energy dissipation}
The proof of energy reduction is established in several steps. First, using the result from the previous subsection, applying the energy inner product $a_{z^n} (\cdot, \cdot)$ to \eqref{GFaz-eqn} and testing with $\hat{z}^{n+1} - z^n$ yields
\begin{align}
\label{pseudo-energy-identity}
\nonumber\frac{1}{\tau_n} a_{z^n} (\hat{z}^{n+1} - z^n , \hat{z}^{n+1} - z^n ) &= - a_{z^n} ( z^n , \hat{z}^{n+1} - z^n ) + \gamma^n a_{z^n} ( \mathcal{G}^n z^n , \hat{z}^{n+1} - z^n ) \\
\nonumber&= - a_{z^n} ( z^n , \hat{z}^{n+1} - z^n ) + \gamma^n ( z^n , \hat{z}^{n+1} - z^n )_{L^2(\Omega)} \\
&\overset{\eqref{mass-equality-1}}{=} - a_{z^n} ( z^n , \hat{z}^{n+1} - z^n ).
\end{align}
This leads to a preliminary lower bound for the energy difference.
\begin{lemma}[Sharp lower bounds for the energy difference]
\label{sharp-lower-bounds-lemma}
If $\tau_n\le 2/5$ then
\begin{equation}
\label{energy-difference}E(z^n)  - E(\hat{z}^{n+1} ) 
\ge- \int_{\Omega} \tfrac{\beta}{2} |\hat{z}^{n+1} - z^{n}|^4 \dx+ \biggl(\frac{1}{\tau_n} - \tfrac{1}{2}\biggr) a_0( \hat{z}^{n+1} - z^{n}  , \hat{z}^{n+1} - z^n ).
\end{equation}
For $\tau_n \ge 2$, either $E(\hat{z}^{n+1})>E(z^{n+1})$ or $z^n$ is  already a critical point.
\end{lemma}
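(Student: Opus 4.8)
The plan for the first bound is to compute $E(z^n) - E(\hat z^{n+1})$ explicitly, insert the pseudo-energy identity \eqref{pseudo-energy-identity}, and reduce the claim to a pointwise quadratic inequality. Writing $E(v) = \tfrac12 a_0(v,v) + \tfrac{\beta}{4}\|v\|_{L^4(\Omega)}^4$ and abbreviating $w := \hat z^{n+1} - z^n$, the quadratic part of the difference contributes $-a_0(z^n,w) - \tfrac12 a_0(w,w)$, while the binomial expansion of $\int_\Omega(z^n+w)^4\dx$ produces the monomials $4(z^n)^3w$, $6(z^n)^2w^2$, $4z^n w^3$ and $w^4$. To handle the cross term I would use $a_{z^n}(z^n,w) = a_0(z^n,w) + \beta\int_\Omega(z^n)^3 w\dx$ together with \eqref{pseudo-energy-identity} to write $-a_0(z^n,w) = \tfrac{1}{\tau_n}a_{z^n}(w,w) + \beta\int_\Omega(z^n)^3 w\dx$. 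The key point is that this cubic term cancels exactly against the $-\beta\int_\Omega(z^n)^3 w\dx$ arising from the quartic expansion, so no term linear in $w$ survives.

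After this cancellation, substituting $a_{z^n}(w,w) = a_0(w,w) + \beta\int_\Omega(z^n)^2 w^2\dx$ and collecting terms I expect
\[
E(z^n)-E(\hat z^{n+1}) = \Bigl(\tfrac1{\tau_n}-\tfrac12\Bigr)a_0(w,w) + \beta\Bigl(\tfrac1{\tau_n}-\tfrac32\Bigr)\!\int_\Omega (z^n)^2 w^2\dx - \beta\!\int_\Omega z^n w^3\dx - \tfrac{\beta}{4}\!\int_\Omega w^4\dx .
\]
The asserted bound \eqref{energy-difference} is then equivalent to the nonnegativity of $\beta\int_\Omega\bigl[(\tfrac1{\tau_n}-\tfrac32)(z^n)^2 w^2 - z^n w^3 + \tfrac14 w^4\bigr]\dx$. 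Since $\beta\ge0$, it suffices to show the integrand is pointwise nonnegative, and factoring out $w^2$ leaves the quadratic $a(z^n)^2 - z^n w + \tfrac14 w^2$ in the variable $z^n$, with $a := \tfrac1{\tau_n}-\tfrac32$. Its discriminant equals $w^2(1-a)$, which is nonpositive precisely when $a\ge1$, i.e. $\tfrac1{\tau_n}\ge\tfrac52$, i.e. $\tau_n\le\tfrac25$; together with $a\ge1>0$ this makes the quadratic nonnegative and establishes the first claim. This discriminant (completion-of-square) computation, which pins down the exact threshold $\tfrac25$, is where I expect the delicate bookkeeping to lie.

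For the second claim I would argue via the normalization step rather than the intermediate step. Setting $s := \|\hat z^{n+1}\|_{L^2(\Omega)}$, the intermediate mass growth lemma gives $s\ge1$, with $s=1$ if and only if $\hat z^{n+1}=z^n$, which by \eqref{GFaz-eqn} forces $\gamma^n\mathcal{G}^n z^n = z^n$, i.e. $z^n$ is an eigenfunction of \eqref{weak-problem} (a critical point). Since $\hat z^{n+1}=s\,z^{n+1}$ and $E$ is quadratic in its $a_0$-part and quartic in its $L^4$-part, homogeneity yields
\[
E(\hat z^{n+1}) - E(z^{n+1}) = \tfrac{s^2-1}{2}\,a_0(z^{n+1},z^{n+1}) + \tfrac{s^4-1}{4}\,\beta\|z^{n+1}\|_{L^4(\Omega)}^4 \ge \tfrac{s^2-1}{2}\,a_0(z^{n+1},z^{n+1}) .
\]
Because $V\ge1$ gives $a_0(z^{n+1},z^{n+1})\ge\|z^{n+1}\|_{L^2(\Omega)}^2=1>0$, the right-hand side is strictly positive whenever $s>1$, so either $E(\hat z^{n+1})>E(z^{n+1})$ or $z^n$ is already a critical point. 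I note that this argument in fact uses only $s\ge1$ and not $\tau_n\ge2$; the hypothesis $\tau_n\ge2$ merely marks the regime in which the coefficient $\tfrac1{\tau_n}-\tfrac12$ of the intermediate bound from the first part turns nonpositive, so that the strict energy decrease must instead be extracted from the normalization alone.
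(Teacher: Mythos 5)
Your argument is correct for both claims, and for the first inequality it is in substance the same computation as the paper's, reorganized. The paper tests \eqref{pseudo-energy-identity} a second time to get $a_{z^n}(z^n,z^n)=a_{z^n}(\hat z^{n+1},\hat z^{n+1})+(\tfrac{2}{\tau_n}-1)\,a_{z^n}(w,w)$ for $w:=\hat z^{n+1}-z^n$, rewrites the two $a_{z^n}$-norms as energies plus quartic corrections, and lands on
\begin{equation*}
E(z^n)-E(\hat z^{n+1})=-\tfrac{\beta}{4}\int_{\Omega}\bigl(|\hat z^{n+1}|^2-|z^n|^2\bigr)^2\dx+\bigl(\tfrac{1}{\tau_n}-\tfrac12\bigr)a_{z^n}(w,w),
\end{equation*}
which after expanding $(2z^nw+w^2)^2$ and splitting $a_{z^n}=a_0+\beta\int_\Omega|z^n|^2(\cdot)^2\dx$ is precisely your four-term identity. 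The two proofs then diverge only in the final estimate: the paper uses $(\hat z^{n+1}+z^n)^2=(w+2z^n)^2\le 2w^2+8(z^n)^2$ and then drops the nonnegative term $(\tfrac1{\tau_n}-\tfrac52)\int_\Omega w^2(z^n)^2\dx$, whereas you check the pointwise nonnegativity of $a(z^n)^2-z^nw+\tfrac14w^2$ with $a=\tfrac1{\tau_n}-\tfrac32$ via its discriminant $w^2(1-a)$. Both give the threshold $2/5$; your route additionally shows that $2/5$ is exactly where the pointwise inequality degenerates to a perfect square, so the constant cannot be improved by this kind of pointwise argument.

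Regarding the second claim: you correctly observe that, as literally stated, it is independent of $\tau_n$, and your scaling argument $E(s\,z^{n+1})-E(z^{n+1})\ge\tfrac{s^2-1}{2}a_0(z^{n+1},z^{n+1})>0$ for $s=\|\hat z^{n+1}\|_{L^2(\Omega)}>1$ (with $s=1$ forcing $\hat z^{n+1}=z^n$ and hence criticality of $z^n$) proves it. Be aware, though, that the paper's own proof establishes a different and more meaningful fact, namely $E(\hat z^{n+1})\ge E(z^n)$ for $\tau_n\ge2$, obtained by noting that both terms on the right-hand side of the displayed identity are then nonpositive; this is what justifies calling $\tau_n\ge2$ a divergent regime. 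If that is the intended reading, your normalization argument does not deliver it, but your identity does: for $a\le-1$ the downward-opening quadratic $a(z^n)^2-z^nw-\tfrac14w^2$ has maximum $-\tfrac{w^2}{4}(1+\tfrac1a)\le0$, and $\tfrac1{\tau_n}-\tfrac12\le0$, so $E(z^n)-E(\hat z^{n+1})\le0$.
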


\begin{proof}
Set $\tau:=\tau_n$. We get
\begin{eqnarray*}
\lefteqn{ a_{z^n}(\hat{z}^{n+1} - z^n , \hat{z}^{n+1} - z^n ) = a_{z^n}(\hat{z}^{n+1}, \hat{z}^{n+1} )  - 2 a_{z^n}(z^{n} , \hat{z}^{n+1}) + a_{z^n}(z^{n} , z^{n})  }\\
&=&
a_{z^n}(\hat{z}^{n+1}, \hat{z}^{n+1} ) 
- 2 a_{z^n}(z^{n} , \hat{z}^{n+1} - z^n )
- a_{z^n}(z^{n} , z^{n})  \\
&\overset{\eqref{pseudo-energy-identity}}{=}&
a_{z^n}(\hat{z}^{n+1}, \hat{z}^{n+1} ) 
+ \tfrac{2}{\tau} a_{z^n}( \hat{z}^{n+1} - z^{n}  , \hat{z}^{n+1} - z^n )
- a_{z^n}(z^{n} , z^{n}),
\end{eqnarray*}
which implies
\begin{eqnarray*}
a_{z^n}(z^{n} , z^{n})
&=&
a_{z^n}(\hat{z}^{n+1}, \hat{z}^{n+1} ) 
+ \bigl(\tfrac{2}{\tau} - 1\bigr) a_{z^n}( \hat{z}^{n+1} - z^{n}  , \hat{z}^{n+1} - z^n ).
\end{eqnarray*}
Observe that 
$$
a_{z^n}(z^{n} , z^{n})= 2 E(z^n) + \tfrac{\beta}{2} \int_{\Omega} |z^{n}|^4\dx
$$
and
\begin{align*}
a_{z^n}(\hat{z}^{n+1}, \hat{z}^{n+1} ) = 2 E(\hat{z}^{n+1}) + \tfrac{\beta}{2} \hspace{-2pt}\int_{\Omega} |\hat{z}^{n+1}|^2 ( |z^{n}|^2 \hspace{-2pt}-\hspace{-2pt}  |\hat{z}^{n+1}|^2) \dx + \tfrac{\beta}{2} \hspace{-2pt}\int_{\Omega} |z^{n}|^2 |\hat{z}^{n+1}|^2\dx.
\end{align*}
Combining everything yields
\begin{eqnarray*}
 E(z^n)  - E(\hat{z}^{n+1} ) 
&=&
- \tfrac{\beta}{4} \int_{\Omega} (|\hat{z}^{n+1}|^2 - |z^{n}|^2)^2 \dx
+ \bigl(\tfrac{1}{\tau} - \tfrac{1}{2}\bigr) a_{z^n}( \hat{z}^{n+1} - z^{n}  , \hat{z}^{n+1} - z^n ) \\
&=&
- \tfrac{\beta}{4} \int_{\Omega} (|\hat{z}^{n+1}|^2 - |z^{n}|^2)^2 \dx
+ \bigl(\tfrac{1}{\tau} - \tfrac{1}{2}\bigr) \beta \int_{\Omega} |z^n|^2 |\hat{z}^{n+1} - z^{n}|^2\dx\\
&\enspace& \qquad
+ \bigl(\tfrac{1}{\tau} - \tfrac{1}{2}\bigr) a_0( \hat{z}^{n+1} - z^{n}  , \hat{z}^{n+1} - z^n ) \\
&=&
- \tfrac{\beta}{4} \int_{\Omega} |\hat{z}^{n+1}+ z^{n}|^2  |\hat{z}^{n+1} - z^{n}|^2\dx 
+ \bigl(\tfrac{1}{\tau} - \tfrac{1}{2}\bigr) \beta \int_{\Omega} |z^n|^2 |\hat{z}^{n+1} - z^{n}|^2\dx\\
&\enspace& \qquad+ \bigl(\tfrac{1}{\tau} - \tfrac{1}{2}\bigr) a_0( \hat{z}^{n+1} - z^{n}  , \hat{z}^{n+1} - z^n ) \\
&=&
\beta \int_{\Omega}  |\hat{z}^{n+1} - z^{n}|^2  \left( - \tfrac{1}{4} |\hat{z}^{n+1}+ z^{n}|^2 +  \bigl(\tfrac{1}{\tau} - \tfrac{1}{2}\bigr) |z^n|^2 \right)\dx\\
&\enspace& \qquad+ \bigl(\tfrac{1}{\tau} - \tfrac{1}{2}\bigr) a_0( \hat{z}^{n+1} - z^{n}  , \hat{z}^{n+1} - z^n ).
\end{eqnarray*}
For $\tau\ge2$, the right hand side is negative which implies a growth of energy. It only remains to find a lower bound for the first term. Here we estimate
\begin{eqnarray*}
\int_{\Omega}  |\hat{z}^{n+1} - z^{n}|^2  |\hat{z}^{n+1}+ z^{n}|^2\dx
&\le& \int_{\Omega}   2 |\hat{z}^{n+1} - z^{n}|^4 + 
8 |\hat{z}^{n+1} - z^{n}|^2 |z^n|^2\dx,
\end{eqnarray*}
which yields
\begin{eqnarray*}
\lefteqn{\int_{\Omega}  |\hat{z}^{n+1} - z^{n}|^2  \left( - \tfrac{1}{4} |\hat{z}^{n+1}+ z^{n}|^2 +  \bigl(\tfrac{1}{\tau} - \tfrac{1}{2}\bigr) |z^n|^2 \right) \dx}\\
&\ge& - \int_{\Omega} \tfrac{1}{2} |\hat{z}^{n+1} - z^{n}|^4 \dx + \int_{\Omega} \bigl(\tfrac{1}{\tau} - \tfrac{5}{2}\bigr) |\hat{z}^{n+1} - z^{n}|^2 |z^n|^2 \dx
\ge - \int_{\Omega} \tfrac{1}{2} |\hat{z}^{n+1} - z^{n}|^4\dx,
\end{eqnarray*}
where we used $\tau\le 2/5$. This finishes the proof.
\end{proof}
\begin{remark}[Adaptive time steps]
	Observe that if the time steps $\tau_n$ are chosen adaptively, then asymptotically any choice $\tau_n < 2$ is admissible. This however requires that the previous time steps (with typically smaller step size) were such that the iterates $z^n$ are in a sufficiently small neighbourhood of a critical point. This is because in the convergent regime, the first term in \eqref{energy-difference} (which is of order four) is eventually negligible, compared to the dominant second term which is only of second order. We will not exploit this observation, but believe that it is worth mentioning.
\end{remark}

With Lemma \ref{sharp-lower-bounds-lemma} we can now prove energy reduction for sufficiently small time steps.
\begin{lemma}[Energy reduction]\label{lemma-energy-reduction-euler}
There exists $0<\tau_{\mbox{\rm\tiny max}}<2$ (which depends on $\beta$ and $E(z^0)$) such that for all $\tau_n\le \tau_{\mbox{\rm\tiny max}}$ 
$$
E( z^{n+1} ) \le E({\hat{z}^{n+1}}) \le E(z^{n}).
$$
\end{lemma}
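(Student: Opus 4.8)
The plan is to prove the two inequalities separately and to close the argument by an induction on $n$ that keeps the energy bounded by $E(z^0)$. The first inequality, $E(z^{n+1}) \le E(\hat{z}^{n+1})$, is the easier one: since $z^{n+1} = \hat{z}^{n+1}/\| \hat{z}^{n+1}\|_{L^2(\Omega)}$ with $\| \hat{z}^{n+1}\|_{L^2(\Omega)} \ge 1$ by the intermediate mass growth lemma, scaling $\hat{z}^{n+1}$ down by a factor $s \ge 1$ multiplies the quadratic part $\tfrac12 a_0(\cdot,\cdot)$ of $E$ by $s^{-2} \le 1$ and the quartic part $\tfrac{\beta}{4}\| \cdot\|_{L^4(\Omega)}^4$ by $s^{-4} \le 1$; as $\beta \ge 0$, both contributions decrease, so normalization can only reduce the energy.

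For the second inequality, $E(\hat{z}^{n+1}) \le E(z^n)$, I would start from the sharp lower bound of Lemma \ref{sharp-lower-bounds-lemma}, which for $\tau_n \le 2/5$ gives
$$
E(z^n) - E(\hat{z}^{n+1}) \ge -\tfrac{\beta}{2}\| e\|_{L^4(\Omega)}^4 + \bigl(\tfrac{1}{\tau_n}-\tfrac12\bigr)\, a_0(e,e), \qquad e := \hat{z}^{n+1}-z^n.
$$
The obstacle is the negative quartic term, which is only of order four in $e$ whereas the favourable term is of order two. The decisive observation is that $e$ is itself of size $O(\tau_n)$: indeed $e = \tau_n w$ with $w := \gamma^n \mathcal{G}^n z^n - z^n$, so $a_0(e,e) = \tau_n^2\, a_0(w,w)$. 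Using the Sobolev embedding $H^1_0(\Omega)\hookrightarrow L^4(\Omega)$ (for $d\le 3$) together with $V\ge 1$, which yields $\| e\|_{H^1(\Omega)}^2 \le a_0(e,e)$, the quartic term is controlled by $\tfrac{\beta}{2}\| e\|_{L^4(\Omega)}^4 \le \tfrac{\beta}{2}C_S^4\, a_0(e,e)^2$, so that
$$
E(z^n)-E(\hat{z}^{n+1}) \ge a_0(e,e)\Bigl[\bigl(\tfrac{1}{\tau_n}-\tfrac12\bigr) - \tfrac{\beta}{2}C_S^4\, a_0(e,e)\Bigr].
$$
It therefore suffices to make the bracket nonnegative by a uniform bound on $a_0(e,e)$.

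To obtain such a bound, I would invoke the pseudo-energy identity \eqref{pseudo-energy-identity}. Combined with $\gamma^n>0$ and $\beta\ge0$ it yields $a_0(w,w) \le a_{z^n}(w,w) = a_{z^n}(z^n,z^n) - \gamma^n \le a_{z^n}(z^n,z^n) = 2E(z^n) + \tfrac{\beta}{2}\| z^n\|_{L^4(\Omega)}^4$. Bounding $\| z^n\|_{L^4(\Omega)}^4 \le C_S^4\, a_0(z^n,z^n)^2 \le C_S^4\,(2E(z^n))^2$ once more by Sobolev shows $a_0(e,e) = \tau_n^2\, a_0(w,w) \le \tau_n^2 K$ with a constant $K=K(\beta,C_S,E(z^n))$ that is monotone in $E(z^n)$. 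This is where the induction enters: assuming $E(z^n)\le E(z^0)$ (the base case $E(z^0)\le E(z^0)$ being trivial), the constant may be taken as $K=K(\beta,C_S,E(z^0))$, independent of $n$, so that the bracket is at least $\tfrac{1}{\tau_n}-\tfrac12-\tfrac{\beta}{2}C_S^4 K\tau_n^2$. Since this expression tends to $+\infty$ as $\tau_n\to0$ while the subtracted term vanishes, there exists $\tau_{\mbox{\rm\tiny max}}\in(0,2/5]$, depending only on $\beta$ and $E(z^0)$ through $K$, such that it is nonnegative for all $\tau_n\le\tau_{\mbox{\rm\tiny max}}$. Then $E(\hat{z}^{n+1})\le E(z^n)\le E(z^0)$, and the first inequality gives $E(z^{n+1})\le E(\hat{z}^{n+1})\le E(z^0)$, which closes the induction and proves the claim. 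The main difficulty is exactly this bootstrap: the increment bound needed to tame the quartic term presupposes the very energy bound one is trying to establish, so the induction must be arranged so that $K$, and hence $\tau_{\mbox{\rm\tiny max}}$, depend only on the initial energy $E(z^0)$ and not on the step index.
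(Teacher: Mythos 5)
Your proposal is correct and follows essentially the same route as the paper: both arguments start from the lower bound of Lemma \ref{sharp-lower-bounds-lemma}, absorb the quartic term $\tfrac{\beta}{2}\|\hat z^{n+1}-z^n\|_{L^4(\Omega)}^4$ into $a_0(\hat z^{n+1}-z^n,\hat z^{n+1}-z^n)$ via the Sobolev embedding after showing the increment is of size $O(\tau_n^2 E(z^n))$, use $\|\hat z^{n+1}\|_{L^2(\Omega)}\ge 1$ for the normalization step, and close the loop by induction so that the constant (and hence $\tau_{\mbox{\rm\tiny max}}$) depends only on $E(z^0)$. The only cosmetic difference is that you bound the increment through the exact identity $a_{z^n}(w,w)=a_{z^n}(z^n,z^n)-\gamma^n$ rather than the paper's direct Cauchy--Schwarz estimate $a_0(\hat z^{n+1}-z^n,\hat z^{n+1}-z^n)\le 4\tau_n^2E(z^n)$, which is immaterial.
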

\begin{remark}[Energy reduction for $\tau_n=1$]\label{remark-tau-1}
Numerically, we could observe the coupling between $\tau_{\mbox{\rm\tiny max}}$ and the energy of $z^0$ at several occasions, i.e. if $E(z_0)$ was large then the step size $\tau_n$ had to be reduced to obtain reduction of the energy. However, we never observed that $\tau_{\mbox{\rm\tiny max}}$ dropped below one. In this connection we shall note that, for $\tau_n=1$, the GF$a_{z}$ can be seen as a GF$L^2$ realization applied to the Schr\"odinger operator whose spectrum was shifted by $+1$. Consider the GPE with the modified potential $V_{\mbox{\tiny\rm mod}}:=V -1$. Applying GF$L^2$ to this modified problem gives the same iterations as applying the GF$a_{z}$ iterations to the GPE with original potential (for the particular choice $\tau_n=1$). Hence, both methods produce the same approximations $z^n$. Using the results obtained in \cite[Lemma 2.10]{BaD04} for GF$L^2$ with $\tau_n=1$ we can hence argue that the GF$a_{z}$ iterates are guaranteed to reduce a functional of the form $\tilde{E}(v)=E(v)+\tfrac{\beta}{4}\| v \|_{L^4(\Omega)}^4$. Since $\lambda = 2E(u) + \tfrac{\beta}{2}\| u \|_{L^4(\Omega)}^4=2\tilde{E}(v)$, this can be seen as minimizing an \quotes{eigenvalue functional} instead of the original energy functional.
\end{remark}
\begin{proof}[Proof of Lemma \ref{lemma-energy-reduction-euler} (by induction)]
{\it Step $n=0$}: With \eqref{pseudo-energy-identity} we have
\begin{align*}
a_0(\hat{z}^1 - z^0 , \hat{z}^1 - z^0 ) \le \tau_0^2 4 E(z^0).
\end{align*}
If $\tau_0^2 \le (4 E(z^0))^{-1}$, we have $a_0(\hat{z}^1 - z^0 , \hat{z}^1 - z^0 ) \le 1$ and hence
\begin{align*}
\int_{\Omega} \tfrac{\beta}{2} |\hat{z}^1 - z^{0}|^4\dx
\lesssim \beta a_0(\hat{z}^1 - z^0 , \hat{z}^1 - z^0 )^2
\le \beta a_0(\hat{z}^1 - z^0 , \hat{z}^1 - z^0 )
\end{align*}
Together with \eqref{energy-difference}, we conclude 
\begin{eqnarray*}
E(z^0)  - E(\hat{z}^1 ) \ge (\tfrac{1}{\tau_0} - \tfrac{1}{2} - C \beta) a_0( \hat{z}^1 - z^{0}  , \hat{z}^1 - z^0 ).
\end{eqnarray*}
Hence, there exists $\tau_{\mbox{\rm\tiny max}} \lesssim \min\{ \beta^{-1} , E(z^0)^{-1/2} \} $ such that for all $\tau_0\le \tau_{\mbox{\rm\tiny max}}$ we have
\begin{eqnarray*}
E(z^0)  - E(\hat{z}^1 )  \ge 0 \qquad \overset{\| \hat{z}^1\|_{L^2(\Omega)}\ge1 }{\Rightarrow}
\qquad  E(z^1 ) \le  E(\hat{z}^1 ) \le E(z^0).
\end{eqnarray*}
\noindent {\it Step $n\mapsto n+1$}: Let $E(z^{n} ) \le E(z^{0} )$ and $\tau_n\le \tau_{\mbox{\rm\tiny max}}$ with $\tau_{\mbox{\rm\tiny max}}$ as for $n=0$. Using \eqref{pseudo-energy-identity} and $E(z^{n} ) \le E(z^{0} )$
we have
\begin{align*}
a_0({\hat{z}^{n+1}} - z^n , {\hat{z}^{n+1}} - z^n ) \le \tau_n^2 4 E(z^n) \le \tau_n^2 4 E(z^0) \le 1.
\end{align*}
Analogously as for $n=0$, we have
\begin{align*}
\int_{\Omega} \tfrac{\beta}{2} |{\hat{z}^{n+1}} - z^{n}|^4\dx
\lesssim \beta \hspace{2pt} a_0({\hat{z}^{n+1}} - z^n , {\hat{z}^{n+1}} - z^n )
\end{align*}
and, hence,
\begin{eqnarray}
\label{Energy-diff-by-H1-diff}
E(z^n)  - E({\hat{z}^{n+1}} )  \ge c_{\tau} a_0( {\hat{z}^{n+1}} - z^{n}  , {\hat{z}^{n+1}} - z^{n} ) \ge 0.
\end{eqnarray}
Note that $c_{\tau} \rightarrow \infty$ for $\tau \rightarrow 0$.
\end{proof}

\subsection{Global convergence}
We have the following main result on the global convergence of the discrete gradient flow.
\begin{theorem}\label{lemma-weak-convergence-Euler}
We consider the GF$a_z$-approach stated in Definition \ref{definition-GF-az}.
Assume that the time steps fulfill $\tau_n\le \tau_{\mbox{\rm\tiny max}}$ as in Lemma \ref {lemma-energy-reduction-euler} and that they are non-degenerate in the sense that $\tau_n \ge \tau_{\mbox{\rm\tiny min}} > 0$. Then there exists a limit energy 
$E^{\ast}:=\lim_{n\rightarrow \infty} E(z^n)$. 
Furthermore, there exists a subsequence $\{ z^{n_i} \}_{i \in \mathbb{N}}$ of $\{ z^n \}_{n \in \mathbb{N}}$, such that 
$z^{n_i}\rightarrow z^{\ast}$ strongly 
in $H^1_0(\Omega)$ to some limit $z^{\ast}\in H^1_0(\Omega)$ with $\| z^{\ast} \|_{L^2(\Omega)} =1$ and $E(z^{\ast}) = E^{\ast}$.
With 
$$\lambda^{\ast}:=
\| \mathcal{G}^{\ast} z^{\ast} \|_{L^2(\Omega)}^{-1}
=\lim_{i\rightarrow\infty} \gamma^{n_i},$$
we have that $z^{\ast}$ is an eigenfunction to the Gross-Pitaevskii equation and fulfills 
\begin{align*}
a_{z^{\ast}} (z^{\ast} , v ) = \lambda^{\ast} (z^{\ast} , v )_{L^2(\Omega)} \qquad \mbox{for all } v \in H^1_0(\Omega).
\end{align*}
Any other convergent subsequence of $\{ z^n \}_{n \in \mathbb{N}}$ will also converge strongly in $H^1(\Omega)$ to an $L^2$-normalized eigenfunction of the GPE with energy level $E^{\ast}$. However, the corresponding eigenvalue might be different from $\lambda^{\ast}$ above.
\end{theorem}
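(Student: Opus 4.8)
The plan is to couple the monotone energy decay with a compactness argument that promotes weak $H^1$-limits to strong ones. First I would record the limit energy and $H^1$-boundedness. By Lemma~\ref{lemma-energy-reduction-euler} the values $E(z^n)$ are non-increasing, and since $E\ge0$ they converge to some $E^{\ast}$. Because $V\ge1$ and $\beta\ge0$, the energy dominates the norm, $\tfrac12\|z^n\|_{H^1(\Omega)}^2\le E(z^n)\le E(z^0)$, so $\{z^n\}$ is bounded in $H^1_0(\Omega)$. Reflexivity yields a subsequence $z^{n_i}\rightharpoonup z^{\ast}$ weakly in $H^1$, and the compact embeddings $H^1_0(\Omega)\hookrightarrow\hookrightarrow L^2(\Omega),L^4(\Omega)$ (valid for $d\le3$) give strong $L^2$- and $L^4$-convergence; in particular $\|z^{\ast}\|_{L^2(\Omega)}=\lim_i\|z^{n_i}\|_{L^2(\Omega)}=1$.

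The decisive preliminary estimate is that the \emph{full} sequence satisfies $z^n-\gamma^n\mathcal{G}^n z^n\to0$ in $H^1$. I would obtain this by telescoping \eqref{Energy-diff-by-H1-diff}: since $E(\hat z^{n+1})\ge E(z^{n+1})$ and the constant $c_{\tau_n}$ there is decreasing in $\tau_n$, hence bounded below by $c:=c_{\tau_{\mbox{\rm\tiny max}}}>0$ whenever $\tau_n\le\tau_{\mbox{\rm\tiny max}}$, summation gives $\sum_n a_0(\hat z^{n+1}-z^n,\hat z^{n+1}-z^n)\le c^{-1}(E(z^0)-E^{\ast})<\infty$, whence $\|\hat z^{n+1}-z^n\|_{H^1(\Omega)}\to0$ (using $a_0(v,v)\ge\|v\|_{H^1(\Omega)}^2$). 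Since $\hat z^{n+1}-z^n=\tau_n(\gamma^n\mathcal{G}^n z^n-z^n)$ by \eqref{GFaz-eqn}, the lower bound $\tau_n\ge\tau_{\mbox{\rm\tiny min}}>0$ then forces $\|z^n-\gamma^n\mathcal{G}^n z^n\|_{H^1(\Omega)}\le\tau_{\mbox{\rm\tiny min}}^{-1}\|\hat z^{n+1}-z^n\|_{H^1(\Omega)}\to0$ along the whole sequence.

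The heart of the proof is the weak-to-strong upgrade. I would first show $\mathcal{G}^{n_i}z^{n_i}\to\mathcal{G}^{\ast}z^{\ast}$ strongly in $H^1$: writing $w^i:=\mathcal{G}^{n_i}z^{n_i}$ and $w^{\ast}:=\mathcal{G}^{\ast}z^{\ast}$, subtracting the two defining variational problems and testing with $w^i-w^{\ast}$ gives
\[
a_{z^{\ast}}(w^i-w^{\ast},w^i-w^{\ast})=(z^{n_i}-z^{\ast},w^i-w^{\ast})_{L^2(\Omega)}-\beta\int_{\Omega}(|z^{n_i}|^2-|z^{\ast}|^2)\,w^i\,(w^i-w^{\ast})\dx.
\]
The left-hand side controls $\|w^i-w^{\ast}\|_{H^1(\Omega)}^2$ by uniform coercivity ($V\ge1$), while both right-hand terms vanish thanks to strong $L^2$- and $L^4$-convergence of $z^{n_i}$ and the uniform bound $\|w^i\|_{H^1(\Omega)}\le C$ from \eqref{bound-for-calK} with $\|z^{n_i}\|_{L^2(\Omega)}=1$. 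Consequently $(z^{n_i},\mathcal{G}^{n_i}z^{n_i})_{L^2(\Omega)}\to(z^{\ast},\mathcal{G}^{\ast}z^{\ast})_{L^2(\Omega)}>0$, so the normalization factors converge, $\gamma^{n_i}\to\gamma^{\ast}:=(z^{\ast},\mathcal{G}^{\ast}z^{\ast})_{L^2(\Omega)}^{-1}$. Combining with the preliminary estimate, $z^{n_i}=(z^{n_i}-\gamma^{n_i}\mathcal{G}^{n_i}z^{n_i})+\gamma^{n_i}\mathcal{G}^{n_i}z^{n_i}\to\gamma^{\ast}\mathcal{G}^{\ast}z^{\ast}$ strongly in $H^1$, and uniqueness of the weak limit forces $z^{\ast}=\gamma^{\ast}\mathcal{G}^{\ast}z^{\ast}$, upgrading to strong convergence $z^{n_i}\to z^{\ast}$. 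Testing $z^{\ast}=\gamma^{\ast}\mathcal{G}^{\ast}z^{\ast}$ against $v$ in $a_{z^{\ast}}$ yields $a_{z^{\ast}}(z^{\ast},v)=\gamma^{\ast}(z^{\ast},v)_{L^2(\Omega)}$, i.e. $z^{\ast}$ solves the GPE; normalization of $z^{\ast}=\gamma^{\ast}\mathcal{G}^{\ast}z^{\ast}$ identifies $\gamma^{\ast}=\|\mathcal{G}^{\ast}z^{\ast}\|_{L^2(\Omega)}^{-1}=\lambda^{\ast}$, and continuity of $E$ gives $E(z^{\ast})=E^{\ast}$.

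For the statement on arbitrary subsequences I would observe that the preliminary estimate $z^n-\gamma^n\mathcal{G}^n z^n\to0$ holds along the entire sequence and that the compactness argument above used nothing specific to the chosen subsequence. Hence any weakly convergent subsequence $z^{m_j}\rightharpoonup\tilde z$ converges strongly to a normalized eigenfunction $\tilde z=\tilde\gamma\,\mathcal{G}_{\tilde z}\tilde z$; since the full energy sequence converges, $E(\tilde z)=\lim_j E(z^{m_j})=E^{\ast}$, while the associated eigenvalue $\tilde\gamma$ may differ from $\lambda^{\ast}$ because distinct eigenfunctions can share the same energy level. I expect the main obstacle to be precisely this weak-to-strong upgrade, and within it the control of the $z$-dependence of the Green's operator $\mathcal{G}_z$ through its nonlinear coefficient $\beta|z|^2$, which is where the compact $L^4$-embedding is essential; the bookkeeping linking $c_{\tau_n}$ to $\tau_{\mbox{\rm\tiny max}}$ and the division by $\tau_{\mbox{\rm\tiny min}}$ is routine by comparison.
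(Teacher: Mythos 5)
Your proposal is correct, and it shares the paper's overall skeleton (monotone energy decay, $H^1$-boundedness, weak subsequential limit plus Rellich--Kondrachov, identification of the limit through the Green's operator), but it handles the decisive weak-to-strong upgrade by a genuinely different mechanism. The paper only establishes \emph{weak} $H^1$-convergence of $\mathcal{G}^{n}z^{n}$ to $\mathcal{G}^{\ast}z^{\ast}$, passes to the limit in the iteration tested against a fixed $v$ to obtain the eigenvalue equation, then proves $E(z^{\ast})=E^{\ast}$ through a separate computation based on the identity $\gamma_n=\tau_n^{-1}a_{z^n}(\hat z^{n+1},z^n)-\tfrac{1-\tau_n}{\tau_n}a_{z^n}(z^n,z^n)$, and only at the very end deduces strong $H^1$-convergence from norm convergence. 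You instead subtract the two variational problems defining $\mathcal{G}^{n_i}z^{n_i}$ and $\mathcal{G}^{\ast}z^{\ast}$, test with the difference, and use coercivity of $a_{z^{\ast}}$ together with the compact $L^2$- and $L^4$-embeddings to get \emph{strong} $H^1$-convergence of $\mathcal{G}^{n_i}z^{n_i}$; combined with your telescoping estimate $\sum_n a_0(\hat z^{n+1}-z^n,\hat z^{n+1}-z^n)<\infty$ and the identity $\hat z^{n+1}-z^n=\tau_n(\gamma^n\mathcal{G}^nz^n-z^n)$ with $\tau_n\ge\tau_{\mbox{\rm\tiny min}}$, this yields strong $H^1$-convergence of $z^{n_i}$ \emph{before} the eigenfunction equation is identified, after which $E(z^{\ast})=E^{\ast}$ follows for free from continuity of $E$ on $H^1$. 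Your route buys a cleaner logical order (strong convergence first, then everything else by continuity) and avoids the paper's somewhat involved energy-identification computation; the paper's route uses less machinery at the Green's-operator stage (only weak convergence there) at the cost of the extra algebra at the end. Both arguments are sound, and your treatment of the remaining points (positivity of $(z^{\ast},\mathcal{G}^{\ast}z^{\ast})_{L^2(\Omega)}$ for the convergence of $\gamma^{n_i}$, the uniform lower bound $c_{\tau_{\mbox{\rm\tiny max}}}$ on the constant in \eqref{Energy-diff-by-H1-diff}, and the statement about arbitrary convergent subsequences) is accurate.
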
 
\begin{remark}\label{rem:linear}
Note that 
$\mathcal{G}^{\ast}(z^{\ast}) = \frac{1}{\lambda^{\ast}}z^{\ast}$. 
\end{remark}
Recall that the limit energy in Theorem \ref{lemma-weak-convergence-Euler} depends crucially on $z^0$, but potentially it can also depend on the choice of the sequence $\{\tau_n\}_{n \in \mathbb{N}}$. If there exists only one eigenfunction (up to normalization and multiplication with $-1$) for the energy level $E^{\ast}$, then we have convergence of the full sequence in Theorem \ref{lemma-weak-convergence-Euler}.
\begin{proof}[Proof of Theorem~\ref{lemma-weak-convergence-Euler}]
As $E(z^n)$ is a monotonically decreasing sequence the limit $E^{\ast}:=\lim_{n\rightarrow \infty} E(z^n)$ exists. This means that $\{z^n\}_{n\in \mathbb{N}}$ is a bounded sequence in $H^1_0(\Omega)$ from which we can extract a subsequence, for brevity still denoted by $\{z^n\}_{n\in \mathbb{N}}$, that converges weakly in $H^1_0(\Omega)$ to some limit function $z^{\ast} \in H^1_0(\Omega)$ with $\| z^{\ast} \|_{L^2(\Omega)}=1$. For space dimension $d\le 3$ the Rellich-Kondrachov theorem guarantees that $z^n$ converges to $z^{\ast}$, strongly in $L^p(\Omega)$ for $p<6$. First, we note that \eqref{Energy-diff-by-H1-diff} implies that $\| \hat{z}^{n+1} - z^n\|_{L^2(\Omega)} \rightarrow 0$ and consequently
\begin{align*}
\| (1-\tau_n)z^n + \tau_n \gamma^n \mathcal{G}^n z^n \|_{L^2(\Omega)} =
\| \hat{z}^{n+1} \|_{L^2(\Omega)} \overset{n \rightarrow \infty}{\longrightarrow} 1.
\end{align*}
Furthermore, it holds for any $v \in H^1_0(\Omega)$
\begin{align*}
a_{z^n} ( \mathcal{G}^n z^n ,v ) = ( z^{n} , v )_{L^2(\Omega)}  \overset{n \rightarrow \infty}{\longrightarrow} ( z^{\ast} , v )_{L^2(\Omega)} = 
a_{z^{\ast}} ( \mathcal{G}^{\ast} z^{\ast} ,v ).
\end{align*}
Using the aforementioned Rellich-Kondrachov embedding we have that $|z^{n}|^2 \rightarrow |z^{\ast}|^2$ strongly in $L^2(\Omega)$ and hence
\begin{align*}
a_{z^{\ast}} ( \mathcal{G}^n z^n ,v ) \overset{n \rightarrow \infty}{\longrightarrow} 
a_{z^{\ast}} ( \mathcal{G}^{\ast} z^{\ast} ,v ).
\end{align*}
The later equation implies that $\mathcal{G}^n z^n$ converges weakly in $H^1_0(\Omega)$ (and strongly in $L^2(\Omega)$) to $ \mathcal{G}^{\ast} z^{\ast}$.
Combining the strong $L^2$-convergence of $\mathcal{G}^n z^n$ and $z^n$ we obtain
\begin{align*}
(\gamma^n)^{-1} =
a_{z^n}( \mathcal{G}^n z^n ,\mathcal{G}^n z^n) =
 ( \mathcal{G}^n z^n , z^n)_{L^2(\Omega)} \overset{n \rightarrow \infty}{\longrightarrow}   ( \mathcal{G}^{\ast} z^{\ast} ,z^{\ast} )_{L^2(\Omega)} =: (\gamma^{\ast})^{-1}.
\end{align*}
Combining all the results we can use
$$
z^{n+1} = \frac{(1-\tau_n)z^n + \tau_n \gamma^n \mathcal{G}^n z^n}{\| (1-\tau_n)z^n + \tau_n \gamma^n \mathcal{G}^n z^n \|_{L^2(\Omega)}}
$$
and pass to the limit for any $v\in H^1_0(\Omega)$ in
\begin{align*}
0 \longleftarrow \hspace{3pt}&\tau_n^{-1} a_{z^{\ast}}( \| (1-\tau_n)z^n + \tau_n \gamma^n \mathcal{G}^n z^n \|_{L^2(\Omega)} z^{n+1} - z^n , v ) \\
&=  a_{z^{\ast}}( - z^{n} + \gamma^n \mathcal{G}^n z^n, v ) \longrightarrow - a_{z^{\ast}}( z^{\ast} , v ) + \gamma^{\ast}  a_{z^{\ast}}( \mathcal{G}^{\ast} z^{\ast}, v ).
\end{align*}
Thus, $a_{z^{\ast}}( z^{\ast} , v ) = \gamma^{\ast} ( z^{\ast} , v)_{L^2(\Omega)}$ for all $v\in H^1_0(\Omega)$.
To verify the convergence of the energy, i.e. $E^{\ast}=E(z^{\ast})$, observe that
$$
\gamma_n = \tau_n^{-1} a_{z^n}(\hat{z}^{n+1} , z^{n} ) - \tfrac{1 - \tau_n}{\tau_n} a_{z^n}( z^n , z^n).
$$
Using this expression, we have
\begin{eqnarray*}
\lefteqn{2 \hspace{2pt} | E(z^{\ast}) - E(z^n)| = \left| \gamma^{\ast} - \tfrac{\beta}{2} \int_{\Omega} |z^{\ast}|^4 \dx
- a_{z^n}(z^n,z^n) + \tfrac{\beta}{2} \int_{\Omega} |z^{n}|^4 \dx\right|} \\
&=& \left| \gamma^{\ast} - \gamma_n + \gamma_n - \tfrac{\beta}{2} \int_{\Omega} |z^{\ast}|^4 \dx
- a_{z^n}(z^n,z^n) + \tfrac{\beta}{2} \int_{\Omega} |z^{n}|^4 \dx\right| \\
&\le&  | \gamma^{\ast} - \gamma_n| + \tfrac{\beta}{2} \int_{\Omega} \bigl| |z^{n}|^4 - |z^{\ast}|^4  \bigr|\dx \\
&\enspace&\qquad + | \tau_n^{-1} a_{z^n}(\hat{z}^{n+1} , z^{n} ) - \tfrac{1 - \tau_n}{\tau_n} a_{z^n}( z^n , z^n) - a_{z^n}(z^n,z^n) | \\
&=& | \gamma^{\ast} - \gamma_n| + \tfrac{\beta}{2} \int_{\Omega}\bigl| |z^{n}|^4 - |z^{\ast}|^4  \bigr|\dx  + \tau_n^{-1} |  a_{z^n}(\hat{z}^{n+1} - z^n , z^{n} )|\\
&\overset{\eqref{Energy-diff-by-H1-diff}}{\le}&  | \gamma^{\ast} - \gamma_n| + \tfrac{\beta}{2} \int_{\Omega} \bigl| |z^{n}|^4 - |z^{\ast}|^4  \bigr|\dx + C(\tau_{\mbox{\tiny\rm min}}, \tau_{\mbox{\tiny\rm max}},z^0) \sqrt{ E( \hat{z}^{n+1}) - E(z^n) }.
\end{eqnarray*}
For all terms on the right-hand side we verified (strong) convergence. Consequently we have 
$$
| E(z^{\ast}) - E^{\ast}|=\lim_{n\rightarrow \infty}| E(z^{\ast}) - E(z^n)|=0.
$$
The strong convergence of $z^n$ in $H^1(\Omega)$ follows readily from the previous result as it implies $\lim_{n\rightarrow \infty} \| z^n \|_{H^1(\Omega)} =  \| z^{\ast} \|_{H^1(\Omega)}$.
\end{proof}
It is easily seen that all proofs in this section remain valid, if we replace the space $H^1_0(\Omega)$ in the GF$a_z$-approach by a finite dimensional subspace, e.g. in a spatial finite element discretization.
\begin{corollary}[Convergence of the fully discrete GF$a_z$]
Let $V_h \subset H^1_0(\Omega)$ be a finite dimensional subspace and let $\mathcal{G}_{z}^h(z_h) \in V_h$ solve
$a_z( \mathcal{G}_{z}^h(z_h) , v_h ) = ( z_h , v_h )_{L^2(\Omega)}$ for all $v_h \in V_h$. For $z_{h}^0 \in V_h$ with $\| z_{h}^0\|_{L^2(\Omega)}=1$ we consider the GF$a_z$ iteration
\begin{align*}
\hat{z}_h^{n+1} = (1 - \tau) z_h^n + \tau \hspace{2pt} (z^n_h,\mathcal{G}_{z^n_h}^{h} z^n_h)_{L^2(\Omega)}^{-1} \hspace{2pt} \mathcal{G}_{z^n_h}^{h} z_h^n 
\qquad\mbox{and}
\qquad
z_h^{n+1} = \frac{\hat{z}_h^{n+1}}{\| \hat{z}_h^{n+1} \|_{L^2(\Omega)}}.
\end{align*}
If $\tau \le \tau_{\mbox{\rm\tiny max}}$ then the energy is strictly reduced and there exists a limit energy 
$E^{\ast}_h:=\lim_{n\rightarrow \infty} E(z^n_h)$. Furthermore, up to subsequences, we have $z^{n}_h\rightarrow z^{\ast}_h$ strongly in $H^1_0(\Omega)$ where $z^{\ast}_h\in V_h$
with $\| z^{\ast}_h \|_{L^2(\Omega)} =1$ and $E(z^{\ast}_h) = E^{\ast}_h$ is a discrete eigenfunction of the GPE, i.e. there is $\lambda_h^{\ast}$ so that
\begin{align*}
a_{z^{\ast}_h} (z^{\ast}_h , v_h ) = \lambda^{\ast}_h (z^{\ast}_h , v_h )_{L^2(\Omega)} \qquad \mbox{for all } v_h \in V_h.
\end{align*}
\end{corollary}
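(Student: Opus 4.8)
The plan is to observe that every ingredient of the analysis in Section~\ref{sec:discrete:grad:flow} was formulated purely through the variational relations defining the iteration, and that each of these relations survives the Galerkin restriction to $V_h$. First I would record the defining property of the discrete Green's operator, $a_{z}(\mathcal{G}_z^h(z_h), v_h) = (z_h, v_h)_{L^2(\Omega)}$ for all $v_h \in V_h$, and verify that the discrete normalization factor satisfies $(\gamma_h^n)^{-1} = (z_h^n, \mathcal{G}_{z_h^n}^h z_h^n)_{L^2(\Omega)} = a_{z_h^n}(\mathcal{G}_{z_h^n}^h z_h^n, \mathcal{G}_{z_h^n}^h z_h^n)$, exactly as in the continuous case; this last identity is legitimate precisely because the test function $\mathcal{G}_{z_h^n}^h z_h^n$ itself lies in $V_h$.

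Next I would check that all test functions appearing in the proofs of the intermediate mass growth lemma, Lemma~\ref{sharp-lower-bounds-lemma} and Lemma~\ref{lemma-energy-reduction-euler} belong to $V_h$. Since $\hat{z}_h^{n+1}=(1-\tau)z_h^n + \tau\gamma_h^n\mathcal{G}_{z_h^n}^h z_h^n$ is a linear combination of elements of $V_h$, both $z_h^n$ and $\hat{z}_h^{n+1}-z_h^n$ are admissible test functions for the discrete variational problem. Consequently the key identities \eqref{mass-equality-1} and \eqref{pseudo-energy-identity} hold verbatim with their $h$-indexed counterparts. The remaining manipulations in Lemma~\ref{sharp-lower-bounds-lemma} are purely algebraic identities for integrals of powers of $z_h^n$ and $\hat{z}_h^{n+1}$ and do not refer to the ambient space at all. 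For Lemma~\ref{lemma-energy-reduction-euler} the only genuine analytic input is the embedding estimate bounding $\|\cdot\|_{L^4(\Omega)}^4$ by $a_0(\cdot,\cdot)^2$; since $V_h\subset H^1_0(\Omega)$ the same embedding constant is available, so the threshold $\tau_{\mbox{\rm\tiny max}}$ depends only on $\beta$ and $E(z_h^0)$ exactly as before.

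Finally I would transfer the global convergence argument of Theorem~\ref{lemma-weak-convergence-Euler}, where the finite dimension of $V_h$ actually simplifies matters: energy monotonicity gives an $H^1$-bounded sequence, and in the finite-dimensional space $V_h$ every bounded sequence possesses a strongly convergent subsequence, so the weak-limit extraction together with the Rellich--Kondrachov compactness step becomes automatic. I would then establish continuity of $z\mapsto\mathcal{G}_z^h z$ on $V_h$: strong convergence $z_h^{n_i}\to z_h^{\ast}$ yields $|z_h^{n_i}|^2\to|z_h^{\ast}|^2$ strongly in $L^2(\Omega)$, hence the forms $a_{z_h^{n_i}}(\cdot,\cdot)$ converge to $a_{z_h^{\ast}}(\cdot,\cdot)$ on $V_h$, and by the coercivity guaranteed uniformly by $V\ge1$ the discrete Green's images converge strongly to $\mathcal{G}_{z_h^{\ast}}^h z_h^{\ast}$. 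Passing to the limit in the defining relation tested against an arbitrary $v_h\in V_h$ produces the discrete eigenvalue equation $a_{z_h^{\ast}}(z_h^{\ast},v_h)=\lambda_h^{\ast}(z_h^{\ast},v_h)_{L^2(\Omega)}$, and the identification $E(z_h^{\ast})=E_h^{\ast}$ follows from the same $\gamma_n$-expression computation used in the continuous proof.

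I do not expect a serious obstacle, precisely because the discrete Green's operator is defined by a Galerkin problem posed over the very space $V_h$ in which all iterates and their differences live. The one point that demands care is the bookkeeping of admissible test functions: every use of the variational characterization of $\mathcal{G}_z^h$ must be against a $V_h$-element, and one must resist testing against general $v\in H^1_0(\Omega)$. Once this discipline is maintained, the proofs of this section carry over line by line, with the compactness arguments trivializing in finite dimensions.
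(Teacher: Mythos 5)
Your proposal is correct and matches the paper's approach: the paper gives no separate proof of this corollary, but justifies it with precisely the observation you elaborate, namely that every argument of Section~\ref{sec:discrete:grad:flow} only ever tests the variational relations against functions ($z^n$, $\hat z^{n+1}-z^n$, $\mathcal{G}^n z^n$) that lie in the Galerkin space, so the identities, the energy-reduction estimates, and the limit-passage carry over verbatim, with the compactness step trivializing in finite dimensions. Your bookkeeping of admissible test functions and the continuity of $z\mapsto\mathcal{G}_z^h z$ on $V_h$ is exactly the content the paper leaves implicit.
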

The convergence of approximate eigenpairs $(\lambda^{\ast}_h,z^{\ast}_h)$ to the true ones has been investigated and analyzed in \cite{CCM10,HMP14b,CCH18}.

\section{Global convergence to the ground state}
\label{sec:positive:eigenstates}
Theorem \ref{lemma-weak-convergence-Euler} shows uniqueness of the limit of the discrete flow $(z^n)$ under uniqueness of the  eigenfunction (up to normalization) on the energy level $E^{\ast}$. The latter assumption can be relaxed in the  particular case of eigenstates that are strictly positive in the interior of $\Omega$. As we have already seen, there exists at least one such state, which is the ground state of the energy functional $E$. In this section we will prove that it is also the only one. This is crucial for the following main result.
\begin{theorem}\label{global-convergence-ground state}
Consider the GF$a_z$-approach. Let the assumptions of Theorem \ref{lemma-weak-convergence-Euler} hold and $\tau_{n}\le1$ for all $n$. Then for any starting value
$z^0 \in H^1_0(\Omega)$ with $\| z^0 \|_{L^2(\Omega)}=1$ and $z^0 \ge 0$ the (full) sequence $(z^n)$ converges strongly in $H^1(\Omega)$ to 
the positive ground state $z_{\mbox{\rm\tiny GS}}$ (which is unique according to Proposition \ref{basic-existence-ground-state}).
\end{theorem}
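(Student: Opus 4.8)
The plan is to combine the subsequential convergence already furnished by Theorem~\ref{lemma-weak-convergence-Euler} with two new ingredients: (i) the restriction $\tau_n\le1$ forces every iterate $z^n$ to stay nonnegative, and (ii) the ground state $z_{\mbox{\rm\tiny GS}}$ is the \emph{only} nonnegative eigenfunction of the GPE. Granting these, the proof closes by a standard subsequence principle.

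First I would establish positivity preservation. Since $0\le\tau_n\le1$, the coefficient $1-\tau_n$ is nonnegative and $\gamma^n>0$, so by \eqref{GFaz-eqn} it suffices to show $\mathcal{G}^n z^n\ge0$ whenever $z^n\ge0$. This is the weak maximum principle for the coercive form $a_{z^n}(\cdot,\cdot)$ with nonnegative zeroth-order coefficient $V+\beta|z^n|^2$: testing $a_{z^n}(\mathcal{G}^n z^n,v)=(z^n,v)_{L^2(\Omega)}$ with $v=(\mathcal{G}^n z^n)^-\ge0$ gives
\begin{align*}
-\|\nabla(\mathcal{G}^n z^n)^-\|_{L^2(\Omega)}^2-\int_\Omega(V+\beta|z^n|^2)\,|(\mathcal{G}^n z^n)^-|^2\dx=(z^n,(\mathcal{G}^n z^n)^-)_{L^2(\Omega)}\ge0,
\end{align*}
which forces $(\mathcal{G}^n z^n)^-=0$. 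Hence $\hat z^{n+1}\ge0$, and normalization by the positive scalar $\|\hat z^{n+1}\|_{L^2(\Omega)}$ keeps this, so $z^n\ge0$ for all $n$ by induction. As the nonnegative cone is convex and closed, it is weakly closed, and therefore any weak $H^1$-limit of a subsequence of $(z^n)$ is again nonnegative.

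The heart of the proof is the uniqueness of the nonnegative eigenfunction. Let $u\in H^1_0(\Omega)$ with $\|u\|_{L^2(\Omega)}=1$, $u\ge0$, solve \eqref{weak-problem} for some $\lambda$. By elliptic regularity $u$ is H\"older continuous and bounded, and writing the equation as $-\triangle u=(\lambda-V-\beta u^2)u$ with bounded right-hand factor, the strong maximum principle yields $u>0$ in $\Omega$. I would then pass to the density $\rho:=u^2$ and exploit the \emph{hidden convexity} of the energy: on the convex set $\mathcal{A}:=\{\rho\ge0:\ \sqrt\rho\in H^1_0(\Omega),\ \int_\Omega\rho\dx=1\}$ the functional
\begin{align*}
J(\rho):=E(\sqrt\rho)=\tfrac12\int_\Omega|\nabla\sqrt\rho|^2\dx+\tfrac12\int_\Omega V\rho\dx+\tfrac\beta4\int_\Omega\rho^2\dx
\end{align*}
is strictly convex, because the kinetic term $\rho\mapsto\int_\Omega|\nabla\sqrt\rho|^2\dx$ is convex with equality only for proportional --- hence, under the mass constraint, identical --- densities, while $\int_\Omega V\rho\dx$ is linear and $\tfrac\beta4\int_\Omega\rho^2\dx$ is convex (cf.\ \cite{CCM10}). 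A first-variation computation shows that $\rho$ is stationary for $J$ on $\mathcal{A}$ if and only if $-\triangle\sqrt\rho+V\sqrt\rho+\beta\rho\sqrt\rho=\lambda\sqrt\rho$ for a Lagrange multiplier $\lambda$; thus the positive eigenfunction $u$ corresponds to an interior stationary point of the strictly convex $J$, which must be its unique global minimizer. Since $\rho_{\mbox{\rm\tiny GS}}=z_{\mbox{\rm\tiny GS}}^2$ is that minimizer (Proposition~\ref{basic-existence-ground-state}), we obtain $\rho=\rho_{\mbox{\rm\tiny GS}}$ and, by positivity, $u=z_{\mbox{\rm\tiny GS}}$. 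I expect this convexity step to be the main obstacle, both in justifying the convexity and first-variation computation rigorously and in verifying that the positivity constraint is inactive in the interior, so that stationarity truly coincides with the Euler--Lagrange equation.

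Finally I would assemble the pieces. By Theorem~\ref{lemma-weak-convergence-Euler}, every weakly convergent subsequence of the bounded sequence $(z^n)$ converges \emph{strongly} in $H^1(\Omega)$ to a normalized eigenfunction at the energy level $E^{\ast}$; by the second paragraph this limit is nonnegative, and by the third it equals $z_{\mbox{\rm\tiny GS}}$. Hence every subsequence of $(z^n)$ admits a further subsequence converging strongly in $H^1(\Omega)$ to the single limit $z_{\mbox{\rm\tiny GS}}$, and the elementary subsequence criterion then upgrades this to convergence of the full sequence, $z^n\to z_{\mbox{\rm\tiny GS}}$ strongly in $H^1(\Omega)$.
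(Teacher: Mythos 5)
Your proposal is correct in outline and reaches the result by a genuinely different route in its two key steps, so a comparison is in order. For positivity preservation, you test with the negative part $(\mathcal{G}^n z^n)^-$ (a weak maximum principle), whereas the paper characterizes $\mathcal{G}_{z}(v)$ as the unique minimizer of a quadratic functional $F$ and uses $F(|w|)\le F(w)$; both are sound and equally short. For the crucial uniqueness of the nonnegative eigenfunction, you invoke hidden convexity of $\rho\mapsto E(\sqrt\rho)$ on the constraint set and argue that a positive eigenfunction is a stationary, hence global, minimizer of a strictly convex functional; the paper instead tests the excited-state equation with $z_{\mbox{\rm\tiny GS}}^2/z_{\mbox{\rm\tiny ES}}$ and applies Picone's identity to derive an energy contradiction. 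These are two packagings of the same mechanism --- Picone's inequality \emph{is} the convexity inequality for the Dirichlet energy in the density variable --- and both hinge on the same technical point you rightly flag, namely justifying the quotient test function $\sigma/u$ (resp.\ $v^2/u$) near the boundary, which the paper discharges by citing \cite{Pic1910,BrF14}; your convexity route additionally yields directly that every nonnegative eigenfunction is the global energy minimizer. Finally, your assembly via the elementary subsequence principle (every subsequential limit is a nonnegative eigenfunction, of which there is only one) is cleaner than the paper's, which first proves $L^2$-convergence of the full sequence of densities via the identity \eqref{energy-identity-z-ast} and the nonnegativity of $a_{z^{\ast}}(v,v)-\lambda^{\ast}\|v\|_{L^2(\Omega)}^2$ before invoking uniqueness; your route also avoids the division by $\beta$ implicit in that step. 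No gap beyond the technical details you yourself identify, which are of the same nature and difficulty as those the paper resolves by citation.
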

\begin{remark}\label{how-to-chose-starting-value}
Theorem \ref{global-convergence-ground state} guarantees global convergence to the ground state, provided that the starting value $z^0$ is not changing its sign. Additionally, starting from a non-negative $z^0$, the gradient flow does not converge to an excited state. A sign-changing starting value is compulsory for the computation of excited states.
\end{remark}
Before we can prove Theorem \ref{global-convergence-ground state}, a few auxiliary results are required. The first result relates the positive eigenfunctions in the spectrum of the GPE to the ground states of a linear operator obtained by freezing the density. The lemma can be proved analogously to a similar result obtained in \cite[Lemma 2]{CCM10}.

\begin{lemma}\label{relation-ground state-linear-problem}
Let $z^{\ast} \in H^1_0(\Omega)$ with $\| z^{\ast} \|_{L^2(\Omega)}=1$ be an eigenstate of the GPE with eigenvalue $\lambda^{\ast}>0$, i.e.
\begin{align*}
\langle E^{\prime}(z^{\ast}) , v \rangle = \lambda^{\ast} (z^{\ast} , v)_{L^2(\Omega)} \qquad \mbox{for all } v \in H^1_0(\Omega).
\end{align*}
If $z^{\ast}\ge 0$ in $\Omega$, then $z^{\ast}$ can be characterized as the as the unique positive ($L^2$-normalized) ground state to the {\rm linear} operator $\mathcal{G}_{z^\ast}^{-1}$ (see Remark~\ref{rem:linear}) and $z^{\ast}$ must be even strictly positive in the interior of $\Omega$.
\end{lemma}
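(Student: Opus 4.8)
The plan is to freeze the density and reduce the claim to a classical Perron--Frobenius fact about the principal eigenpair of a linear self-adjoint Schr\"odinger-type operator. Since $\langle E^{\prime}(z^{\ast}),v\rangle = a_{z^{\ast}}(z^{\ast},v)$, the GPE eigenvalue relation reads $a_{z^{\ast}}(z^{\ast},v)=\lambda^{\ast}(z^{\ast},v)_{L^2(\Omega)}$ for all $v\in H^1_0(\Omega)$, which is exactly the weak form of the \emph{linear} eigenvalue problem $L w=\mu w$ with $L:=\mathcal{G}_{z^{\ast}}^{-1}=-\triangle + V + \beta|z^{\ast}|^2$ (homogeneous Dirichlet conditions). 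Thus $z^{\ast}$ is an eigenfunction of $L$ with eigenvalue $\lambda^{\ast}$, equivalently an eigenfunction of the compact positive operator $\mathcal{G}_{z^{\ast}}$ with eigenvalue $1/\lambda^{\ast}$ (cf. Remark~\ref{rem:linear}). Because $V\ge1$ and $\beta\ge0$, the form $a_{z^{\ast}}$ is coercive on $H^1_0(\Omega)$, so $L$ has discrete spectrum $0<\mu_1\le\mu_2\le\cdots\to\infty$ with an $L^2$-orthonormal basis of eigenfunctions, and in fact $\mu_1\ge1$.

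First I would establish positivity and simplicity of the principal eigenpair $(\mu_1,w_1)$, where $\mu_1=\min_{0\ne v\in H^1_0(\Omega)} a_{z^{\ast}}(v,v)/\|v\|_{L^2(\Omega)}^2$. Since $|\nabla|v||=|\nabla v|$ a.e. and the lower-order terms depend only on $|v|^2$, replacing a minimizer $w_1$ by $|w_1|$ leaves the Rayleigh quotient unchanged, so $w_1$ may be taken nonnegative. Elliptic regularity (note $|z^{\ast}|^2\in L^3(\Omega)$ for $d\le3$, hence the zeroth-order coefficient lies in $L^p$ with $p>d/2$) gives continuity of $w_1$, and the strong maximum principle forces $w_1>0$ in the interior of $\Omega$. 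Simplicity follows because every minimizer of the Rayleigh quotient is again a principal eigenfunction and, by the same reflection argument, is of one sign; two functions of one sign cannot be $L^2$-orthogonal, so the principal eigenspace is one-dimensional.

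Next I would pin down $z^{\ast}$ by an orthogonality argument. Since $\|z^{\ast}\|_{L^2(\Omega)}=1$ we have $z^{\ast}\ne0$, and $z^{\ast}\ge0$ together with $w_1>0$ yields $(z^{\ast},w_1)_{L^2(\Omega)}=\int_{\Omega} z^{\ast} w_1\dx>0$. If $\lambda^{\ast}$ were strictly larger than $\mu_1$, then $z^{\ast}$ and $w_1$, being eigenfunctions of the self-adjoint operator $L$ for distinct eigenvalues, would be $L^2$-orthogonal, contradicting the strict positivity of this pairing. Hence $\lambda^{\ast}=\mu_1$, so $z^{\ast}$ is a principal eigenfunction of $L=\mathcal{G}_{z^{\ast}}^{-1}$; by simplicity it is a scalar multiple of $w_1$, and the normalization $\|z^{\ast}\|_{L^2(\Omega)}=1$ with $z^{\ast}\ge0$ identifies $z^{\ast}=w_1$. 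In particular $z^{\ast}>0$ in the interior of $\Omega$, and it is the unique positive $L^2$-normalized ground state of $\mathcal{G}_{z^{\ast}}^{-1}$.

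The one genuinely technical point, and where I expect to spend the most care, is the strong maximum principle for $w_1$: the zeroth-order coefficient $V+\beta|z^{\ast}|^2-\mu_1$ is only in $L^p(\Omega)$ with $p>d/2$ and need not be of one sign, so I would invoke the De Giorgi--Nash--Moser Harnack inequality rather than the classical maximum principle to pass from $w_1\ge0$, $w_1\not\equiv0$ to $w_1>0$. Everything else is the standard Courant--Fischer and orthogonality machinery carried out exactly as in \cite[Lemma 2]{CCM10}; the only adaptation is that the frozen potential here is $V+\beta|z^{\ast}|^2$ in place of the potential appearing there.
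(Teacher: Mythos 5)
Your proposal is correct and follows essentially the same route the paper intends: the paper gives no detailed proof here but states that the lemma ``can be proved analogously to \cite[Lemma 2]{CCM10}'', and your argument --- freezing the density, invoking the Perron--Frobenius/Courant--Fischer characterization of the simple, positive principal eigenpair of the linear operator $\mathcal{G}_{z^\ast}^{-1}=-\triangle+V+\beta|z^{\ast}|^2$, and using $(z^{\ast},w_1)_{L^2(\Omega)}>0$ to rule out $\lambda^{\ast}>\mu_1$ --- is precisely that argument, written out in full with the appropriate regularity (De Giorgi--Nash--Moser/Harnack) for the $L^p$, $p>d/2$, zeroth-order coefficient.
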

Next, we prove that the positive eigenstate is unique and hence always the ground state.
\begin{lemma}[Uniqueness of positive eigenstates]
\label{lemma-uniqueness-positive-critical-points}
There is a unique positive eigenfunction to the GPE \eqref{weak-problem}, which is the ground state.
\end{lemma}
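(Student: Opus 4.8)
The plan is to argue directly: suppose there were two positive eigenfunctions $z_1,z_2\in H^1_0(\Omega)$ with $\|z_1\|_{L^2(\Omega)}=\|z_2\|_{L^2(\Omega)}=1$ and (a~priori distinct) eigenvalues $\lambda_1,\lambda_2>0$, and show $z_1=z_2$. By Lemma~\ref{relation-ground state-linear-problem} both are strictly positive in the interior of $\Omega$ and, by elliptic regularity, H\"older-continuous on $\overline\Omega$. The heart of the argument is a ground-state-substitution (Picone-type) identity obtained by testing the weak equation \eqref{weak-problem} for $z_1$ with $\varphi_1:=(z_2^2-z_1^2)/z_1$ and the one for $z_2$ with $\varphi_2:=(z_1^2-z_2^2)/z_2$, and then adding.

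Upon adding, three cancellations occur which I would verify term by term. First, the potential contributions $V(z_2^2-z_1^2)$ and $V(z_1^2-z_2^2)$ cancel identically. Second, the eigenvalue terms combine to $\lambda_1\int_\Omega(z_2^2-z_1^2)\dx+\lambda_2\int_\Omega(z_1^2-z_2^2)\dx$, which vanishes because $\int_\Omega z_1^2\dx=\int_\Omega z_2^2\dx=1$; this is the decisive point where the unknown and possibly different eigenvalues drop out \emph{without} any further assumption. Third, the cubic terms produce $-\beta\int_\Omega(z_1^2-z_2^2)^2\dx$. What remains is the gradient part, and a direct computation using $\nabla z_i=z_i\nabla\log z_i$ shows
\[
\int_\Omega \bigl(\nabla z_1\cdot\nabla\varphi_1 + \nabla z_2\cdot\nabla\varphi_2\bigr) \dx = -\int_\Omega (z_1^2+z_2^2)\,|\nabla\log z_1-\nabla\log z_2|^2\dx.
\]

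Collecting everything, the added identity reads $-\int_\Omega(z_1^2+z_2^2)|\nabla\log z_1-\nabla\log z_2|^2\dx=\beta\int_\Omega(z_1^2-z_2^2)^2\dx$. Since the left-hand side is nonpositive and the right-hand side nonnegative (this is exactly where $\beta\ge0$ enters), both must vanish. In particular the gradient term forces $\nabla\log z_1=\nabla\log z_2$ a.e., so $z_1=c\,z_2$ for some constant $c>0$ on the connected interior, and the common $L^2$-normalization gives $c^2=1$, i.e.\ $z_1=z_2$. (This is the analytic face of the ``hidden convexity'' of $E$ in the density $|z|^2$.) Finally, since the ground state $z_{\mbox{\rm\tiny GS}}$ of Proposition~\ref{basic-existence-ground-state} is, up to sign, a strictly positive eigenfunction, it is one such positive eigenstate; by the uniqueness just established it is the only one, which proves the claim.

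The main obstacle I anticipate is purely technical: justifying that $\varphi_1$ and $\varphi_2$ are admissible test functions in $H^1_0(\Omega)$, as they involve division by eigenfunctions that vanish on $\partial\Omega$. This is where the regularity assumptions on $\Omega$ (convex Lipschitz or smooth boundary) are used: elliptic regularity together with Hopf's boundary-point lemma yields that each $z_i$ is comparable to $\operatorname{dist}(\cdot,\partial\Omega)$ near $\partial\Omega$, so the ratios $z_2/z_1$ and $z_1/z_2$ stay bounded and $\varphi_1,\varphi_2\in H^1_0(\Omega)$. Should Hopf's lemma be awkward in the convex-Lipschitz case, I would instead regularize the denominators as $z_i+\eps$, derive the corresponding identity with a controlled remainder, and pass to the limit $\eps\to0$ by dominated convergence. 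Everything else is routine bookkeeping.
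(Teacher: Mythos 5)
Your proof is correct, and it takes a genuinely different (though related) route from the paper's. The paper applies the \emph{one-sided} Picone inequality \eqref{Picone} a single time --- testing the excited state's equation with $z_{\mbox{\rm\tiny GS}}^2/z_{\mbox{\rm\tiny ES}}$ --- then absorbs the cross term via $\int_\Omega z_{\mbox{\rm\tiny ES}}^2 z_{\mbox{\rm\tiny GS}}^2\dx\le\tfrac12\int_\Omega z_{\mbox{\rm\tiny GS}}^4\dx+\tfrac12\int_\Omega z_{\mbox{\rm\tiny ES}}^4\dx$ and uses \eqref{groundstate-energy-formula} to deduce $E_{\mbox{\rm\tiny ES}}\le E_{\mbox{\rm\tiny GS}}$, contradicting the standing assumption that the second state is excited (the case of equal energies being handled by the uniqueness of minimizers in Proposition~\ref{basic-existence-ground-state}). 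You instead symmetrize: testing both equations with the quotient functions $\varphi_i$ and adding yields an exact identity in which the eigenvalue terms vanish by normalization and the two remaining sides have opposite signs, so both vanish. I checked your algebra --- the gradient identity $\int_\Omega(\nabla z_1\cdot\nabla\varphi_1+\nabla z_2\cdot\nabla\varphi_2)\dx=-\int_\Omega(z_1^2+z_2^2)|\nabla\log z_1-\nabla\log z_2|^2\dx$ and the cubic term $-\beta\int_\Omega(z_1^2-z_2^2)^2\dx$ are both right. What your version buys: it proves directly that \emph{any} two positive eigenfunctions coincide without ever comparing energies or invoking the variational characterization of the ground state (that enters only at the very end, to identify the unique positive state as $z_{\mbox{\rm\tiny GS}}$), and it degenerates gracefully to the linear case $\beta=0$ via the gradient term. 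What the paper's version buys: it needs only the standard one-function Picone inequality as a black box from \cite{BrF14} and slightly less bookkeeping. Both arguments share the same technical caveat about the admissibility of quotient test functions vanishing on $\partial\Omega$; the paper defers this silently to the cited literature, whereas you address it explicitly --- your fallback of regularizing the denominator as $z_i+\eps$ and passing to the limit is exactly how \cite{BrF14} handles it and is the safer route in the convex Lipschitz case.
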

\begin{proof}
We recall the Picone identity (cf. \cite{Pic1910,BrF14}), which implies that for two functions $u,v\in H^1(\Omega)$ with $u\ge0$ and $v>0$ in $\Omega$ it holds
\begin{align}
\label{Picone}
\int_{\Omega} \nabla v \cdot \nabla \left( \frac{u^2}{v} \right)\dx \le \int_{\Omega} |\nabla u|^2\dx.
\end{align} 
From Lemma \ref{relation-ground state-linear-problem} we know that any nonnegative eigenfunction must be even strictly positive. Let us therefore assume we have two positive $L^2$-normalized eigenfunctions $z_{\mbox{\rm\tiny GS}},z_{\mbox{\rm\tiny ES}} \in H^1_0(\Omega)$ to the Gross-Pitaevskii equation, where $z_{\mbox{\rm\tiny GS}}$ is the unique ground state with minimal energy $E_{\mbox{\rm\tiny GS}}$ and eigenvalue $\lambda_{\mbox{\rm\tiny GS}}$ and $z_{\mbox{\rm\tiny ES}}$ is an excited state with energy $E_{\mbox{\rm\tiny ES}}> E_{\mbox{\rm\tiny GS}}$. Using $\| z_{\mbox{\rm\tiny GS}} \|_{L^2(\Omega)}=1$ it holds
\begin{eqnarray*}
\lefteqn{\lambda_{\mbox{\rm\tiny ES}}
= \lambda_{\mbox{\rm\tiny ES}} ( z_{\mbox{\rm\tiny ES}} , \frac{z^2_{\mbox{\rm\tiny GS}}}{ z_{\mbox{\rm\tiny ES}}} )_{L^2(\Omega)} 
= a_{ z_{\mbox{\rm\tiny ES}} }( z_{\mbox{\rm\tiny ES}}  , \frac{z^2_{\mbox{\rm\tiny GS}}}{ z_{\mbox{\rm\tiny ES}}}  ) }\\
&\overset{\eqref{Picone}}{\le}&
\int_{\Omega} |  \nabla z_{\mbox{\rm\tiny GS}}  |^2 \dx
+ \int_{\Omega} V | z_{\mbox{\rm\tiny GS}}  |^2 \dx
+ \beta \int_{\Omega} | z_{\mbox{\rm\tiny ES}}  |^2 | z_{\mbox{\rm\tiny GS}}  |^2\dx \\
&\le& \int_{\Omega} |  \nabla z_{\mbox{\rm\tiny GS}}  |^2 \dx
+ \int_{\Omega} V | z_{\mbox{\rm\tiny GS}}  |^2\dx + \beta \int_{\Omega} | z_{\mbox{\rm\tiny GS}}  |^4 \dx
 - \tfrac{\beta}{2} \int_{\Omega} | z_{\mbox{\rm\tiny GS}}  |^4 \dx
 + \tfrac{\beta}{2} \int_{\Omega} | z_{\mbox{\rm\tiny ES}}  |^4 \dx \\
 &=& \lambda_{\mbox{\rm\tiny GS}}  - \tfrac{\beta}{2} \int_{\Omega} | z_{\mbox{\rm\tiny GS}}  |^4 \dx
 + \tfrac{\beta}{2} \int_{\Omega} | z_{\mbox{\rm\tiny ES}}  |^4\dx.
\end{eqnarray*}
%
%
We conclude that
\begin{align*}
2 E_{\mbox{\rm\tiny ES}} = 
\lambda_{\mbox{\rm\tiny ES}} - \tfrac{\beta}{2} \int_{\Omega} | z_{\mbox{\rm\tiny ES}}  |^4\dx
\le  \lambda_{\mbox{\rm\tiny GS}}  - \tfrac{\beta}{2} \int_{\Omega} | z_{\mbox{\rm\tiny GS}}  |^4 \dx
= 2 E_{\mbox{\rm\tiny GS}}.
\end{align*}
This is a contradiction to the assumption that $z_{\mbox{\rm\tiny ES}}$ was an excited state with $E_{\mbox{\rm\tiny ES}}> E_{\mbox{\rm\tiny GS}}$. Hence, we have $z_{\mbox{\rm\tiny ES}}=z_{\mbox{\rm\tiny GS}}$ which is unique.
\end{proof}

The next (fairly obvious) result shows that positivity is preserved by the iteration.
\begin{lemma}
Let $v, z\in H^1_0(\Omega)$. Then
\begin{align*}
v\ge 0 \qquad \Rightarrow \qquad \mathcal{G}_{z}(v) \ge 0.
\end{align*}
In particular, if $z^n\ge 0$ and $\tau_{n}\le1$ then $z^{n+1}\ge0$.
\end{lemma}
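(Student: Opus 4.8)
The plan is to establish the pointwise positivity of the Green's operator $\mathcal{G}_z$ via a weak maximum principle, and then read off the iteration statement as an immediate consequence of the structure of the update \eqref{GFaz-eqn}.

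First I would prove the implication $v \ge 0 \Rightarrow \mathcal{G}_z(v) \ge 0$. Write $u := \mathcal{G}_z(v) \in H^1_0(\Omega)$, which by definition satisfies $a_z(u,w) = (v,w)_{L^2(\Omega)}$ for all $w \in H^1_0(\Omega)$. Decompose $u = u^+ - u^-$ into its positive and negative parts $u^{\pm} := \max\{\pm u, 0\} \in H^1_0(\Omega)$, and test the defining equation with the nonnegative function $w = u^- \ge 0$. Using the standard almost-everywhere identities $\nabla u \cdot \nabla u^- = -|\nabla u^-|^2$ and $u\, u^- = -(u^-)^2$, the left-hand side collapses to $a_z(u, u^-) = -a_z(u^-, u^-)$, while the right-hand side $(v, u^-)_{L^2(\Omega)} \ge 0$ because both factors are nonnegative.

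The decisive step is coercivity. Since $V \ge 1$ and $\beta \ge 0$, we have $a_z(u^-, u^-) = \int_{\Omega} |\nabla u^-|^2 + V (u^-)^2 + \beta |z|^2 (u^-)^2 \dx \ge \|u^-\|_{H^1(\Omega)}^2 \ge 0$. Combined with $-a_z(u^-, u^-) = (v, u^-)_{L^2(\Omega)} \ge 0$, this forces $a_z(u^-, u^-) = 0$, hence $u^- = 0$ and $u = \mathcal{G}_z(v) \ge 0$. I expect this maximum-principle computation---in particular the sign bookkeeping for the gradient and potential terms---to be the only genuine content of the proof; everything else is routine.

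Finally, the iteration claim follows by inspection of $\hat{z}^{n+1} = (1-\tau_n) z^n + \tau_n \gamma^n \mathcal{G}^n z^n$. If $z^n \ge 0$, then the first part yields $\mathcal{G}^n z^n = \mathcal{G}_{z^n}(z^n) \ge 0$; moreover $\gamma^n > 0$, being the quotient of the positive mass $(z^n, z^n)_{L^2(\Omega)}$ and the positive quantity $a_{z^n}(\mathcal{G}^n z^n, \mathcal{G}^n z^n)$, and for $\tau_n \le 1$ both coefficients $1-\tau_n \ge 0$ and $\tau_n > 0$ are nonnegative. Thus $\hat{z}^{n+1}$ is a nonnegative combination of nonnegative functions, so $\hat{z}^{n+1} \ge 0$, and division by the positive scalar $\|\hat{z}^{n+1}\|_{L^2(\Omega)}$ preserves this, giving $z^{n+1} \ge 0$.
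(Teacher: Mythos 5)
Your proof is correct, but it takes a genuinely different route from the paper. The paper characterizes $u:=\mathcal{G}_z(v)$ as the unique minimizer of the quadratic functional $F(w) := a_z(w,w) - \tfrac{1}{2}(v,w)_{L^2(\Omega)}$ and observes that, because $v\ge 0$ and $|w|$ has the same $a_z$-energy as $w$, one has $F(|u|)\le F(u)$; uniqueness of the minimizer then forces $|u|=u$. You instead run the classical weak maximum principle (Stampacchia truncation): test the defining identity with $w=u^-$, use the disjoint-support identities $\nabla u\cdot\nabla u^- = -|\nabla u^-|^2$ and $u\,u^-=-(u^-)^2$ to get $-a_z(u^-,u^-)=(v,u^-)_{L^2(\Omega)}\ge 0$, and conclude $u^-=0$ from coercivity ($V\ge 1$, $\beta\ge 0$). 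Both arguments ultimately rest on the lattice property of $H^1_0(\Omega)$ (that $u^-$, respectively $|u|$, lies in $H^1_0(\Omega)$ with the expected gradient), and both are complete. The paper's version is marginally shorter but needs the symmetry and strict convexity underlying the variational characterization; yours works verbatim for any coercive bilinear form with nonnegative zero-order coefficients, symmetric or not, which is a mild gain in generality. The second half of your argument --- nonnegative combination of nonnegative functions for $\tau_n\le 1$, followed by division by the positive normalization constant --- is exactly the paper's.
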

\begin{proof}
We can characterize $\mathcal{G}_{z}(v)$ as the unique minimizer of
\begin{align*}
F(w) := a_{z}(w,w) - \tfrac{1}{2} (v, w)_{L^2(\Omega)}
\end{align*} 
among all $w\in H^1_0(\Omega)$. However, since it holds
$F( |\mathcal{G}_{z}(v)| ) \le  F( \mathcal{G}_{z}(v) )$
we conclude by uniqueness $|\mathcal{G}_{z}(v)| = \mathcal{G}_{z}(v)$, which guarantees that $ \mathcal{G}_{z}(v)$ cannot become negative. The positivity of $z^{n+1}$ follows immediately with $\hat{z}^{n+1} = (1-\tau_n)z^n + \tau_n \gamma^n \mathcal{G}^n z^n$, where $\gamma^n,z^n,\mathcal{G}^n z^n \ge 0$ and $\tau_n\le 1$.
\end{proof}
We are now ready to prove the main result of this section.
\begin{proof}[Proof of Theorem \ref{global-convergence-ground state}]
Let $z^{\ast} \in H^1_0(\Omega)$ be any of the limits of a subsequence of $z^n$ whose existence is guaranteed by Theorem \ref{lemma-weak-convergence-Euler} with $E^{\ast}=E(z^{\ast})$. Then we have that for any $z\in H^1_0(\Omega)$ with $\| z\|_{L^2(\Omega)}=\| z^{\ast}\|_{L^2(\Omega)}=1$ it holds
\begin{eqnarray}
\nonumber \lefteqn{E(z) - E(z^{\ast})} \\ 
\nonumber &=&
\tfrac{1}{2} a_0(z,z) + \tfrac{\beta}{4} \int_{\Omega} |z|^4 \dx- \tfrac{1}{2} a_{z^{\ast}}(z^{\ast},z^{\ast}) + \tfrac{\beta}{4} \int_{\Omega} |z^{\ast}|^4  \dx\\
\nonumber  &=&
\tfrac{1}{2} \left( a_{z^{\ast}}(z,z) - a_{z^{\ast}}(z^{\ast},z^{\ast})  \right) +
 \tfrac{\beta}{2} \int_{\Omega} |z|^4\dx
+ \tfrac{\beta}{4} \int_{\Omega} |z^{\ast}|^4\dx  - \tfrac{\beta}{2} \int_{\Omega} |z|^2 |z^{\ast}|^2 \dx\\
 \nonumber&=&
\tfrac{1}{2} \left( a_{z^{\ast}}(z - z^{\ast} ,z -z^{\ast}) - \lambda^{\ast} (z - z^{\ast} ,z -z^{\ast})_{L^2(\Omega)} \right) + \tfrac{\beta}{4} \int_{\Omega} (|z|^2  - |z^{\ast}|^2)^2\dx\\[-0.7em]
\label{energy-identity-z-ast}
\end{eqnarray}
Here $\lambda^{\ast}$ is the eigenvalue to the eigenfunction $z^{\ast}$. Since $z^{\ast}$ is the strong $H^1$-limit of a sequence of positive functions $z^{n_i}$, pointwise convergence almost everywhere ensures that $z^{\ast}\ge0$. Hence, we can apply Lemma \ref{relation-ground state-linear-problem} that guarantees $z^{\ast}>0$ and that $\lambda^{\ast}>0$ is the ground state eigenvalue of the linear operator $\mathcal{G}_{z^\ast}^{-1}$. Hence, it holds $\langle \mathcal{G}_{z^\ast}^{-1} v , v \rangle \ge \lambda^{\ast} (v,v)_{L^2(\Omega)}$ for any $v\in H^1_0(\Omega)$ or respectively 
$$
a_{z^{\ast}} (v,v ) - \lambda^{\ast} (v,v)_{L^2(\Omega)} = \langle \mathcal{G}_{z^\ast}^{-1} v , v \rangle - \lambda^{\ast} (v,v)_{L^2(\Omega)} \ge 0.
$$
Using this finding in \eqref{energy-identity-z-ast} implies
\begin{align}
\int_{\Omega} (|z^n|^2  - |z^{\ast}|^2)^2  \dx\le \tfrac{4}{\beta} (E(z^n) -E^{\ast}) \overset{n\rightarrow\infty}\longrightarrow 0,
\end{align}
where the global convergence of the energies is ensured by Theorem \ref{lemma-weak-convergence-Euler}. Since $z^n,z^{\ast}\ge 0$, we conclude convergence of the whole sequence $z^{n}$ to $z^{\ast}$. That means that all strong $H^1$-limits of subsequences in Theorem \ref{lemma-weak-convergence-Euler} must coincide. Lemma \ref{lemma-uniqueness-positive-critical-points}, the uniqueness of the nonnegative eigenstates, finishes the proof.
\end{proof}
\begin{remark}
Elliptic regularity theory provides $H^2$- and $L^{\infty}$-bounds for $\mathcal{G}_{z}(v)$ which are of the form
\begin{align*}
\| \mathcal{G}_{z}(v) \|_{L^{\infty}(\Omega)} \lesssim \| \mathcal{G}_{z}(v) \|_{H^2(\Omega)} \lesssim
\| v \|_{L^2(\Omega)} \left(
1 + \| V\|_{L^{\infty}(\Omega)} + \beta \| z \|_{L^6(\Omega)}^2 \right).
\end{align*}
This implies that in the energy diminishing regime, the iterates $z^{n}$ remain pointwise uniformly bounded, with a bound that depends on $\beta$, $V$ and $E(z^0)$.
\end{remark}

\section{Numerical experiments}
\label{sec:num:experiments}

This section concerns the numerical performance of the proposed projected $a_z$-Sobolev gradient flow GF$a_z$ defined in \eqref{GFaz-eqn}. For a better assessment, we compare with established gradient flows, the GF$L^2$ iteration (or DNGF) from \eqref{GFL2-eqn}, the $H^1$-Sobolev gradient version GF$H^1$ from \eqref{GFH1-eqn} and the $a_0$-Sobolev gradient version GF$a_0$ from \eqref{GFa0-eqn} that incorporates the potential $V$. For the sake of a fair comparison of all methods, we use the (otherwise impractical) stopping criterion that the relative error with respect to some highly accurate (accuracy order $10^{-8}$) reference energy 
falls below the tolerance $\mbox{\rm\small TOL}=10^{-5}$. For the sake of simplicity, we measure performance in terms of number of iterations required to match this stopping criterion. This is a reasonable complexity indicator because the computational cost per iteration is essentially the same for all methods if a uniform step size $\tau$ is used. While GF$L^2$ and GF$a_z$ require the assembly of a new stiffness matrix from the previous density $|z^n|^2$ and one linear solve, GF$a_0$ and GF$H_1$ require two solves but the system matrices are invariant and do not need to be re-assembled. Our practical experience is that GF$L^2$ and GF$a_z$ iterations are slightly faster than the other two but this will not be taken into account in the following comparison.
 
As a general model, we solve the following  Gross-Pitaevskii eigenvalue problem: find $z^{\ast} \in H^1_0(\Omega)$ with $\| z^{\ast} \|_{L^2(\Omega)}=1$ and 
\begin{align}\label{weak-problem-new}
\frac{1}{2} (\nabla z^{\ast} , \nabla v )_{L^2(\Omega)} +  (V \hspace{2pt} z^{\ast} , v )_{L^2(\Omega)}  
+ \beta ( |z^{\ast}|^2 \hspace{2pt} z^{\ast} , v )_{L^2(\Omega)}  
 = \lambda^{\ast} ( z^{\ast} , v )_{L^2(\Omega)}
\end{align}
for all $v\in H^1_0(\Omega)$ and in a bounded domain $\Omega$ of $\R^2$. Note that the kinetic part, i.e. $(\nabla z^{\ast} , \nabla v )_{L^2(\Omega)}$, has an additional scaling factor $1/2$ compared to previously considered problem \eqref{weak-problem}. The particular choices of $\Omega$, $V$ and $\beta$ are specified separately in the various experiments. All problems are discretized using a $P1$-Lagrange finite element method on a uniform grid of width $h$ specified below.
Although adaptivity (as explained in Remark \ref{adaptive-GFaz}) can be used to improve the performance of GF$a_z$ (and also GF$H_1$, GF$a_0$), our comparisons focus on equidistant steps $\tau$. 

\begin{remark}
We stress that our comparison only aims at comparing the basic versions of the gradient flow methods and that each of these methods can be improved significantly with various techniques and hence the overall picture might change in this case. Here, we refer for example to the improvements of GF$L^2$ by using preconditioners and conjugated gradients as suggested in \cite{ALT17} or the improvements of GF$a_0$ by using Riemannian conjugate gradients as proposed in \cite{DaP17}. Such improvement can boost the performance dramatically compared to the basic versions of the gradient flow methods (cf. the numerical experiments in \cite{ALT17,DaP17}). Furthermore, adaptive mesh refinement strategies can improve the efficiency even further \cite{HSW19}. Another strategy, which can be particularly beneficial for excited states, is to use a different linearization technique that is based on the derivative of a scaling-invariant version of the Gross-Pitaevskii operator and which reacts more favorably to spectral shifts \cite{AHP19,JKM14}.
\end{remark}

\subsection{Model problem 1 - Ground states for a harmonic potential}
\label{subsec:mod1:groundstates}

In the first model problem, we consider \eqref{weak-problem-new} for a harmonic trapping potential with trapping frequencies $1/2$, i.e.
$$
V(x)=\tfrac{1}{2}|x|^2.
$$
The repulsion parameter $\beta$ is selected with three different values $\beta=10,100,1000$. Computing the corresponding Thomas-Fermi radii of the problem we restrict the computations to a square domain of the size $\Omega=(-6,6)^2$. The initial value $z^0$ is selected as the Thomas-Fermi density computed according to \cite{Bao14} using the exact ground state for $\beta = 0$. Since this is a nonnegative initial value, we expect all numerical approximations to converge to the unique positive ground state of $E$ (if $\tau$ is in the convergent regime). The ground state energies and eigenvalues for different values of $\beta$ are listed in Table \ref{table-energies}.

\begin{table}[h!]
\caption{\it Approximate ground state energies $E_{\mbox{\rm\tiny GS}}$ and corresponding ground state eigenvalues $\lambda_{\mbox{\rm\tiny GS}}$ for Model Problem 1 with $h=12\cdot 2^{-8}$ and different values for $\beta$.}
\label{table-energies}
\begin{center}
\begin{tabular}{|c||c|c|}
\hline $\beta$ & $E_{\mbox{\rm\tiny GS}}$  & $\lambda_{\mbox{\rm\tiny GS}}$ \\
\hline
\hline 10     &   0.79620688 & 2.06380 \\
\hline 100   &   1.97298868 & 5.75977 \\
\hline 1000 &   5.99303235   & 17.9771 \\
\hline
\end{tabular}\end{center}
\end{table}
Throughout our numerical experiments we observed that the stability regions for GF$H^1$ and GF$a_0$ are notably smaller than the ones for GF$L^2$ and GF$a_z$. Furthermore, the size of the spatial mesh size $h$ has essentially no influence on the convergence and number of steps required to fall below the tolerance. Both of these findings become visible in the results depicted in Table \ref{table-iterations-1} where we compare the different methods for the ad-hoc parameter choices $\tau=\tau_n=0.5$ and $\tau=\tau_n=1$ and for the mesh sizes $h=12\cdot 2^{-6}$ and  $h=12\cdot 2^{-8}$. With the default choice $\tau=1$, GF$L^2$ and GF$a_z$ perform equally well. In general we observe that GF$a_z$ is more sensitive with respect to the step size parameter $\tau$. 

\begin{table}[h!]
\caption{\it Model Problem 1: computation of ground states. 
The table shows the number of iterations obtained for the various methods for $\tau=0.5$ and $\tau=1$. The entry \quotes{$\infty$} means that the iteration did not converge. The spatial mesh size was selected as $h=12\cdot 2^{-6}$. The entries in brackets show the iteration count for higher spatial resolution $h=12\cdot 2^{-8}$. }
\label{table-iterations-1}
\begin{center}
\begin{tabular}{|c|c||c|c|c|c|c|c|}
\hline $\tau$ & $\beta$ & GF$L^2$  &  GF$H^1$ &  GF$a_0$ &  GF$a_{z}$ \\
\hline
\hline 1.0 & 10 & 9 (9)  & $\infty$ ($\infty$) & 7 (7) & 6 (7) \\
\hline 0.5 & 10 & 11 (11)& $\infty$ ($\infty$) & 14 (14)& 14 (14) \\
\hline
\hline 1.0 & 100 & 11 (12)& $\infty$ ($\infty$) & $\infty$ ($\infty$) & 9 (9)\\
\hline 0.5 & 100 & 13 (13)& $\infty$ ($\infty$) & $\infty$ ($\infty$) & 18 (18)\\
\hline
\hline 1.0 & 1000 & 15 (15)& $\infty$ ($\infty$) & $\infty$ ($\infty$) & 11 (11) \\
\hline 0.5 & 1000 & 16 (16)& $\infty$ ($\infty$) & $\infty$ ($\infty$) & 22 (22) \\
\hline
\end{tabular}\end{center}
\end{table}
Since the tables show only the results for two exemplary choices of $\tau$, it is more interesting to investigate what, for a fixed setup, is the minimum number of iterations that the methods require to reach the tolerance. We keep the step size $\tau$ constant. Corresponding results are depicted in Table \ref{table-iterations-3} for the three different values of $\beta$. We observe that the bigger $\beta$, the more iterations are required, though the growth is only moderate. We see that GF$a_{z}$ requires the fewest iterations, closely followed by GF$L^2$. Both  GF$H^1$ and GF$a_0$ perform decently, though they are considerably behind the other two approaches. We made the same observation in various experiments and assume that this is linked to the smaller stability domain of the GF$H^1$ and GF$a_0$, enforcing smaller values for $\tau$ and hence smaller updates in modulus. Optimal values for $\tau$ can be found by solving a minimization problem for $\tau$ in each time step (cf. \cite[Section 4]{DaK10}). If this is not done, the GF$H^1$ approach can be tough to use, because a stable constant time step is rather small. 

In Lemma \ref{sharp-lower-bounds-lemma} we observed the expected divergence (energy blow-up) for the GF$a_{z}$-approach for time steps $\tau\ge 2$. This bound seems to be pretty sharp according to further numerical experiments not presented here. All $H^1$ gradient flows share such a time step restriction. Only GF$L^2$ is unconditionally stable for all $\tau<\infty$.

\begin{table}[h!]
\caption{\it Model Problem 1: computation of ground states. The table shows the minimum number of iterations $N$ that the methods required to reach the error tolerance. Alongside $N$ we list one possible step size $\tau$ for which this number is reached. The spatial mesh size is fixed with $h=12\cdot 2^{-6}$.}
\label{table-iterations-3}
\begin{center}
\begin{tabular}{|c||c|c||c|c||c|c|}
\hline 
& $\tau$ & $N$ & $\tau$ & $N$ & $\tau$ & $N$ \\
\hline
\hline
& \multicolumn{2}{|c||}{$\beta = 10\hspace{10pt}$} 
& \multicolumn{2}{|c||}{$\beta = 100\hspace{5pt}$} 
& \multicolumn{2}{|c|}{$\beta = 1000$} 
 \\
\hline
GF$L^2$  & $5.0$ & $7$ %
& $5.0$ & $10$ %
& $5.0$ & $15$ \\ %
\hline
GF$H^1$  & $0.25$ & $41$ %
& $0.31$ & $25$ %
& $0.12$ & $48$ \\ %
\hline
GF$a_0$  & $1.0$ & $7$ %
& $0.34$ & $24$ %
& $0.1$ & $109$ \\%
\hline
GF$a_{z}$ & $1.2$ & $5$ %
& $1.1$ & $8$ %
& $1.1$ & $10$ \\ %
\hline
\end{tabular}\end{center}
\end{table}
\noindent The remaining experiments focus on a comparison between GF$a_{z}$ and GF$L^2$.

\subsection{Model problem 2 - Ground state in a lattice potential}
In the second model problem, again based on \eqref{weak-problem-new}, we investigate how the GF$a_z$ and GF$L^2$ methods perform when using a more complicated potential $V$ which consists of a harmonic part and an additional optical lattice. The potential is visualized in Figure \ref{potential_and_func} (left) and reads
\begin{align}
\label{pot-mod-prob-2}
V(x)=\frac{|x|^2}{2} + 20 + 20 \sin(2 \pi x_1) \sin(2 \pi x_2 ).
\end{align}
Furthermore, we use again $\Omega=(-6,6)^2$ and $\beta=1000$ and fix the mesh size $h=12\cdot 2^{-8}$. To compute the ground state of the corresponding energy functional, we start the different iterations with a Thomas-Fermi density that was computed according to  \cite{Bao14} (for the case of general potentials, which includes the lattice part in our case). The final ground state density is depicted in Figure \ref{potential_and_func} (right), where we identified the ground state energy with approximately $E_{\mbox{\rm\tiny GS}}=15.204825$ and the corresponding ground state eigenvalue with $\lambda_{\mbox{\rm\tiny GS}} = 36.708$.

\begin{figure}[h!]
\centering
\includegraphics[scale=0.29]{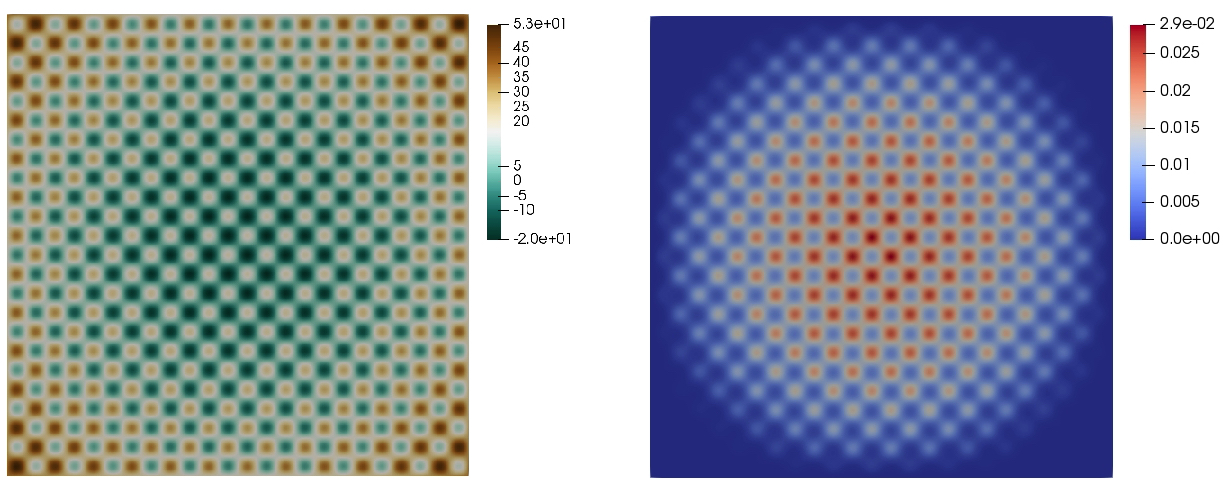}
\caption{\it Model Problem 2. Left: Visualization of the potential $V$ defined in \eqref{pot-mod-prob-2}. Right: Computed ground state density $|z^{\ast}|^2$.}
\label{potential_and_func}
\end{figure}

\begin{table}[h!]
\caption{\it Model problem 2. The table shows various step sizes $\tau$ and the corresponding number of GF$a_{z}$ and  GF$L^2$ iterations that are required to fall below the energy tolerance.}
\label{table-iterations-5}
\begin{center}
a)  GF$a_{z}\quad$ \begin{tabular}{|c||c|c|c|c|c|c|c|c|c|c|}
\hline 
$\tau$ & 0.8 & 0.9 & 1.0 & 1.1 & 1.2 & 1.3 & 1.4 & \bf 1.5 & \bf 1.6 &  1.7 \\
\hline
$N$ & 23 & 20 & 18 & 17 & 15 & 14 & 13 & \bf12 & \bf 12 & $\infty$ \\
\hline
\end{tabular}\\[1ex]
b) GF$L^2\quad$ \begin{tabular}{|c||c|c|c|c|c|c|c|c|c|c|c|}
	\hline 
	$\tau$ & 0.1 & 0.5 & 1 & 1.5 & 2 & 2.5 & 3 & \bf 5 & \bf 10 & \bf 100 & \bf 1000 \\
	\hline
	$N$ & 32 & 27 & 27 & 27 & 27 & 27 & 27 & \bf 26  & \bf 26 & \bf 26 & \bf 26 \\
	\hline
\end{tabular}
\end{center}
\end{table}

In Table \ref{table-iterations-5}(a) we see how the number of GF$a_{z}$ iterations vary depending on the selected step size $\tau$. The method is unstable for $\tau\leq 1.7$. For smaller time steps, the number of iterations decreases uniformly from 23 iterations for $\tau=0.8$ to $12$ iterations for $\tau=1.5$. Even though not contained in the table, the number of iterations for GF$a_{z}$ increases dramatically for $\tau\le 0.7$ and the method is no longer competitive in this regime. In practice we always recommend the usage of  adaptivity (cf. Remark \ref{adaptive-GFaz}) to find a good value for $\tau$. In this case only $11$ iterations were needed to achieve the error tolerance. The GF$L^2$ is less sensitive to the choice of time step. However, the minimal number of time steps $26$ is considerably higher to what is achieved by GF$a_{z}$. With the right choice of the step size, GF$a_{z}$ performs up to twice as fast. Using adaptivity the appropriate time step regime is easily reached.

Our general conclusion is that for simple test problems the GF$L^2$ and GF$a_{z}$ perform basically evenly. On the other hand, the GF$a_{z}$ can have visible advantages for more challenging test cases involving poor choices for the starting value $z^0$ or more complicated potentials.

\begin{remark}[Negative potentials, shift and invert]
Shifting the potential $V$ by $-20$ leads to a negative potential but does not affect the eigenfunctions. All energies and corresponding eigenvalues are simply shifted by $-20$ as well. The ground state energy level then reads $E_{\mbox{\rm\tiny GS}}=5.204825$ and the corresponding eigenvalue  $\lambda_{\mbox{\rm\tiny GS}} = 16.708$. Still, the negative potential causes problems for numerical simulation. We observed strong energy oscillations for the GF$L^2$ if the step size was not selected sufficiently small ($\tau<0.7$ in our tests). Such oscillations cannot happen if $V\ge 0$. Therefore it is reasonable to first shift $V$ so that it becomes positive, apply the methods to compute e.g. the ground state and afterwards shift the energy and the eigenvalue back to the original setup. This is equivalent to using a suitable shift parameter in a conventional inverse iteration method. 

As with linear eigenvalue problems, such a shift may as well be used to speed up convergence by increasing the relative sizes of spectral gaps. 
\end{remark}

\subsection{Model Problem 3 - Anderson Localization}
Our final numerical experiments is devoted to the phenomenon of {\it Anderson localization} \cite{And58}, which describes the exponential localization of waves in a disordered medium. In the context of the Gross-Pitaevskii eigenvalue problem this Anderson effect is reflected by strongly localized peaks in the ground state eigenfunction, provided that the potential $V$ is sufficiently disordered. 

\begin{table}[h!]
\caption{\it Model problem 3. The table shows various step sizes $\tau$ and the corresponding required number of GF$a_{z}$ and GF$L^2$ iterations to fall below the energy tolerance.}
\label{table-iterations-8}
\begin{center}
a)  GF$a_{z}\quad$  \begin{tabular}{|c||c|c|c|c|c|c|c|c|c|c|c|}
\hline 
$\tau$ 
& 1.0 & 1.1 & 1.2 & 1.3 & 1.4 & 1.5 & 1.6 & 1.7 & \bf 1.8 & \bf 1.9 & 2.0 \\ 
\hline
$N$ 
& 100 & 91 & 84 & 77 & 72 & 67 & 63 & 59 & \bf 56 & \bf 56 & $\infty$ \\
\hline
\end{tabular}\\[1ex]
b) GF$L^2\quad$ \begin{tabular}{|c||c|c|c|c|c|c|c|c|c|c|c|}
	\hline 
	$\tau$ 
	& 0.5 & 1 & 1.5 & 2 & 2.5 & 3 & 5 & 10 & \bf 100 & \bf 1000 \\
	\hline
	$N$ 
	& 80 & 76 & 74 & 74 & 73 & 73 & 72  & 72 & \bf 71 & \bf 71 \\
	\hline
\end{tabular}\end{center}
\end{table}
\begin{figure}[h!]
\centering
\includegraphics[scale=0.3]{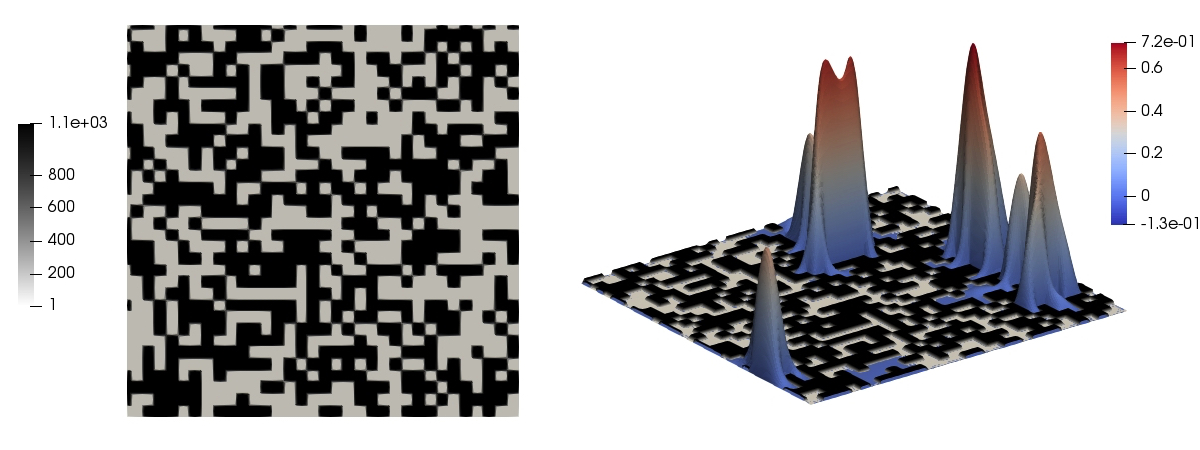}
\caption{\it Model Problem 3. Left image: Visualization of the random potential $V$, which varies between the values $1$ in the white regions and $\eps^{-2}\approx1000$ in the black regions. Right image: The Anderson-localized ground state $z_{\mbox{\rm\tiny GS}}$ consisting of several exponentially localized peaks.}
\label{anderson-localization}
\end{figure}

We consider \eqref{weak-problem-new} and let $\Omega=(-6,6)^2$ and $\beta=10$. The potential $V$ is a random disorder potential that is obtained by dividing $\Omega$ into $400 \times 400$ square cells with edge length $\eps=0.03$. In each cell independently, the potential takes either the value $V(x)=1$ or $V(x)=\eps^{-2}$ with equal probability. The scaling is selected according to the theoretical findings in \cite{AHP18}. The particular (deterministic) realization of $V$ used in our experiment is depicted in Figure \ref{anderson-localization}, together with the corresponding ground state $z_{\mbox{\rm\tiny GS}}$. We can clearly see the expected Anderson localization, as $z_{\mbox{\rm\tiny GS}}$ consists of few exponentially fast decaying peaks and is essentially zero elsewhere. With a highly accurate reference computation we obtained the ground state energy with $E_{\mbox{\rm\tiny GS}}=4.84223025$ and the ground state eigenvalue with $\lambda_{\mbox{\rm\tiny GS}}=10.826242$. The uniform mesh in our computations has the mesh size $h=12\cdot 2^{-8}$ which is fine enough to resolve the variations of the potential. The initial value was again selected as a suitable Thomas-Fermi approximation.

In Table \ref{table-iterations-8} we can see the number of iterations for GF$a_{z}$  and 
 GF$L^2$.  Again, we observe a similar performance of both methods, where GF$a_{z}$ shows stronger variations in the number of iterations. However, comparing the peak performance of the approaches, we see that GF$L^2$ is around 27\% slower than GF$a_{z}$. It is interesting to note that we observed convergence of GF$a_{z}$ until very close to the theoretical upper limit of $\tau=2$. Combining GF$a_{z}$ with an adaptive step size control as described at the beginning of this section, the number of iterations dropped even further from $56$ to $52$. In general we can conclude that both GF$L^2$ and GF$a_{z}$ are well-suited for an efficient computation of Anderson localized ground states, where the GF$a_{z}$ with adaptivity shows clearly the best performance.

\medskip
$\\$
{\bf Acknowledgements.}
The authors thank Robert Altmann for the fruitful discussions and valuable comments on some of the proofs. Furthermore, we thank the anonymous reviewers for their very insightful comments that greatly improved the contents of this paper.

\def\cprime{$'$}


\begin{thebibliography}{10}

\bibitem{ARV01}
J.~Abo-Shaeer, C.~Raman, J.~Vogels, and W.~Ketterle.
\newblock Observation of vortex lattices in {B}ose-{E}instein condensates.
\newblock {\em Science}, 292(5516):476--479, 2001.

\bibitem{Aft06}
A.~Aftalion.
\newblock {\em Vortices in {B}ose-{E}instein condensates}.
\newblock Progress in Nonlinear Differential Equations and their Applications,
  67. Birkh\"auser Boston, Inc., Boston, MA, 2006.

\bibitem{AfD01}
A.~Aftalion and Q.~Du.
\newblock Vortices in a rotating {B}ose-{E}instein condensate: {C}ritical
  angular velocities and energy diagrams in the {T}homas-{F}ermi regime.
\newblock {\em Physical Review A}, 64(6), 2001.

\bibitem{Alaeian_2017}
H.~Alaeian, M.~Schedensack, C.~Bartels, D.~Peterseim, and M.~Weitz.
\newblock Thermo-optical interactions in a dye-microcavity photon
  bose{\textendash}einstein condensate.
\newblock {\em New J. Phys.}, 19(11):115009, 2017.

\bibitem{AHP19}
R.~Altmann, P.~Henning, and D.~Peterseim.
\newblock The {J}--{M}ethod for the {G}ross--{P}itaevskii {E}igenvalue
  {P}roblem.
\newblock ArXiv e-print 1908.00333, 2019.

\bibitem{AHP18}
R.~Altmann, P.~Henning, and D.~Peterseim.
\newblock Quantitative {A}nderson localization of {S}chr\"odinger eigenstates
  under disorder potentials.
\newblock {\em Math. Models Methods Appl. Sci.}, 2020.   \url{https://doi.org/10.1142/S0218202520500190}.

\bibitem{And58}
P.~W. Anderson.
\newblock Absence of diffusion in certain random lattices.
\newblock {\em Phys. Rev.}, 109:1492--1505, Mar 1958.

\bibitem{ALT17}
X.~Antoine, A.~Levitt, and Q.~Tang.
\newblock Efficient spectral computation of the stationary states of rotating
  {B}ose-{E}instein condensates by preconditioned nonlinear conjugate gradient
  methods.
\newblock {\em J. Comput. Phys.}, 343:92--109, 2017.

\bibitem{Bao14}
W.~Bao.
\newblock Mathematical models and numerical methods for {B}ose-{E}instein
  condensation.
\newblock in {\em Proceedings of the {I}nternational {C}ongress of {M}athematicians---{S}eoul 2014}. {V}ol. {IV}, Kyung Moon Sa, Seoul, pp.~971--996, 2014.

\bibitem{BaC13b}
W.~Bao and Y.~Cai.
\newblock Mathematical theory and numerical methods for {B}ose-{E}instein
  condensation.
\newblock {\em Kinet. Relat. Models}, 6(1):1--135, 2013.

\bibitem{BCL06}
W.~Bao, I.-L. Chern, and F.~Y. Lim.
\newblock Efficient and spectrally accurate numerical methods for computing
  ground and first excited states in {B}ose-{E}instein condensates.
\newblock {\em J. Comput. Phys.}, 219(2):836--854, 2006.

\bibitem{BaD04}
W.~Bao and Q.~Du.
\newblock Computing the ground state solution of {B}ose-{E}instein condensates
  by a normalized gradient flow.
\newblock {\em SIAM J. Sci. Comput.}, 25(5):1674--1697, 2004.

\bibitem{BJM03}
W.~Bao, D.~Jaksch, and P.~A. Markowich.
\newblock Numerical solution of the {G}ross-{P}itaevskii equation for
  {B}ose-{E}instein condensation.
\newblock {\em J. Comput. Phys.}, 187(1):318--342, 2003.

\bibitem{BaS08}
W.~Bao and J.~Shen.
\newblock A generalized-{L}aguerre-{H}ermite pseudospectral method for
  computing symmetric and central vortex states in {B}ose-{E}instein
  condensates.
\newblock {\em J. Comput. Phys.}, 227(23):9778--9793, 2008.

\bibitem{BaT03}
W.~Bao and W.~Tang.
\newblock Ground-state solution of {B}ose-{E}instein condensate by directly
  minimizing the energy functional.
\newblock {\em J. Comput. Phys.}, 187(1):230--254, 2003.

\bibitem{Bos24}
S.~Bose.
\newblock Plancks {G}esetz und {L}ichtquantenhypothese.
\newblock {\em Zeitschrift f\"ur Physik}, 26(1):178--181, 1924.

\bibitem{BrF14}
L.~Brasco and G.~Franzina.
\newblock Convexity properties of {D}irichlet integrals and {P}icone-type
  inequalities.
\newblock {\em Kodai Math. J.}, 37(3):769--799, 2014.

\bibitem{BrS19}
C.~Brennecke and B.~Schlein.
\newblock Gross-{P}itaevskii dynamics for {B}ose-{E}instein condensates.
\newblock {\em Anal. PDE}, 12(6):1513--1596, 2019.

\bibitem{COR09}
M.~Caliari, A.~Ostermann, S.~Rainer, and M.~Thalhammer.
\newblock A minimisation approach for computing the ground state of
  {G}ross-{P}itaevskii systems.
\newblock {\em J. Comput. Phys.}, 228(2):349--360, 2009.

\bibitem{CCH18}
E.~Canc\`es, R.~Chakir, L.~He, and Y.~Maday.
\newblock Two-grid methods for a class of nonlinear elliptic eigenvalue
  problems.
\newblock {\em IMA J. Numer. Anal.}, 38(2):605--645, 2018.

\bibitem{CCM10}
E.~Canc{\`e}s, R.~Chakir, and Y.~Maday.
\newblock Numerical analysis of nonlinear eigenvalue problems.
\newblock {\em J. Sci. Comput.}, 45(1-3):90--117, 2010.

\bibitem{CaL00}
E.~Canc{\`e}s and C.~Le~Bris.
\newblock Can we outperform the {DIIS} approach for electronic structure
  calculations?
\newblock {\em International Journal of Quantum Chemistry}, 79(2):82--90, 2000.

\bibitem{CGZ10}
H.~Chen, X.~Gong, and A.~Zhou.
\newblock Numerical approximations of a nonlinear eigenvalue problem and
  applications to a density functional model.
\newblock {\em Math. Methods Appl. Sci.}, 33(14):1723--1742, 2010.

\bibitem{CHJ08}
C.-S. Chien, H.-T. Huang, B.-W. Jeng, and Z.-C. Li.
\newblock Two-grid discretization schemes for nonlinear {S}chr\"odinger
  equations.
\newblock {\em J. Comput. Appl. Math.}, 214(2):549--571, 2008.

\bibitem{DGP99}
F.~Dalfovo, S.~Giorgini, L.~Pitaevskii, and S.~Stringari.
\newblock Theory of {B}ose-{E}instein condensation in trapped gases.
\newblock {\em Reviews of Modern Physics}, 71(3):463--512, 1999.

\bibitem{DaK10}
I.~Danaila and P.~Kazemi.
\newblock A new {S}obolev gradient method for direct minimization of the
  {G}ross-{P}itaevskii energy with rotation.
\newblock {\em SIAM J. Sci. Comput.}, 32(5):2447--2467, 2010.

\bibitem{DaP17}
I.~Danaila and B.~Protas.
\newblock Computation of ground states of the {G}ross-{P}itaevskii functional
  via {R}iemannian optimization.
\newblock {\em SIAM J. Sci. Comput.}, 39(6):B1102--B1129, 2017.

\bibitem{DiC07}
C.~M. Dion and E.~Canc{\`e}s.
\newblock Ground state of the time-independent {G}ross-{P}itaevskii equation.
\newblock {\em Comput. Phys. Comm.}, 177(10):787--798, 2007.

\bibitem{Ein24}
A.~Einstein.
\newblock {\em Quantentheorie des einatomigen idealen {G}ases}, pages 261--267.
\newblock Sitzber. Kgl. Preuss. Akad. Wiss., 1924.

\bibitem{Eva10}
L.~C. Evans.
\newblock {\em Partial differential equations}, volume~19 of {\em Graduate
  Studies in Mathematics}.
\newblock American Mathematical Society, Providence, RI, second edition, 2010.

\bibitem{FaT18}
E.~Faou and T.~J\'{e}z\'{e}quel.
\newblock Convergence of a normalized gradient algorithm for computing ground
  states.
\newblock {\em IMA J. Numer. Anal.}, 38(1):360--376, 2018.

\bibitem{Fet09}
A.~L. Fetter.
\newblock Rotating trapped {B}ose-{E}instein condensates.
\newblock {\em Rev. Mod. Phys.}, 81:647--691, 2009.

\bibitem{GaP01}
J.~J. Garc{\'{\i}}a-Ripoll and V.~M. P{\'e}rez-Garc{\'{\i}}a.
\newblock Optimizing {S}chr\"odinger functionals using {S}obolev gradients:
  applications to quantum mechanics and nonlinear optics.
\newblock {\em SIAM J. Sci. Comput.}, 23(4):1316--1334 (electronic), 2001.

\bibitem{HSW19}
P.~Heid, B.~Stamm, and T.~P. Wihler.
\newblock Gradient flow finite element discretizations with energy-based
  adaptivity for the {G}ross-{P}itaevskii equation.
\newblock ArXiv e-print 1906.06954, 2019.

\bibitem{HMP14b}
P.~Henning, A.~M{\aa}lqvist, and D.~Peterseim.
\newblock Two-{L}evel {D}iscretization {T}echniques for {G}round {S}tate
  {C}omputations of {B}ose-{E}instein {C}ondensates.
\newblock {\em SIAM J. Numer. Anal.}, 52(4):1525--1550, 2014.

\bibitem{JKM14}
E.~Jarlebring, S.~Kvaal, and W.~Michiels.
\newblock An inverse iteration method for eigenvalue problems with eigenvector
  nonlinearities.
\newblock {\em SIAM J. Sci. Comput.}, 36(4):A1978--A2001, 2014.

\bibitem{KaE10}
P.~Kazemi and M.~Eckart.
\newblock Minimizing the {G}ross-{P}itaevskii energy functional with the
  {S}obolev gradient---analytical and numerical results.
\newblock {\em Int. J. Comput. Methods}, 7(3):453--475, 2010.

\bibitem{Leg03}
A.~J. Leggett.
\newblock Nonlocal hidden-variable theories and quantum mechanics: an
  incompatibility theorem.
\newblock {\em Found. Phys.}, 33(10):1469--1493, 2003.
\newblock Special issue dedicated to David Mermin, Part I.

\bibitem{LSY01}
E.~H. Lieb, R.~Seiringer, and J.~Yngvason.
\newblock A rigorous derivation of the {G}ross-{P}itaevskii energy functional
  for a two-dimensional {B}ose gas.
\newblock {\em Comm. Math. Phys.}, 224(1):17--31, 2001.
\newblock Dedicated to Joel L. Lebowitz.

\bibitem{MAH99}
M.~Matthews, B.~Anderson, P.~Haljan, D.~Hall, C.~Wieman, and E.~Cornell.
\newblock Vortices in a {B}ose-{E}instein condensate.
\newblock {\em Physical Review Letters}, 83(13):2498--2501, 1999.

\bibitem{Pic1910}
M.~Picone.
\newblock Sui valori eccezionali di un parametro da cui dipende un'equazione
  differenziale lineare del secondo ordine.
\newblock {\em Ann. Scuola Norm. Sup. Pisa}, 11:1--144, 1910.

\bibitem{PiS03}
L.~P. Pitaevskii and S.~Stringari.
\newblock {\em {B}ose-{E}instein Condensation}.
\newblock Oxford University Press, Oxford, 2003.

\bibitem{RSB14}
N.~Raza, S.~Sial, and A.~R. Butt.
\newblock Numerical approximation of time evolution related to
  {G}inzburg-{L}andau functionals using weighted {S}obolev gradients.
\newblock {\em Comput. Math. Appl.}, 67(1):210--216, 2014.

\bibitem{RSS09}
N.~Raza, S.~Sial, S.~S. Siddiqi, and T.~Lookman.
\newblock Energy minimization related to the nonlinear {S}chr\"odinger
  equation.
\newblock {\em J. Comput. Phys.}, 228(7):2572--2577, 2009.

\bibitem{XiX16}
H.~Xie and M.~Xie.
\newblock A multigrid method for ground state solution of {B}ose-{E}instein
  condensates.
\newblock {\em Commun. Comput. Phys.}, 19(3):648--662, 2016.

\bibitem{Zei85}
E.~Zeidler.
\newblock {\em Nonlinear functional analysis and its applications. {III}}.
\newblock Springer-Verlag, New York, 1985.
\newblock Variational methods and optimization, Translated from the German by
  Leo F. Boron.

\end{thebibliography}
\end{document}